\documentclass{amsart}

\usepackage{amsmath, amsthm, amssymb, mathrsfs, mathtools}
\usepackage{etoolbox}
\apptocmd{\sloppy}{\hbadness 10000\relax}{}{}
\apptocmd{\sloppy}{\vbadness 10000\relax}{}{}
\usepackage{enumerate, enumitem, hyperref, xcolor}
\usepackage{esint}
\usepackage{comment}
\usepackage{braket}
\numberwithin{equation}{section}

\theoremstyle{plain}
\newtheorem{theorem}{Theorem}[section]
\newtheorem{lemma}[theorem]{Lemma}
\newtheorem{corollary}[theorem]{Corollary}
\newtheorem{proposition}[theorem]{Proposition}

\theoremstyle{definition}
\newtheorem{definition}[theorem]{Definition}
\newtheorem{remark}[theorem]{Remark}
\newtheorem{example}[theorem]{Example}

\numberwithin{equation}{section}

\newcommand{\scR}{\mathscr{R}}
\newcommand{\scL}{\mathscr{L}}

\newcommand{\scB}{\mathscr{B}}
\newcommand{\scQ}{\mathscr{Q}}

\newcommand{\D}{\mathrm{D}}

\newcommand{\cW}{\mathcal{W}}
\newcommand{\cH}{\Haus}

\newcommand{\R}{\mathbb{R}}

\newcommand{\N}{\mathbb{N}}
\newcommand{\X}{\mathbb{X}}

\newcommand{\bz}{\mathbf{0}}
\newcommand{\Haus}{\mathcal{H}}
\DeclareMathOperator{\appTanAW}{AppTan_{AW}}
\DeclareMathOperator{\appTanGH}{AppTan_{GH}}
\DeclareMathOperator{\Tan}{\mathrm{Tan}}
\DeclareMathOperator{\TanAW}{Tan_{AW}}
\DeclareMathOperator{\TanGH}{Tan_{GH}}

\newcommand{\AW}{\mathrm{AW}}

\DeclareMathOperator{\diam}{\mathop\mathrm{diam}}
\DeclareMathOperator{\dist}{\mathop\mathrm{dist}}
\newcommand{\GH}{{\mathrm{GH}}}
\newcommand{\pGH}{{\mathrm{pGH}}}
\newcommand{\ex}{\mathop\mathrm{ex}}

\newcommand{\sD}{\mathrm{D}}

\newcommand{\pcap}{\mathrm{cap}}

\newcommand{\td}[1]{\tilde{#1}}

\title{Tangents to Lipschitz and Sobolev images}
\author{Matthew Badger}
\author{Jared Krandel}
\author{Vyron Vellis}
\thanks{M.B.~was partially supported by NSF grant 2154047. J.K.~was partially supported by NSF grants 1763973 and 1928930 (the latter while in residence at the Simons Laufer Mathematical Sciences Institute during Fall 2024). V.V.~was partially supported by NSF grant 2154918.}
\date{January 29, 2026}
\subjclass[2020]{Primary 28A75; Secondary 46E36, 53C23}
\keywords{rectifiable sets, tangents, Sobolev maps, Lipschitz maps}
\address{Department of Mathematics\\ University of Connecticut\\ Storrs, CT 06269-1009}
\email{matthew.badger@uconn.edu}
\address{Department of Mathematics, University of California, Davis, Davis, CA 95616}
\email{jkrandel@ucdavis.edu}
\address{Department of Mathematics\\ The University of Tennessee\\ Knoxville, TN 37966}
\email{vvellis@utk.edu}

\begin{document}
	
	\begin{abstract} We develop geometric versions of Rademacher and Calderon type differentiability theorems in two categories. A special case of our results is that for any Lipschitz or continuous $W^{1,p}$ Sobolev map $f$ from $[0,1]^n$ into a Euclidean space with $p>n$, the image $f([0,1]^n)$ has a unique tangent set (Attouch-Wets convergence) at almost every point with respect to the $n$-dimensional Hausdorff measure. In the analogous case when $f$ is a continuous $N^{1,p}$ map from $[0,1]^n$ into a metric space, we show that the image $f([0,1]^n)$ has a unique metric tangent (Gromov-Hausdorff convergence) almost everywhere. These results complement, but are distinct from Federer's theorem on existence and uniqueness of approximate tangents of $n$-rectifiable sets in $\R^d$. We show that approximate tangents to Sobolev images can be upgraded to Attouch-Wets or Gromov-Hausdorff tangents by first proving that the \emph{$n$-packing content} of Sobolev images is finite, then proving that the inability to upgrade on a set of positive measure implies infinite packing content.
	\end{abstract}
	
	\maketitle
	
	\section{Introduction}
	Rademacher's theorem, an important result in geometric measure theory, states that if $\Omega\subset\R^n$ is open and $f: \Omega\to\R^d$ is Lipschitz, then $f$ is differentiable at almost every point of $\Omega$. Applications and generalizations of Rademacher's theorem are in several facets of analysis. First, Stepanov's theorem states that if a map $f:\Omega\to\R^d$ on an open set $\Omega\subset\R^n$ is differentiable at almost every point $a\in \Omega$ at which $\limsup_{x\to a} |f(x)-f(a)|/|x-a|<\infty$; see \cite[Theorem 3.1.9]{Federer}.  Second, Aleksandrov's theorem states that a convex function $f:\R^n \to \R$ is almost everywhere twice differentiable \cite[Theorem 3.11.2]{convex-book}. Third, Pansu's theorem \cite{Pansu} establishes a Rademacher-type theorem where the domain of the Lipschitz map is an open set of a Carnot group. Fourth, Kirchheim \cite{Ki94} proved a Rademacher-type theorem where the co-domain of $f$ is a metric space $X$; see Theorem \ref{t:kirch}. Finally, applications of Rademacher's theorem can be found in analysis on metric spaces (bi-Lipschitz non-embeddability of the Heisenberg group \cite{Semmes}), analysis of PDEs (solutions to certain nonlinear wave equation in one spatial dimension decay to zero \cite{TaoLindblad}), and geometric measure theory (the Besicovitch projection theorem in the plane and the general theory of rectifiability \cite{Bes39}). A far-reaching extension of Rademacher's theorem to metric measure spaces has been obtained by Cheeger \cite{Cheeger}.
	
	Calderon \cite{Calderon}, using the Sobolev embedding theorem, extended Rademacher's theorem by proving that if $\Omega \subset \R^n$ is an open set, then every continuous function in the Sobolev class $W^{1,p}(\Omega,\R^d)$ is almost everywhere differentiable when $p\in (n,\infty]$. Recall that a function $f:\Omega\to\R^d$ is in $W^{1,p}(\Omega,\R^d)$ for some $p\geq 1$ if $f\in L^p(\Omega)$ and $f$ has distributional partial derivatives in $L^p(\Omega)$. Recall that a continuous function $f: \Omega \to \R^d$ is Lipschitz if and only if $f\in W^{1,\infty}(\Omega,\R^d)$ \cite[Theorem 6.12]{Heinonen01}.
	
	The goal of this paper is to establish geometric versions of Rademacher's theorem and Calderon's theorem, replacing functions by their images and derivatives by tangent spaces that describe the infinitesimal structure of the image of the map. Let us first define a notion of tangent. Following \cite{BL15}, given a closed set $X\subset \R^n$ and a point $x\in X$, we say that a closed set $T$ is a \emph{tangent of $X$ at $x$} if there exists a sequence of positive scales $r_j$ that go to zero such that the blow-up sets $r_j^{-1}(X-x)$ converge to the set $T$ in the Attouch-Wets topology. We denote by $\TanAW(X,x)$ the collection of all tangents of $X$ at $x$. The space $\TanAW(X,x)$ is nonempty and if $T$ is in $\TanAW(X,x)$, then $\lambda T$ is in $\TanAW(X,x)$ for every $\lambda>0$. See \textsection \ref{sec:euc_tang} for the precise definition and further discussion.
	
	The third named author and Shaw showed that if $f:[0,1] \to \R^n$ is Lipschitz, then for $\Haus^1$-almost every point $x \in f([0,1])$, the tangent space $\TanAW(f([0,1]),x)$ contains exactly one element and this element is a straight line \cite[Theorem 1.1]{ShawV}. The first theorem of this paper extends this result to all dimensions as well as to Sobolev maps.
	
	\begin{theorem}\label{thm:main-euc}
		Let $n\geq 1$ and $p\in (n,\infty]$. For every continuous map $f \in W^{1,p}(\mathbb{B}^n,\R^d)$, the image $f(\mathbb{B}^n)$ has finite $\Haus^n$ measure, and at $\Haus^n$-a.e. $x \in f(\mathbb{B}^n)$ there exists an $n$-plane $V_x\subset\R^d$ such that $\TanAW(f(\mathbb{B}^n),x) =\{V_x\}$.
	\end{theorem}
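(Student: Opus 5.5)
The plan follows the strategy in the abstract: first get approximate tangents from rectifiability, then upgrade them to Attouch-Wets tangents using a packing-content bound.

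\emph{Step 1: rectifiability and approximate tangents.} For $p>n$, a continuous $f\in W^{1,p}(\mathbb{B}^n,\R^d)$ satisfies Lusin's condition (N) and is differentiable a.e. by Calderon's theorem. Using the Morrey--Sobolev oscillation estimate
\[
\diam f(Q)\lesssim \ell(Q)^{1-n/p}\Big(\int_Q |Df|^p\Big)^{1/p}
\]
on cubes $Q$, together with H\"older's inequality over a dyadic covering, I get $\Haus^n(f(\mathbb{B}^n))\lesssim \int |Df|^n<\infty$. The same differentiability shows that $f(\mathbb{B}^n)$ is countably $n$-rectifiable: decompose $\mathbb{B}^n$ into countably many pieces on which $f$ is Lipschitz, plus a null set whose image is $\Haus^n$-null by (N). Federer's theorem then gives, at $\Haus^n$-a.e.\ $x$, a unique approximate tangent $n$-plane $V_x$. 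Approximate tangency controls the measure but not the set, so it does not yet give $\TanAW=\{V_x\}$.

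\emph{Step 2: finite packing content.} I will define an $n$-packing content, namely the supremum of $\sum_i r_i^n$ over disjoint families of balls $B(x_i,r_i)$ centered on $f(\mathbb{B}^n)$. I will show it is finite, ideally localized as a bound $\lesssim \int_{f^{-1}(U)}|Df|^n$ plus a lower-order term. The idea is to pull each ball back: for $x_i=f(a_i)$, the component of $f^{-1}(B(x_i,r_i))$ containing $a_i$ must reach a point where $f$ has moved distance about $r_i$. By the Morrey estimate this forces $\int_{Q_i}|Df|^p\gtrsim r_i^p\,\ell(Q_i)^{n-p}$ on a suitable cube $Q_i$. The difficulty is that the preimages of disjoint image balls need not be disjoint, since $f$ may be non-injective. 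I would therefore count multiplicity via a Vitali/dyadic selection and accept a bound in terms of $\int|Df|^n$ together with $\Haus^n$-density considerations.

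\emph{Step 3: upgrading (the main obstacle).} Suppose that on a set $E$ with $\Haus^n(E)>0$ the approximate tangent $V_x$ is not the unique AW tangent. By compactness of the AW topology, and because every AW tangent contains the approximate tangent plane (density forces points near every point of $V_x$), failure means there are $\epsilon>0$ and scales $r\to0$ with a point $y\in f(\mathbb{B}^n)\cap B(x,r)$ satisfying $\dist(y,x+V_x)\ge \epsilon r$. The key geometric claim is that $f(\mathbb{B}^n)$ is connected and $n$-dimensional near such an exceptional point: any continuum joining $y$ back to the near-flat part carries packing content $\gtrsim(\epsilon r)^n$ in a region where the measure is small. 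This is because, by approximate flatness, most of the $\Haus^n$ mass of $B(x,r)$ lies near $x+V_x$. Concretely, I would place disjoint balls of radius about $\epsilon r$ along the excursion, centered at images of a small domain piece whose $\Haus^n$-content is negligible. Iterating over infinitely many disjoint scales and points of $E$ through a Vitali-type stopping time then produces disjoint families whose sums of $r_i^n$ diverge. This contradicts Step 2, so $\TanAW(f(\mathbb{B}^n),x)=\{V_x\}$ a.e.

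The delicate part is making the packing balls in Step 3 genuinely extra. Their content must not already be accounted for by the measure near $V_x$, and the bound from Step 2 must localize well enough to make the resulting sum infinite rather than merely large.
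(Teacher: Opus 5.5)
Your outline (rectifiability and approximate tangents, then finite $n$-packing content, then ``failure of the upgrade forces infinite packing content'') matches the paper's architecture. However, both substantive steps are left open, and the routes you indicate would not close them.

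\emph{Step 2.} The obstacle you name is backwards. Preimages of disjoint sets are disjoint for every map, injective or not; only images can overlap. The real defects of a Morrey estimate on cubes are different:
\begin{enumerate}
\item $Q_i$ is not contained in $f^{-1}(B(x_i,r_i))$, so the cubes overlap even though the preimages do not.
\item $\ell(Q_i)$ is dictated by the extent of the preimage component, which can be long and thin. Then $r_i^p\ell(Q_i)^{n-p}$ can be far smaller than $r_i^n$.
\end{enumerate}
Your fallback, a bound through $\int|Df|^n$, cannot work. The graph in Example~\ref{ex:psharp} has $\int|\nabla f|^n<\infty$, but it has spikes of height $\ell(Q)$ at every generation over a fat Cantor set, hence infinite $n$-packing content. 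Its truncations give Lipschitz maps with bounded $n$-energy and arbitrarily large packing content. So $p>n$ must enter quantitatively.

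The missing idea is to run the Morrey/capacity estimate on the preimage itself. Let $y=f(a)$ and let $\Omega_1=f^{-1}(U(y,R))$ be compactly contained in the domain. Then $h=(1-2|f(\cdot)-y|/R)_+\in W^{1,p}_0(\Omega_1)$, with $h(a)=1$ and $|\nabla h|\le 2|Df|/R$. The bound $\pcap_p(\{a\},\Omega_1)\gtrsim\scL^n(\Omega_1)^{-(p-n)/n}$, valid since $p>n$, yields $\int_{\Omega_1}|Df|^p\gtrsim R^n$ whenever $\scL^n(\Omega_1)<R^n$. Hence $\nu=f_*((1+|Df|^p)\,dx)$ is finite and lower $n$-regular on the image. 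To handle preimages reaching $\partial\mathbb{B}^n$, first extend $f$ to $B^n(\bz,2)$, constant near the boundary, and add a Dirac mass at that constant. Disjointness of preimages then gives $\sum_i r_i^n\lesssim\nu(\R^d)$.

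\emph{Step 3.} Connectedness, continua, and the amount of measure near the excursion are all irrelevant. A single point $y$ of the image with $\dist(y,x+V_x)\ge\epsilon r$ already centers a ball of radius comparable to $\epsilon r$, and packing content only counts radii. What must be built is what you defer as ``the delicate part'': a family of such balls, pairwise disjoint across infinitely many scales, with divergent $\sum r_i^n$. This needs only the global bound of Step 2, not a localized one.

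The paper does this with expansive sets (Proposition~\ref{p:expansive}):
\begin{enumerate}
\item At a.e.\ bad point, along a sequence of scales, take a snapshot $F_{x,r}\subset F\cap B(x,r)$ of nearly full density. Take also a point $y$ of the image in $B(x,r)$ that is far from the snapshot, not merely from the plane; this is Lemma~\ref{l:mt-ex-lb}.
\item Finite packing content forces positive lower density, so $\Haus^n$ is asymptotically doubling on the bad set $F$. Vitali then selects finitely many disjoint such balls carrying most of $\Haus^n(F)$.
\item Recurse inside each snapshot with radii $<\epsilon r$.
\end{enumerate}
The far balls of all generations are pairwise disjoint, because each avoids a neighborhood of its snapshot, where all its descendants live. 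Each generation contributes $\gtrsim\epsilon^n\Haus^n(F)$, so the total diverges.
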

	
	In our second result, we develop a version of Theorem \ref{thm:main-euc} for metric targets. To state it, we first need to define tangents and Sobolev maps in metric spaces.
	
	There have been several approaches to generalizing classical Sobolev spaces to metric measure spaces, including but not limited to Haj{\l}asz \cite{Haj96}, Cheeger \cite{Cheeger}, and Shanmugalingam \cite{Sha00}. Here we are concerned with the latter notion. Given $p \in [1,\infty]$, a domain $\Omega\subset\R^n$, and a metric space $X$, we say that $f :\Omega \to X$ is in the \emph{Newtonian-Sobolev class} $N^{1,p}(\Omega,X)$ if $f\in L^p(\Omega,X)$ and if there exists a Borel function $\rho \in L^p(\Omega)$ such that $\rho\geq 0$ and for $p$-almost every rectifiable curve $\gamma : [0,1] \to \Omega$,
	\[ d_X(f(\gamma(1)), f(\gamma(0))) \leq \int_{\gamma}\rho\, ds.\]
	See Definition \ref{def:metricSobolev} for precise statement.
	
	Tangents can be defined in any complete pointed metric space $(X,d_X,x)$; the primary difference is that convergence of pointed metric spaces $(X,r_j^{-1}d_X,x)$ is done in the Gromov-Hausdorff sense (see Definition \ref{def:GHdist}). The associated tangent space is denoted by $\TanGH(X,x)$. Assuming that $E\subset \R^n$ and $x\in E$, a key difference between spaces $\TanGH(E,x)$ and $\TanAW(E,x)$ is that in the latter, two isometrically equivalent elements are indistinct while in the former they are not. For example, if
	\[ \mathcal{S} = \{\bz\}\cup\{te^{i\log{t}} : t> 0 \} \subset\R^2\]
	is the logarithmic spiral, then $\TanAW(\mathcal{S},\bz) = \{e^{i\theta}\mathcal{S}:\theta\in [0,2\pi)\}$ while $\TanGH(\mathcal{S},\bz)$ contains only one element (which is the isometry class of $\mathcal{S}$). See also \cite[Lemma 3.1]{BET} for additional examples.
	On the other hand, $\TanGH(E,x)$ is the set of isometry classes of elements of $\TanAW(E,x)$ in the sense that every element in $\TanAW(E,x)$ is isometric to exactly one element in $\TanGH(E,x)$ and every element in $\TanGH(E,x)$ is isometric to some element in $\TanAW(E,x)$ (see Theorem \ref{t:two-tangents} for the precise statement).
	
\begin{figure}
 \centering
  \includegraphics[width=0.3\textwidth]{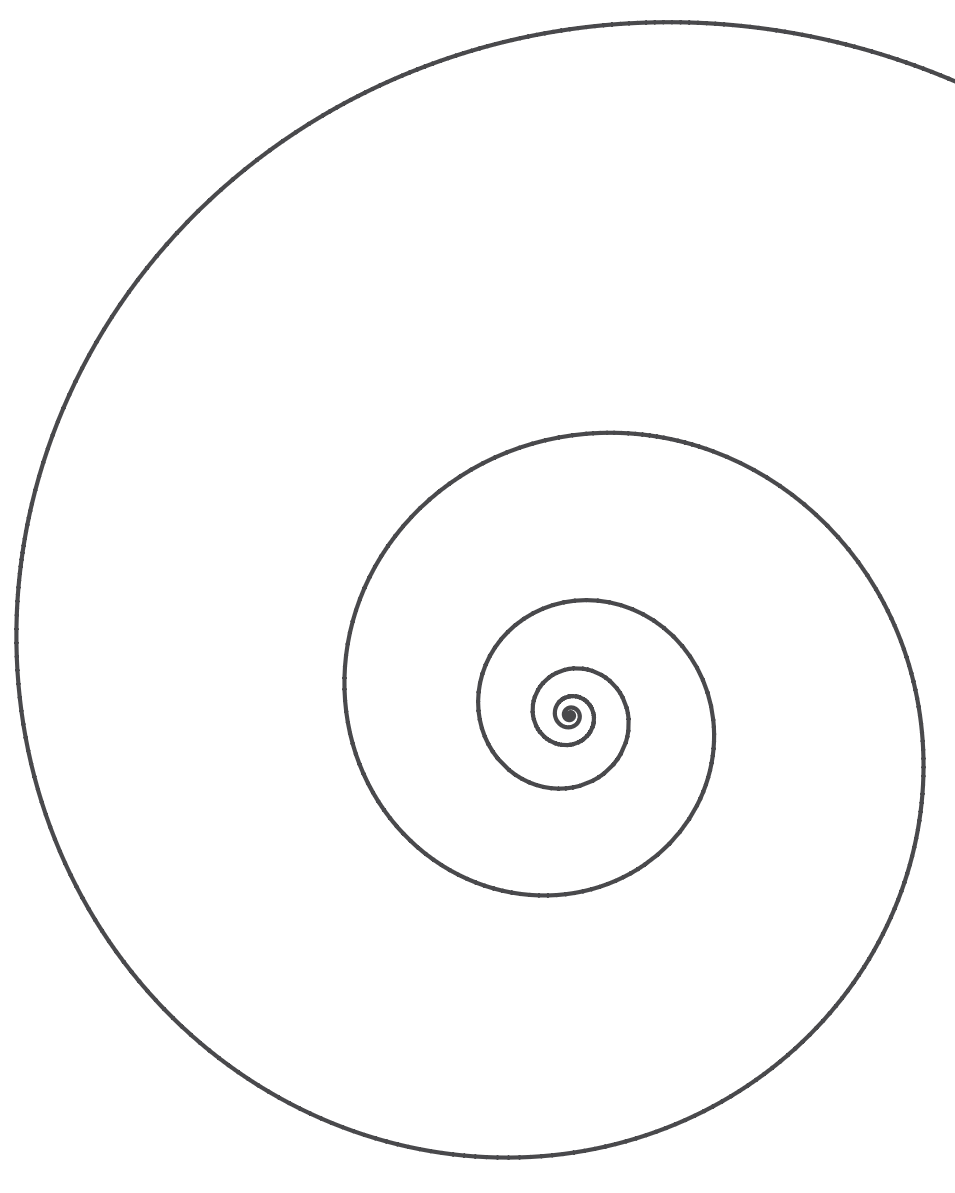}
 \caption{The logarithmic spiral $\mathcal{S}$.}
 \end{figure}

	The next result is the metric version of Theorem \ref{thm:main-euc}.
	
	\begin{theorem}\label{thm:main-met} Let $n\geq 1$ and let $p\in(n,\infty]$. For every metric space $X$ and continuous map $f \in N^{1,p}(\mathbb{B}^n,X)$, the image $f(\mathbb{B}^n)$ has finite $\Haus^n$ measure, and at $\Haus^n$ almost every $x \in f(\mathbb{B}^n)$ the tangent space $\TanGH(f(\mathbb{B}^n),x)$ is the pointed isometry class of an $n$-dimensional pointed Banach space $(\R^n,\|\cdot\|_x,\bz)$.
	\end{theorem}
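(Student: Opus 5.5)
We reduce Theorem~\ref{thm:main-met} to the Euclidean-target situation by an isometric embedding, and then follow the strategy outlined in the abstract: approximate tangents, finite packing content, and an upgrade argument.

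\emph{Finiteness of measure.} Since $f$ is continuous, $\overline{\mathbb{B}^n}$-type compactness arguments (restricting to slightly smaller balls if needed) let us assume $Y=f(\mathbb{B}^n)$ is separable. By the Kuratowski/Fr\'echet embedding we may then regard $Y\subset\ell^\infty$. Finiteness of $\Haus^n(Y)$ comes from the Morrey--Sobolev estimate for $p>n$. For every ball $B\subset\mathbb{B}^n$ of radius $r$ we have
\[
\diam f(B)\lesssim r^{1-n/p}\Bigl(\int_{B}\rho^p\Bigr)^{1/p},
\]
valid for $N^{1,p}$ maps because of the curve inequality. Summing over a Whitney/dyadic cover and applying H\"older's inequality gives $\Haus^n(Y)\lesssim\|\rho\|_{L^p}^n<\infty$. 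The same estimate, applied at all scales and all centers, shows that the \emph{$n$-packing content} of $Y$ is finite: any disjoint family of balls $B(y_i,r_i)$ with $y_i\in Y$ satisfies $\sum r_i^n\le C$. This is the key quantitative input.

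\emph{Rectifiability and approximate tangents.} By the Lusin-type property of Sobolev maps with $p>n$ (Calder\'on's argument combined with Kirchheim's metric differentiability, Theorem~\ref{t:kirch}), $\mathbb{B}^n$ is covered up to a null set by countably many sets $A_k$ on which $f$ is Lipschitz. The map $f$ also satisfies Lusin's condition (N), so $\Haus^n(f(N))=0$ for every null set $N$. Kirchheim's theorem then shows that at a.e.\ point $a\in A_k$ the map $f$ is metrically differentiable with seminorm $\|\cdot\|_a$. Discarding the points where this seminorm is degenerate (their image is $\Haus^n$-null by the area formula), we obtain the following: for $\Haus^n$-a.e.\ $x=f(a)$, density-point considerations show that $(Y,r^{-1}d,x)$ contains, on a set of asymptotically full measure, an almost-isometric copy of $(\R^n,\|\cdot\|_a,\bz)$. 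Area-formula and density arguments also give that a.e.\ $x$ has a unique preimage among density points. Thus $(\R^n,\|\cdot\|_a)$ is an ``approximate'' GH tangent in the measure-theoretic sense.

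\emph{Upgrade (main obstacle).} We must show that no piece of $Y$ lies at distance $\ge\varepsilon r$ from the approximate tangent inside $B(x,r)$, for small $r$. Suppose that on a set $F\subset Y$ of positive measure there are arbitrarily small scales $r$ at which $B(x,r)$ contains a point $z$ with $\dist(z,\text{good part})\ge\varepsilon r$. The good part here is the image of the density set, near which $Y$ looks like the normed plane. We then find a small ball $B(z,\varepsilon r/2)$ containing points of $Y$ that carry small measure, but which nevertheless contribute $\gtrsim(\varepsilon r)^n$ to packing. The trick is to keep the ball's center in $Y$: because $f$ is continuous and $Y$ is connected near $z$, by Morrey's oscillation estimate we can place $\gtrsim 1$ disjoint balls of radius $\sim\varepsilon r$ at such centers.

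Using a Vitali covering of $F$ at these bad scales, and iterating over dyadically many disjoint scales (which is possible since bad scales occur infinitely often), we produce disjoint packings whose total exceeds any bound. Since the good balls around points of $F$ are disjoint from the bad balls found at smaller scales, the sums accumulate, which contradicts finite packing content. Hence for $\Haus^n$-a.e.\ $x$ every pointed GH limit of $(Y,r_j^{-1}d,x)$ equals $(\R^n,\|\cdot\|_x,\bz)$, so $\TanGH(Y,x)$ is a single isometry class. The delicate step is the disjointness bookkeeping across scales in this packing argument; I would first try to make it work with a stopping-time selection of maximal bad scales.
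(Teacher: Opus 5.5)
Your first two steps are fine, and the first is a more direct route than the paper's. Instead of building the lower $n$-regular measure $D^n\delta_{\bz}+\tilde f_*[(1+g^p)\,dx]$ through the capacity bound \eqref{e:cap-leb}, you get finite packing content straight from Morrey's estimate. You should, however, state the step that makes ``all scales and all centers'' work.
\begin{itemize}
\item For disjoint balls $B(y_i,r_i)$ with $y_i=f(a_i)$, let $s_i$ be maximal with $f(B(a_i,s_i)\cap\mathbb{B}^n)\subset B(y_i,r_i)$.
\item These domain balls have disjoint interiors because the image balls are disjoint.
\item Morrey gives $r_i\lesssim s_i^{1-n/p}\|\rho\|_{L^p(B(a_i,s_i)\cap\mathbb{B}^n)}$.
\item H\"older together with $\sum_i s_i^n\lesssim 1$ then yields $\sum_i r_i^n\lesssim\|\rho\|_{L^p}^n$.
\end{itemize}
Your claim that a.e.\ $x$ has a unique preimage among density points is false (a folding map gives two), but you never use it. Uniqueness of the approximate tangent should come from Kirchheim's theorem applied to $Y$ itself, as in Lemma~\ref{c:apptan-normed-spaces-exist}.

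The genuine gap is the upgrade step. This is exactly the new content of the paper (Lemma~\ref{l:ex-helper} and Proposition~\ref{p:expansive}), and you leave it as ``delicate bookkeeping.'' The packing ball $B(z,\varepsilon r/2)$ is far only from the \emph{good part} $X_{x,r}$ of $B(x,r)$, not from $F$: $z$ may itself lie in $F$, and $F\cap B(x,r)\setminus X_{x,r}$ may cluster near $z$. So if the next generation is again a Vitali cover of $F$, its balls (and the far balls inside them) can land on $B(z,\varepsilon r/2)$, and your asserted disjointness across scales fails. A stopping time on maximal bad scales, by itself, produces a single generation and hence only a finite total $\gtrsim\varepsilon^n\Haus^n(F)$.

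What is missing is to nest the generations on the good sets. Inside each selected ball $B$ of generation $k$:
\begin{itemize}
\item run the Vitali covering only on $F_B=F\cap X_{x,r}\cap B$;
\item use balls of radius $\ll\varepsilon r(B)$, kept inside a slight enlargement of $B$ that is disjoint from the enlargements of its siblings;
\item select only scales at which $\Haus^n(F\cap B\setminus F_B)\le\delta_k\Haus^n(F\cap B)$, with $\prod_k(1-\delta_k)>0$.
\end{itemize}
Then each far ball misses every descendant of its own ball and every ball in other branches, so all far balls are pairwise disjoint. Meanwhile, the upper density bound $\Haus^n(F\cap B)\lesssim r(B)^n$ shows that every generation has $\sum r(B)^n\gtrsim\prod_j(1-\delta_j)\,\Haus^n(F)$. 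Summing over generations gives infinite $n$-packing content.

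This also needs a Vitali covering theorem for $\Haus^n$ on these sets, i.e.\ asymptotic doubling, which you should justify (here it follows from the density-one conclusion of Kirchheim's theorem). Finally, the remark about using connectedness and Morrey to keep the center in $Y$ is beside the point, since $z\in Y$ already.
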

	
	In both Theorem \ref{thm:main-euc} and Theorem \ref{thm:main-met}, we can replace $\cH^n$ with the packing $n$-measure $\mathcal{P}^n$. Additionally, both Theorem \ref{thm:main-euc} and Theorem \ref{thm:main-met} are special cases of the more general results Theorem \ref{t:euc-flat-tangents} and Theorem \ref{t:flat-tangents}. A collection of closely related results state that $n$-rectifiable Euclidean sets with finite Hausdorff $n$-measure have unique \emph{approximate tangent $n$-planes} almost everywhere; see Lemma \ref{l:apptan-planes-exist} for a proof of one version and \cite[Theorem 11.6, Remark 11.7]{Simon} and \cite[Theorem 15.19]{Ma95} for others. Roughly speaking, an $n$-plane $P$ is an approximate tangent of $E$ at $x$ if there exists $\tilde{E}\subset E$ containing $x$ such that the Hausdorff $n$-density of $E\setminus \tilde{E}$ at $x$ is zero and $P$ is an Attouch-Wets tangent of $\tilde{E}$ at $x$.
	A metric analogue of this result was given by Kirchheim \cite{Ki94}. Note that the criteria for the existence of an approximate tangent $n$-plane are weaker than for a ``true'' tangent plane. In order to upgrade these results to existence of true tangents, some additional control is required; we replace the finiteness of the Hausdorff $n$-measure with a stronger condition: finiteness of the \emph{$n$-packing content}. See Example \ref{ex:approx-vs-true} for comparison between approximate tangents and true tangents.
	
	The distinction between these two notions of tangent has been examined before in different contexts. We have borrowed the terminology of true tangent from \cite{FTV25} in which the authors show that a certain square function bound allows one to (in our terminology) upgrade an approximate tangent to a true tangent at a point in the common boundary of two domains. Related results in \cite{HV22} and \cite{Vi20} also give characterizations of the existence of a true tangent at a point in different settings using different geometric square functions.
	
\begin{figure}
 \centering
  \includegraphics[width=0.3\textwidth]{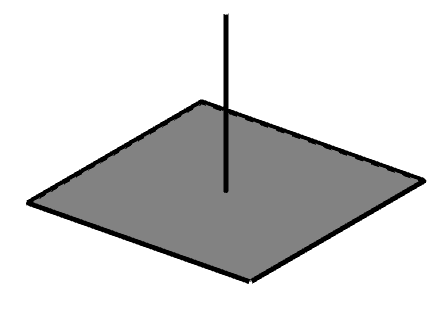}
 \caption{If $E=[0,1]^2\times\{0\} \cup \{(0,0)\}\times[0,1]$, then $E$ has an approximate tangent plane at the origin but not a ``true'' tangent plane. Here $\tilde{E} = \{(0,0)\}\times[0,1]$.}
 \end{figure}	


	While Rademacher's theorem--and its geometric versions--generalize from Lipschitz functions to $p$-Sobolev functions with $p>n$, any attempt for further generalization would be hopeless. The assumption $p>n$ is sharp: in Example \ref{ex:psharp}, we construct for each $n\geq 2$ a function $f\in W^{1,n}(\mathbb{B}^n,\R^{n+1})$ such that on a set of points $x\in f(\mathbb{B}^n)$ of positive $\Haus^n$ measure, there exists a tangent in $\TanAW(f(\mathbb{B}^n),x)$ that is not an $n$-plane.  For maps of lower regularity, the situation is even more complex. In \cite{ShawV}, Shaw and the third named author constructed for each $s>1$, a $(1/s)$-H\"older map $f:[0,1]\to \R^n$ such that $\Haus^s(f([0,1])) >0$ and for $\Haus^s$-almost every $x$ in the image, the space $\TanAW(f([0,1]),x)$ contains infinitely many topologically distinct elements.
	
	Finally, we note that a weaker version of tangents (known as \emph{pseudo-tangents}) exists in the literature. A metric space $T$ is a pseudo-tangent of $X\subset \R^n$ at a point $x\in X$ in the sense of Attouch-Wets (resp.~Gromov-Hausdorff), if there exists a sequence of points $(x_j)$ in $X$ and a sequence of positive numbers $(r_j)$ such that $x_j \to x$, $r_j \to 0$, and spaces $r_j^{-1}(X-x_j)$ (resp.~pointed spaces $(r_j^{-1}X,x_j)$) converge to $T$ in the Attouch-Wets (resp.~Gromov-Hausdorff) topology. It turns out that neither of our main results extends to pseudo-tangents; see Example \ref{ex:pseudot}.
	
	\section{Preliminaries}
	
	Here and for the rest of the paper, when we write $x \lesssim y$, we mean there is a constant $C > 0$ such that $x \leq Cy$. If the constant $C$ depends on parameters $c_1,\dots,c_n$, then we write $x \lesssim_{c_1,\dots,c_n} y$.
	
	Given a metric space $X$ and $r\in(0,\infty)$, we let $rX$ denote the metric space with the metric on $X$ scaled by $r$; that is, $X$ and $rX$ refer to the same point set, but  $d_{rX}(x,y)=rd_X(x,y)$ for all $x,y\in X$. A pointed metric space $(X,x)$ is a metric space $X$ with a distinguished point $x\in X$. We write $f:(X,x)\rightarrow (Z,z)$ if $f$ is a function with domain $X$ and codomain $Z$ such that $f(x)=z$.
	
	Let $X$ be a metric space and let $E\subset X$ be a nonempty set. We say that $E$ is \emph{proper} if every closed ball in the subspace topology on $E$ is compact. Moreover, we denote by $d_X$ the metric of $X$ and by $B_X(x,r):=\{y\in X:d_X(x,y)\leq r\}$ the closed ball with center $x\in X$ and radius $r>0$. We also write $B_X(E,r) := \bigcup_{x\in E}B_X(x,r)$. Finally, if $B=B_X(x,r)$ is a ball, then we write $r(B) = r$. If $X=\R^n$, then we denote balls and closed neighborhoods by $B^n(x,r)$ and $B^n(E,r)$. For the rest of the paper, we denote by $\textbf{0}$ the origin of $\R^n$ (or a Banach space) and write
	\[ \mathbb{B}^n := B^n(\textbf{0},1)\subset \R^n.\]
	
	If $X$ is a metric space and if $s>0$, then define the Hausdorff $s$-measure on the Borel $\sigma$-algebra of $X$ by
	\[ \mathcal{H}^s(A) = \lim_{\delta\to 0} \mathcal{H}^s_{\delta}(A) = \lim_{\delta\to 0} \inf\left\{ \sum_{i=1}^{\infty}(\diam{U_i})^s : \bigcup_{i=1}^{\infty}U_i \supset A, \, \diam{U_i} \leq \delta \right\}.\]
	We similarly define the Hausdorff $s$-content by
	\[\mathcal{H}^s_\infty(A) = \inf\left\{ \sum_{i=1}^{\infty}(\diam{U_i})^s : \bigcup_{i=1}^{\infty}U_i \supset A \right\}. \]
	
	\begin{definition}[Rectifiability]
		A metric space $X$ is \emph{$n$-rectifiable} if there exist countably many Lipschitz maps $f_i:E_i\subset[0,1]^n\rightarrow X$ with $\Haus^n(X\setminus \bigcup_i f_i(E_i))=0$.
	\end{definition}
	
	\begin{definition}[Finite packing content]\label{def:goodpack}
		Given $s>0$, we say that	a metric space $X$ has \emph{finite $s$-packing content} if there exists $M\in (0,+\infty)$ such that if $B_1,B_2,\dots \subset X$ are mutually disjoint balls satisfying $r(B_i) \leq \diam X$, then
		\begin{equation}\label{eq:n-pack}
			\sum_{i}r(B_i)^s <M.
		\end{equation}
	\end{definition}
	
	The motivation and terminology in Definition \ref{def:goodpack} is based on the notion of the \emph{packing measure}.
	
	\begin{definition}[Packing measure]
		Let $X$ be a metric space and let $s>0$. For any $\delta < \diam(X)$, let $\scB_\delta$ be the family consisting of all countable collections of pairwise disjoint balls centered in $X$ with radii less than or equal to $\delta$. Define
		$$ P^s_\delta(X) = \sup_{\mathcal{B}\in\scB_\delta}\sum_{B\in\mathcal{B}}(2r(B))^s.$$
		The \emph{$s$-packing premeasure} of $X$ is given by
		$$P^s(X) = \lim_{\delta\rightarrow0} P^s_\delta(X).$$
		We obtain the \emph{$s$-packing measure} by defining
		$$\mathcal{P}^s(X) = \inf\bigg\{ \sum_{j\in J}P^s(X_j) : X\subset\bigcup_{j\in J}X_j,\ \text{ $J$ countable} \bigg\}.$$
	\end{definition}
	
	The \emph{packing content} is in some ways a counterpart to the Hausdorff content, except we must restrict the radii of balls in the definition of packing content to prevent it from being infinite trivially.
	
	\begin{definition}[Packing content]
		Let $X$ be a separable metric space and let $s>0$. Define the \emph{$s$-packing content} of $X$ to be $P^s_{\diam(X)}(X)$. In the interest of maintaining an analogy with Hausdorff $s$-content $\mathcal{H}^s_{\infty}$, we will use the notation $P^s_\infty(X) = P^s_{\diam(X)}(X)$. 
	\end{definition}
	
	\begin{remark}\label{rem:haus-pack}
		If $X$ is a separable metric space and $s>0$, then
		$$\Haus_\infty^s(X) \leq \Haus^s(X) \leq \mathcal{P}^s(X) \leq P^s(X) \leq P^s_\infty(X).$$
		Moreover,  $X$ has finite $s$-packing content in the sense of Definition \ref{def:goodpack} if and only if $P^s_\infty(X)  < \infty$.
	\end{remark}
	
	\begin{remark}\label{rem:n-pack}
		If $X$ supports a finite and lower $s$-regular measure $\mu$ (i.e. a measure $\mu$ with $\mu(B_X(x,r)) \gtrsim r^s$ for every ball with $r<\diam{X}$), then $X$ has finite $s$-packing content. Indeed, if $B_1,B_2,\dots \subset X$ are mutually disjoint balls of radii at most $\diam{X}$, then
		\[ \sum_{i}r({B_i})^s \lesssim_s \sum_{i}\mu(B_i) \leq \mu(X) < \infty,\]
		implying $P_\infty^s(X) \lesssim \mu(X)$. One can see this as a sort of dual statement to the mass distribution principle; see \cite[\textsection8.7]{Heinonen01}.
	\end{remark}

	\section{Tangent Sets and Metric Tangents}
	
	In this section we discuss tangents in Euclidean and metric spaces.
	
	\subsection{Tangents in metric spaces} Let $X$ be a metric space. Let $A,B,C\subset X$ be nonempty sets. We define the \emph{excess} of $A$ over $B$ by
	\[ \ex(A,B):=\sup_{a\in A}\inf_{b\in B} d_X(a,b) \in[0,\infty].\]
	By convention, we put $\ex(\emptyset,B):=0$, but leave $\ex(A,\emptyset)$ undefined. For every $x\in X$ and $r>0$, we define the \emph{relative Walkup-Wets distance} in $B_X(x,r)$ to be the quantity
	\begin{equation}\label{def:ww}
		\D^{x,r}[A,B]:=r^{-1} \max\{\ex(A\cap B_X(x,r),B), \ex(B\cap B_X(x,r),A)\}\in[0,\infty).
	\end{equation}
	We now record two properties of the relative Walkup-Wets distance from \cite[Lemma 2.2]{BL15}: \emph{Montonicity} and a \emph{weak quasitriangle inequality}.
	\begin{enumerate}
		\item Monotonicity: If $B_X(x,r) \subset B_X(y,s)$, then
		\begin{equation}\label{eq:quasimonotone}
			\text{If $B_X(x,r) \subset B_X(y,s)$, then $\D^{x,r}[A,B] \leq (s/r) \D^{y,s}[A,B]$.}
		\end{equation}
		\item Weak quasitriangle inequality: If $x\in \overline{B}$, then
		\begin{equation}\label{eq:quasitriangle}
			\D^{x,r}[A,C] \leq 2\D^{x,2r}[A,\overline{B}] + 2\D^{x,2r}[\overline{B},C].
		\end{equation}
		
	\end{enumerate}

	We define \emph{Gromov's pointed distance} by $$H^x[A,B]:=\inf\{\epsilon\in(0,1]: \D^{x,1/\epsilon}[A,B]\leq \epsilon^2\},$$ where we interpret $H^x[A,B]=1$ if the set of admissible $\epsilon$ is empty. This is equivalent to the definition in \cite[Section 6]{Gr81}. Unlike the relative Walkup-Wets distance, the Gromov pointed distance is an actual distance.
	
	The next lemma gives a useful relation between the Walkup-Wets distance and the Gromov pointed distance.
	
	\begin{lemma}\label{lem:1}
		Given $A,B\subset X$, $x\in X$, and $\epsilon\in(0,1)$, we have $H^x[A,B] \leq \epsilon$ if and only if $\D^{x,1/\delta}[A,B] \leq \delta^2$ for all $\delta \in (\epsilon,1)$.
	\end{lemma}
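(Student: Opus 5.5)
The plan is to prove both implications directly from the definition of $H^x$ as an infimum, using the monotonicity property \eqref{eq:quasimonotone} to move from one admissible $\epsilon$ to all larger values $\delta$. Throughout, write $S=\{\eta\in(0,1]:\D^{x,1/\eta}[A,B]\le \eta^2\}$, so that $H^x[A,B]=\inf S$, with the convention $\inf\emptyset = 1$.

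\emph{The key observation} is that $S$ is upward closed within $(0,1)$. Suppose $\eta\in S$ and $\eta<\delta<1$. Then $1/\delta<1/\eta$, so $B_X(x,1/\delta)\subset B_X(x,1/\eta)$. By \eqref{eq:quasimonotone} with $r=1/\delta$ and $s=1/\eta$,
\[
\D^{x,1/\delta}[A,B]\le \frac{1/\eta}{1/\delta}\,\D^{x,1/\eta}[A,B]=\frac{\delta}{\eta}\,\D^{x,1/\eta}[A,B]\le \frac{\delta}{\eta}\,\eta^2=\delta\eta<\delta^2 .
\]
Hence $\delta\in S$.

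\emph{($\Leftarrow$)} Assume $\D^{x,1/\delta}[A,B]\le\delta^2$ for every $\delta\in(\epsilon,1)$. Then $(\epsilon,1)\subset S$. In particular $S$ is nonempty, and $H^x[A,B]=\inf S\le\epsilon$.

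\emph{($\Rightarrow$)} Assume $H^x[A,B]\le\epsilon<1$ and fix $\delta\in(\epsilon,1)$. Since $\inf S\le\epsilon<\delta<1$, the set $S$ is nonempty: if it were empty, the convention would give $H^x[A,B]=1>\epsilon$. By the definition of infimum there is some $\eta\in S$ with $\eta<\delta$. The key observation then gives $\delta\in S$, that is, $\D^{x,1/\delta}[A,B]\le\delta^2$.

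The only point needing care is the empty-set convention, which is exactly why the lemma requires $\epsilon<1$. The monotonicity step itself is immediate; note that it yields the strict inequality $\delta\eta<\delta^2$, so no closure or limiting argument is needed at $\delta$ itself.
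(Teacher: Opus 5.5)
Your proof is correct and follows essentially the same route as the paper. In both, the definition of the infimum supplies an admissible value below $\delta$, monotonicity \eqref{eq:quasimonotone} pushes admissibility up to $\delta$, and the converse is read off directly from the definition of $H^x$. Your upward-closedness formulation is slightly cleaner than the paper's $\theta\to 0$ argument, whose first display uses the bound $\D^{x,1/\epsilon}[A,B]\le\epsilon^2$; that bound can fail when the infimum equals $\epsilon$ but is not attained.
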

	
	\begin{proof}
		Fix $A,B\subset X$, $x\in X$, and $\epsilon\in(0,1)$.
		
		Assume first that $H^x[A,B] \leq \epsilon$. Then by \eqref{eq:quasimonotone}, for any $\theta \in (0,1-\epsilon)$ there exists $t\in [\epsilon,\epsilon+\theta)$ such that
		\[ \D^{x,1/t}[A,B] \leq \frac{1/\epsilon}{1/t}\D^{x,1/\epsilon}[A,B]  \leq  \frac{1/\epsilon}{1/t} \epsilon^2 = \epsilon t < t^2.\]
		If $\delta$ satisfies $0<\delta^{-1} \leq (\epsilon+\theta)^{-1}$, then $\delta^{-1} \leq t^{-1}$ and \eqref{eq:quasimonotone} gives
		\[ \D^{x,1/\delta}[A,B] \leq \frac{1/t}{1/\delta}\D^{x,1/t}[A,B] \leq t\delta \leq \delta^2.\]
		Letting $\theta\to 0$, we conclude that $\D^{x,1/\delta}[A,B] \leq \delta^2$ for all $\delta \in (\epsilon,1)$.
		
		Assume now that $\D^{x,1/\delta}[A,B] \leq \delta^2$ for all $\delta \in (\epsilon,1)$. Then, $H^x[A,B] \leq \delta$ for all $\delta \in (\epsilon,1)$. Letting $\delta \to \epsilon$, we get $H^x[A,B] \leq \epsilon$.
	\end{proof}
	
	
	\begin{definition}[Gromov-Hausdorff distances]\label{def:GHdist}
		For any two pointed metric spaces $(X,x)$ and $(Y,y)$, we define the \emph{pointed Gromov-Hausdorff distance}
		\begin{equation}
			d_{\pGH}((X,x),(Y,y)) := \inf_{\iota_1,\iota_2} H^z[\iota_1(X),\iota_2(Y)]
		\end{equation}
		where the infimum runs over all isometric embeddings $\iota_1:(X,x)\rightarrow (Z,z)$ and $\iota_2:(Y,y)\rightarrow (Z,z)$ into a common pointed metric space $(Z,z)$. It will be convenient to measure this using the following Gromov-Hausdorff variant of the Walkup-Wets distance. For any $r > 0$ and any pointed metric spaces $(X,x)$ and $(Y, y)$, we define
		\begin{equation}
			\D_\GH^{r}[(X,x), (Y,y)] \vcentcolon= \inf_{\iota_1,\iota_2}\D^{z,r}[\iota_1(X), \iota_2(Y)]
		\end{equation}
		where the infimum runs over all isometric embeddings as above. It is straightforward to show that $d_{\pGH}((X,x),(Y,y)) \leq \epsilon$ if and only if $\D_\GH^{1/\epsilon}[(X,x), (Y,y)] \leq \epsilon^2$, and $\D_\GH$ satisfies the following weak form of monotonicity: For any $r < s$,
		\begin{equation}
			\D_\GH^{r}[(X,x), (Y,y)] \leq \frac{s}{r}\D_\GH^s[(X,x),(Y,y)].
		\end{equation}
	\end{definition}
	
	Finally, we give the definition of tangent space that we will use.
	
	\begin{definition}[metric tangents]\label{def:met-tan}
		Let $X$ be a complete metric space and let $x\in X$. We say that a complete pointed metric space $(Y,y)$ is a \emph{metric tangent} or \emph{pointed Gromov-Hausdorff tangent} of $X$ at $x$ if $(r_i^{-1}X,x)\rightarrow (Y,y)$ in the pointed Gromov-Hausdorff topology for some sequence $r_i\rightarrow 0$. We let $\Tan_\GH(X,x)$ denote the set of pointed isometry classes of metric tangents of $X$ at $x$.
	\end{definition}
	
	By Gromov's compactness criterion \cite[Section 6]{Gr81}, any locally compact doubling space admits metric tangents at every point; see also \cite[Theorem 2.11]{Heinonen03}. The following lemma frames phrases the existence of Gromov-Hausdorff tangents in terms of $\D_\GH$.
	
	\begin{lemma}\label{l:dgh-basics}
		We have that $[(Y,y)]\in\TanGH(X,x)$ if and only if there exists a sequence $r_i\rightarrow0$ such that \[\lim_{i\rightarrow\infty}\D^R_\GH[(r_i^{-1}X,x),(Y,y)] = 0 \hbox{\quad for all $R > 0$}.\]
	\end{lemma}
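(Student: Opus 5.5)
The plan is to reduce both directions to the equivalence stated in Definition \ref{def:GHdist}: $d_{\pGH}((X,x),(Y,y))\le\epsilon$ if and only if $\D_\GH^{1/\epsilon}[(X,x),(Y,y)]\le\epsilon^2$, combined with the weak monotonicity $\D_\GH^{r}\le (s/r)\D_\GH^{s}$ for $r<s$. By Definition \ref{def:met-tan}, $[(Y,y)]\in\TanGH(X,x)$ means that some sequence $r_i\to0$ has $d_{\pGH}((r_i^{-1}X,x),(Y,y))\to0$. So the lemma says exactly that pointed Gromov--Hausdorff convergence along $r_i$ is the same as $\D^R_\GH\to0$ for every fixed $R$. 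I will show this for one fixed sequence $r_i$, and use the same sequence in both directions.

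\emph{Forward direction.} Set $\epsilon_i := d_{\pGH}((r_i^{-1}X,x),(Y,y))$ and assume $\epsilon_i\to0$. Fix $R>0$. For $i$ large enough that $\epsilon_i<1$ and $1/\epsilon_i > R$, pick $\epsilon_i'\in(\epsilon_i,\min\{1,1/R\})$ with $\epsilon_i'\to0$. Since the pointed distance is an infimum, it is at most $\epsilon_i'$, so the equivalence gives $\D_\GH^{1/\epsilon_i'}\le(\epsilon_i')^2$. Monotonicity with $r=R<s=1/\epsilon_i'$ then yields
\[\D^R_\GH[(r_i^{-1}X,x),(Y,y)] \le \frac{1/\epsilon_i'}{R}(\epsilon_i')^2=\frac{\epsilon_i'}{R}\to0.\]

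\emph{Reverse direction.} Assume $\D^R_\GH\to0$ for every $R$. Fix $\epsilon\in(0,1)$ and take $R=1/\epsilon$. For all large $i$ we have $\D_\GH^{1/\epsilon}[(r_i^{-1}X,x),(Y,y)]\le\epsilon^2$, and hence $d_{\pGH}\le\epsilon$ by the equivalence. Since $\epsilon$ is arbitrary, $d_{\pGH}\to0$, so $[(Y,y)]\in\TanGH(X,x)$.

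The only delicate point is the equivalence in Definition \ref{def:GHdist}, which is asserted there as straightforward. If I need to justify it, I would pass infima through Lemma \ref{lem:1}. The direction $\D_\GH\le\epsilon^2\Rightarrow d_{\pGH}\le\epsilon$ needs some care, because the infima over embeddings in $\D_\GH$ and in $H^z$ need not be attained. I would handle this with slack: choose embeddings achieving $\D^{z,1/\epsilon}\le\epsilon^2+\eta$, then use monotonicity to get $\D^{z,1/\delta}\le\delta^2$ for $\delta$ slightly larger than $\epsilon$, which gives $H^z\le\delta$. This is harmless here, because the argument above only uses the equivalence up to arbitrarily small losses in $\epsilon$.
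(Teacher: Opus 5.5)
Your proposal is correct and is essentially the paper's proof: both directions combine the equivalence $d_{\pGH}\le\epsilon \iff \D_\GH^{1/\epsilon}\le\epsilon^2$ from Definition~\ref{def:GHdist} with the weak monotonicity of $\D_\GH$, and your reverse direction matches the paper's. In the forward direction you choose $\epsilon_i'$ strictly larger than $d_{\pGH}$, which is slightly more careful than the paper's non-strict bound and is needed: with closed balls, $d_{\pGH}\le\epsilon$ need not imply $\D_\GH^{1/\epsilon}\le\epsilon^2$ when a point sits exactly at distance $1/\epsilon$ from the base point, whereas $d_{\pGH}<\epsilon<1$ does imply it via Lemma~\ref{lem:1}.
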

	
	\begin{proof}
		Assume first that $[(Y,y)]\in\TanGH(X,x)$ and fix $R>0$ and $\epsilon < 1/R$. There exists a sequence $(r_i)_i$ and there exists $i_0 \geq 0$ such that $r_i\to0$ and $d_{\pGH}((r_i^{-1}X,x),(Y,y)) \leq \epsilon$ for all $i \geq i_0$. By monotonicity,
		\[\D_\GH^R[(r_i^{-1}X,x),(Y,y)]  \leq \frac{1/\epsilon}{R}\D_\GH^{1/\epsilon}[(r_i^{-1}X,x),(Y,y)] \leq \frac{1/\epsilon}{R}\epsilon^2 \leq \frac{\epsilon}{R}\]
		for all $i \geq i_0$. Letting $\epsilon\rightarrow0$ shows $\lim_{i\rightarrow\infty}D_\GH^R[(r_i^{-1}X,x),(Y,y)]  = 0$. For the converse, fix a sequence $r_i\to 0$ such that $\lim_{i\to\infty}\D_\GH^{R}[(r_i^{-1}X,x),(Y,y)] =0$ for all $R>0$. Fix also $\epsilon > 0$ and let $i_0$ be such that $\D_\GH^{1/\epsilon}[(r_i^{-1}X,x),(Y,y)] < \epsilon^2$ for all $i \geq i_0$. Then $d_{\pGH}((r_i^{-1}X,x),(Y,y)) < \epsilon$ for all $i \geq i_0$, showing
		\[ \lim_{i\rightarrow\infty}d_{\pGH}((r_i^{-1}X,x),(Y,y)) = 0. \qedhere\]		
	\end{proof}
	We will also need to understand the relationship between the pointed Gromov-Hausdorff distance between spaces and the existence of different kinds of ``approximate isometries" between spaces. We will use the notion of \emph{$\epsilon$-isometry} from \cite{Ba22}.
	\begin{definition}[$\epsilon$-isometry]
		Let $(X,x)$, $(Y,y)$ be pointed metric spaces, and let $\epsilon > 0$. We say that a Borel map $f:(X,x)\rightarrow (Y,y)$ is an $\epsilon$-isometry if $f(x) = y$ and both
		\begin{equation*}
			\sup_{z,z' \in B_X(x,1/\epsilon)}|d_Y(f(z),f(z')) - d_X(z,z')| \leq \epsilon,
		\end{equation*}
		and
		\begin{equation*}
			B_Y(y,1/\epsilon - \epsilon) \subset B_Y(f(B_X(x,1/\epsilon)),\epsilon).
		\end{equation*}
	\end{definition}
	
	\begin{lemma}[{\cite[Lemma 2.24]{Ba22}}]\label{l:GH-isom}
		Let $0 < \epsilon < 1/2$. Let $(X,x)$ and $(Y,y)$ be pointed metric spaces. The following statements hold:
		\begin{enumerate}
			\item \label{item:pGH-implies-eps-isom} If $X$ and $Y$ are separable and $d_{\pGH}((X,x),(Y,y)) < \epsilon$, then there exists a pointed Borel map $f:X\cap B_X(x,1/\epsilon)\rightarrow (Y,y)$ such that \[|d_Y(f(x'),f(x'')) - d_X(x',x'')| \leq 2\epsilon\] for all $x',x''\in B_X(x,1/\epsilon)$ and \[B_Y(y,1/\epsilon-\epsilon)\subset B_Y(f(B_X(x,1/\epsilon)),2\epsilon).\] In particular, there exists a $2\epsilon$-isometry between $(X,x)$ and $(Y,y)$.
			
			\item \label{item:eps-isom-implies-pGH} If there exists an $\epsilon$-isometry from $(X,x)$ to $(Y,y)$, then
			\[ d_\pGH((X,x),(Y,y)) \leq 2\epsilon.\]
		\end{enumerate}
	\end{lemma}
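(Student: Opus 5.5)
\emph{Part \eqref{item:pGH-implies-eps-isom}.} I would go from the Gromov pointed distance to an approximate isometry through Lemma \ref{lem:1}. Since $d_\pGH((X,x),(Y,y))<\epsilon$, there are pointed isometric embeddings $\iota_1:(X,x)\to(Z,z)$ and $\iota_2:(Y,y)\to(Z,z)$ with $H^z[\iota_1X,\iota_2Y]<\epsilon$. Lemma \ref{lem:1} then gives some $\delta<\epsilon$ with $\D^{z,1/\delta}[\iota_1X,\iota_2Y]\le\delta^2$. Unpacking \eqref{def:ww}, each excess inside $B_Z(z,1/\delta)$ is at most $\delta<\epsilon$. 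So every point of $\iota_1X$ in $B_Z(z,1/\epsilon)$ lies within $\epsilon$ of $\iota_2Y$, and conversely.

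To get a Borel choice I would use separability. Fix a countable dense set $\{y_k\}\subset Y$. For $x'\in B_X(x,1/\epsilon)\setminus\{x\}$, let $f(x')=y_k$ with $k$ minimal such that $d_Z(\iota_1x',\iota_2y_k)<\epsilon$, and set $f(x)=y$. Each level set of $f$ is a finite Boolean combination of open sets, so $f$ is Borel. For the distortion, the triangle inequality in $Z$ gives $|d_Y(f(x'),f(x''))-d_X(x',x'')|<2\epsilon$, because each point moves by less than $\epsilon$. For the density claim, take $y'\in B_Y(y,1/\epsilon-\epsilon)$. There is $x'\in X$ with $d_Z(\iota_1x',\iota_2y')<\epsilon$. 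Then $d_X(x,x')\le 1/\epsilon$, so $x'$ lies in the domain, and $d_Y(f(x'),y')<2\epsilon$. Since $\epsilon<1/2$, the ball $B_X(x,1/(2\epsilon))$ lies in $B_X(x,1/\epsilon)$, and the ``in particular'' statement follows.

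\emph{Part \eqref{item:eps-isom-implies-pGH}.} Here I would glue $X$ and $Y$ along $f$. On the disjoint union $X\sqcup Y$, keep $d_X$ and $d_Y$, and for $a\in X$, $b\in Y$ define
\[ d(a,b)=\min\Big\{\inf_{w\in B_X(x,1/\epsilon)}\big(d_X(a,w)+\epsilon+d_Y(f(w),b)\big),\; d_X(a,x)+d_Y(y,b)\Big\}. \]
The second term (wedge gluing) is there so that $d(x,y)=0$. After taking the metric quotient, the basepoints coincide and we obtain pointed isometric embeddings into a common pointed space.

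The main work is the triangle inequality for two points of $X$ passing through a point of $Y$ (the other cases are symmetric or direct). If both legs use the $f$-term, the $2\epsilon$ of glue absorbs the distortion, since $d_Y(f(w),f(w'))\ge d_X(w,w')-\epsilon$. If one leg uses the $f$-term and the other the wedge term, we use $f(x)=y$ and $x\in B_X(x,1/\epsilon)$:
\[ d_Y(y,f(w))+\epsilon\ge d_X(x,w). \]
If both legs use the wedge term, the inequality is immediate.

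For the estimate, points of $X$ in the ball of radius $1/(2\epsilon)$ are within $\epsilon$ of $Y$ by using $w=a$. Points $b\in Y$ with $d_Y(y,b)\le 1/(2\epsilon)\le 1/\epsilon-\epsilon$ satisfy $d_Y(b,f(w))\le\epsilon$ for some $w$, and hence lie within $2\epsilon$ of $X$. So every excess inside $B(z,1/(2\epsilon))$ is at most $2\epsilon$. For each $\delta\in(2\epsilon,1)$ this gives
\[ \D^{z,1/\delta}\le 2\epsilon\,\delta\le\delta^2. \]
Lemma \ref{lem:1} (its converse direction, with $2\epsilon$ in place of $\epsilon$, assuming $2\epsilon<1$) then yields $H^z\le 2\epsilon$, hence $d_\pGH\le2\epsilon$. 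I expect the main obstacle to be the bookkeeping that makes the two gluings compatible in the triangle inequality.
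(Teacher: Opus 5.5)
The paper gives no proof of this lemma; it is quoted from \cite[Lemma 2.24]{Ba22}, so there is no in-paper argument to compare against. Your self-contained proof is correct.

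In (1), the minimal-index selection from the countable dense set $\{y_k\}$ gives a Borel map with countably many values. Every estimate then follows from the fact that $\iota_1 x'$ and $\iota_2 f(x')$ are at $Z$-distance $<\epsilon$.

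In (2), the wedge term is exactly what makes $d(x,y)=0$, so that the two embeddings are pointed. The mixed triangle-inequality cases all close using the two-sided distortion bound on $B_X(x,1/\epsilon)$ together with $f(x)=y$. The $Y$--$X$--$Y$ case is not literally symmetric to the one you wrote out. It uses the other side of the distortion bound, namely $d_Y(f(w),f(w'))\le d_X(w,w')+\epsilon$ and $d_Y(y,f(w))\le d_X(x,w)+\epsilon$, but it closes the same way. Your uses of $\epsilon<1/2$, to get $1/(2\epsilon)\le 1/\epsilon-\epsilon$ and $2\epsilon<1$ for Lemma \ref{lem:1}, are fine.

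One step needs a different justification: the ``in particular'' claim in (1). A $2\epsilon$-isometry requires
\[
B_Y(y,1/(2\epsilon)-2\epsilon)\subset B_Y\big(f(B_X(x,1/(2\epsilon))),2\epsilon\big),
\]
that is, covering by the image of the \emph{smaller} ball. This does not follow from $B_X(x,1/(2\epsilon))\subset B_X(x,1/\epsilon)$, which only handles the distortion condition. It also does not follow from your displayed covering statement: combined with the distortion bound, that statement only places the preimage point in $B_X(x,1/(2\epsilon)+2\epsilon)$.

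The fix is to rerun your own argument. For $y'\in B_Y(y,1/(2\epsilon)-2\epsilon)$, the point $x'$ with $d_Z(\iota_1x',\iota_2y')<\epsilon$ satisfies $d_X(x,x')<1/(2\epsilon)-\epsilon$, and also $d_Y(f(x'),y')<2\epsilon$. You should also extend $f$ Borel-measurably to all of $X$ (for example by $f\equiv y$ off the ball), since the paper's definition of an $\epsilon$-isometry asks for a Borel map defined on $X$.
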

	\begin{lemma}[excess lower bound]\label{l:mt-ex-lb}
		Suppose that $(Z,z)$ is a pointed metric space. Let $\td{X},X,Y\subset Z$ with $\td{X}\subset X$, let $z\in \td{X}\cap Y$, and let $\epsilon , r > 0$. If $\td{X}$ is closed and
		\begin{equation*}
			\D^{z,r/4}[X,Y] > \epsilon \text{\quad and \quad} \D^{z,r}[\td{X},Y] < \frac{\epsilon}{8},
		\end{equation*}
		then
		\begin{equation*}
			\ex(X\cap B_Z(z,r/2),\td{X}) \geq \frac{\epsilon}{8}r.
		\end{equation*}
	\end{lemma}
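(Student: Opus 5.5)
The plan is to argue by contradiction, combining the weak quasitriangle inequality \eqref{eq:quasitriangle} with monotonicity \eqref{eq:quasimonotone}, using $\td{X}$ as the intermediate set. So suppose that
\[ \ex(X\cap B_Z(z,r/2),\td{X}) < \frac{\epsilon}{8}r. \]
We will show this forces $\D^{z,r/4}[X,Y] < \epsilon$, contradicting the hypothesis.

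First we apply \eqref{eq:quasitriangle} with $A=X$, $C=Y$, $B=\td{X}$, and radius $r/4$. This is legitimate because $\td{X}$ is closed, so $\overline{\td{X}}=\td{X}$, and $z\in\td{X}$. It gives
\[ \D^{z,r/4}[X,Y] \leq 2\D^{z,r/2}[X,\td{X}] + 2\D^{z,r/2}[\td{X},Y]. \]

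Next we bound the two terms separately.

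For the second term, the inclusion $B_Z(z,r/2)\subset B_Z(z,r)$ and \eqref{eq:quasimonotone} give
\[ \D^{z,r/2}[\td{X},Y]\leq 2\D^{z,r}[\td{X},Y] < \frac{\epsilon}{4}. \]

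For the first term, we unpack the definition \eqref{def:ww}:
\[ \D^{z,r/2}[X,\td{X}] = \frac{2}{r}\max\{\ex(X\cap B_Z(z,r/2),\td{X}),\ \ex(\td{X}\cap B_Z(z,r/2),X)\}. \]
Since $\td{X}\subset X$, the second excess is $0$ (when $\td{X}\cap B_Z(z,r/2)$ is empty this holds by convention). By the contradiction assumption, the first excess is less than $\epsilon r/8$. Hence $\D^{z,r/2}[X,\td{X}] < \epsilon/4$.

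Combining the two bounds,
\[ \D^{z,r/4}[X,Y] < 2\cdot\frac{\epsilon}{4}+2\cdot\frac{\epsilon}{4}=\epsilon, \]
which contradicts $\D^{z,r/4}[X,Y]>\epsilon$. Therefore $\ex(X\cap B_Z(z,r/2),\td{X})\geq \epsilon r/8$.

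There is no real obstacle here. The only point requiring care is choosing the radii so that \eqref{eq:quasitriangle} (which doubles the radius) and \eqref{eq:quasimonotone} (which costs a factor of $2$ when passing from $r/2$ to $r$) together produce exactly the constants $r/4$, $r/2$, $r$ and $\epsilon/8$ appearing in the statement.
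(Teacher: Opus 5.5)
Your proof is correct and is essentially the paper's argument. Both use the weak quasitriangle inequality at radius $r/4$ with the closed set $\tilde{X}$ as the intermediate set, monotonicity to pass from $r/2$ to $r$ in the $(\tilde{X},Y)$ term, and the vanishing of $\ex(\tilde{X}\cap B_Z(z,r/2),X)$ because $\tilde{X}\subset X$. The only difference is cosmetic: you argue by contradiction, whereas the paper chains the same inequalities directly into $\epsilon < \frac{4}{r}\ex(X\cap B_Z(z,r/2),\tilde{X}) + \frac{\epsilon}{2}$ and rearranges.
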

	\begin{proof}
		The fact that $\td{X}\subset X$ implies $\ex(\td{X}\cap B_Z(z,r),X) = 0$. Using the weak quasitriangle inequality \eqref{eq:quasitriangle}, the fact that $x\in \td{X}\cap Y$, and monotonicity \eqref{eq:quasimonotone}, we get
		\begin{align*}
			\epsilon < \D^{z,r/4}[X, Y] &\leq 2\D^{z,r/2}[X,\td{X}] + 2\D^{z,r/2}[\td{X},Y] \\
			&\leq 4\frac{\ex(X\cap B_Z(z,r/2),\td{X})}{r} + 4\D^{z,r}[\td{X},Y]\\
			& \leq \frac{4}{r}\ex(X\cap B_Z(z,r/2), \td{X}) + \frac{\epsilon}{2}.
		\end{align*}
		Rearranging this inequality gives $\ex(X\cap B_Z(z,r/2),\td{X}) \geq (\epsilon/8) r$.
	\end{proof}
	
	\subsection{Attouch-Wets tangents}\label{sec:euc_tang}
	Because the Gromov-Hausdorff distance does not distinguish between rotating tangent planes in Banach spaces, we require different notions of tangents and some modified arguments for the Euclidean versions of our results.
	
	The notion of distance we use in defining tangents of subsets of Euclidean spaces actually induces the \textit{Attouch-Wets} topology on closed sets. Given $n\in\N$, let $\mathfrak{C}(\R^n)$ be the set of nonempty closed subsets of $\R^n$; let $\mathfrak{C}(\R^n,{\bf 0})$ be the collection of nonempty closed subsets of $\R^n$ containing the origin ${\bf 0}$. We consider both of these spaces equipped with the \emph{Attouch-Wets} topology, which is defined in \cite[Definition 3.1.2]{Beer}. The following lemma gives a useful characterization of this convergence and tells us that the subfamily of closed sets containing the origin is sequentially compact in this topology.
	
	\begin{lemma}[{\cite[Theorem 3.1.7]{Beer}, \cite[Lemma 8.2]{DS97}}]\label{lem:AW-char}
		There exists a metrizable topology on the set $\mathfrak{C}(\R^n)$ in which a sequence of sets $(X_i)_{i=1}^{\infty}\subset \mathfrak{C}(\R^n)$ converges to a set $X\in\mathfrak{C}(\R^n)$ if and only if
		\[ \lim_{m\to\infty} \D^{{\bf 0}, r}(X_m,X) = 0, \quad\text{for every $r>0$}.\]
		Moreover, the subcollection $\mathfrak{C}(\R^n,{\bf 0})$ is sequentially compact; that is, for any sequence $(X_i)_{i=1}^{\infty}\subset \mathfrak{C}(\R^n,{\bf 0})$ there exists a subsequence $(X_{i_j})_{j=1}^{\infty}$ and a set $X \in \mathfrak{C}(\R^n,{\bf 0})$ such that $(X_{i_j})_{j=1}^{\infty}$ converges to $X$.
		\end{lemma}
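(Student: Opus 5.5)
The plan is to build the metric explicitly from distance functions and then reduce both claims to facts about locally uniform convergence of $1$-Lipschitz functions. For a nonempty closed set $A\subset\R^n$ write $d_A(y):=\dist(y,A)$. Since $A$ is closed, $A=d_A^{-1}(0)$, so $A\mapsto d_A$ is injective. Define
\[ \rho(A,B):=\sum_{k=1}^\infty 2^{-k}\min\Big\{1,\ \sup_{|y|\le 2^k}|d_A(y)-d_B(y)|\Big\}. \]
This is a metric on $\mathfrak{C}(\R^n)$: symmetry and the triangle inequality are inherited from the sup-norms, and $\rho(A,B)=0$ forces $d_A=d_B$, hence $A=B$. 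Moreover, $\rho(X_m,X)\to0$ if and only if $d_{X_m}\to d_X$ uniformly on every ball $B^n(\bz,R)$. So for the first claim it suffices to show that this locally uniform convergence is equivalent to $\D^{\bz,r}[X_m,X]\to0$ for all $r>0$. This equivalence is the step needing care, because $\D$ itself does not satisfy a triangle inequality and so cannot be used directly as a metric.

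\emph{Locally uniform convergence implies $\D\to0$.} Fix $r$ and let $\delta_m:=\sup_{|y|\le r}|d_{X_m}-d_X|$. If $a\in X_m\cap B^n(\bz,r)$, then
\[ \dist(a,X)\le d_{X_m}(a)+\delta_m=\delta_m . \]
Symmetrically, every $a\in X\cap B^n(\bz,r)$ satisfies $\dist(a,X_m)\le\delta_m$. Hence $\D^{\bz,r}[X_m,X]\le\delta_m/r\to0$.

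\emph{$\D\to0$ implies locally uniform convergence.} Fix $R$ and $|y|\le R$, and let $c:=d_X(\bz)<\infty$.
\begin{itemize}
\item A nearest point $x\in X$ to $y$ satisfies $|x|\le 2R+c$.
\item Set $r:=2R+c+1$ and $\epsilon_m:=\D^{\bz,r}[X_m,X]$. Then some point of $X_m$ lies within $\epsilon_m r$ of $x$, so $d_{X_m}(y)\le d_X(y)+\epsilon_m r$.
\item This in turn bounds $d_{X_m}(y)\le 2R+c+1$ once $\epsilon_m r\le1$. Hence a nearest point of $X_m$ to $y$ lies in $B^n(\bz,3R+c+1)$.
\item Applying the same argument with a radius $r'$ large compared to $3R+c+1$ gives the reverse inequality $d_X(y)\le d_{X_m}(y)+\D^{\bz,r'}[X_m,X]\,r'$.
\end{itemize}
Both error terms tend to $0$ uniformly in $|y|\le R$, which gives the claim.

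\emph{Sequential compactness of $\mathfrak{C}(\R^n,\bz)$.} Given $X_i\ni\bz$, the functions $d_{X_i}$ are $1$-Lipschitz and vanish at $\bz$, so they are equibounded on each ball. By Arzel\`a--Ascoli and a diagonal argument, a subsequence converges locally uniformly to a $1$-Lipschitz function $g\ge0$ with $g(\bz)=0$. Put $X:=g^{-1}(0)$, which is closed and contains $\bz$. It then suffices to show $g=d_X$.
\begin{itemize}
\item For $x\in X$ we have $g(y)\le g(x)+|y-x|=|y-x|$, so $g\le d_X$.
\item Conversely, pick nearest points $x_i\in X_i$ to $y$. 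These satisfy $|x_i|\le 2|y|$, so along a further subsequence $x_i\to x$. Since $d_{X_i}(x_i)=0$ and the convergence is locally uniform (together with equicontinuity), $g(x)=0$, i.e.\ $x\in X$. Therefore $d_X(y)\le|y-x|=\lim|y-x_i|=\lim d_{X_i}(y)=g(y)$.
\end{itemize}
Thus $d_{X_i}\to d_X$ locally uniformly, i.e.\ $X_i\to X$ in the metric $\rho$, and by the equivalence above in the sense of $\D^{\bz,r}$ for every $r$.
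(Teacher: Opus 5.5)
Your proof is correct. The paper gives no argument for this lemma; it quotes Beer (Theorem 3.1.7) and David--Semmes (Lemma 8.2). So your proposal supplies a self-contained proof of a fact the paper takes from the literature.

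Your route is the standard one behind Beer's treatment. The metric $\rho$ you write down is, up to normalization, the usual Attouch--Wets metric: uniform convergence of distance functions on bounded sets. The topology you build is therefore the one the paper means by ``Attouch--Wets topology'', not merely some metrizable topology with the right convergent sequences.

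The key steps all go through:
\begin{itemize}
\item The two-sided bounds $d_{X_m}(y)\le d_X(y)+\D^{\bz,r}[X_m,X]\,r$ and $d_X(y)\le d_{X_m}(y)+\D^{\bz,r'}[X_m,X]\,r'$, with $r,r'$ chosen from a priori bounds on nearest points, carry the characterization. The errors are independent of $y$, so convergence is uniform on $B^n(\bz,R)$ as you claim.
\item The compactness part reduces to Arzel\`a--Ascoli for the $1$-Lipschitz functions $d_{X_i}$, normalized by $d_{X_i}(\bz)=0$, together with the identification $g=d_{g^{-1}(0)}$.
\end{itemize}

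Properness of $\R^n$ enters your argument in three places: existence of nearest points, Arzel\`a--Ascoli on balls, and extracting a convergent subsequence of the nearest points $x_i$. This matches the paper's $\ell_\infty$ remark, which shows the compactness statement fails in general Banach spaces.

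The citation buys brevity and the generality of Beer's metric-space setting. Your version buys an explicit metric and a proof that uses only elementary facts about $1$-Lipschitz functions.
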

	
	
	Following \cite{BL15}, we can now define the notion of tangent with respect to this topology.
	
	\begin{definition}[Attouch-Wets Tangent]
		Let $E\in\mathfrak{C}(\R^n)$, $T\in\mathfrak{C}(\R^n,{\bf 0})$, and let $x\in E$. We say that $T$ is a \textit{tangent set} or an \textit{Attouch-Wets tangent} to $E$ at $x$ if and only if there exists a sequence $r_i\rightarrow 0$ such that
		\begin{equation*}
			r_i^{-1}(E-x)\rightarrow T \hbox{ in the Attouch-Wets topology}.
		\end{equation*}
		We denote the set of all Attouch-Wets tangents to $E$ at $x$ by $\Tan_\AW(E,x)$.
	\end{definition}
	
	By Lemma \ref{lem:AW-char}, every closed set $X\subset\R^n$ has Attouch-Wets tangents at every point in the set.
	
	\begin{remark} One could attempt to extend this tangent notion by replacing $\R^n$ with an abstract Banach space $V$. However, unlike metric tangents,  the existence of Attouch-Wets tangents in this more general setting is not guaranteed, even if the set $E$ is doubling and compact. For instance, let $E = \{\bz\}\cup\{2^{-n}\textbf{e}_n : n\in\N\} \subset \ell_{\infty}$ where $\{\textbf{e}_n\}_n$ is the standard basis of $\ell_{\infty}$. It is easy to see that $E$ is doubling and compact. Fix now a sequence $(r_j)_j$ of positive numbers with $r_j\to 0$. We claim that $E$ has no Attouch-Wets tangent at $\bz$. To see that, fix $i,j \in \N$ with $r_j<1$ and $r_i < (1/4)r_j$, and let $n_i,n_j \in \N$ such that $r_i \in (2^{-n_i},2^{-n_i+1}]$ and $r_j \in (2^{-n_j},2^{-n_j+1}]$. Note that $\frac{2^{-n_j}\textbf{e}_{n_j}}{r_j} \in (r_j^{-1}E)\cap B_{\ell_{\infty}}(\bz,1)$, since
		\[\frac{1}{2} \leq \bigg|\frac{2^{-n_j}\textbf{e}_{n_j}}{r_j}\bigg| \leq 1.\]
		Because $(r_i^{-1}E)\cap B_{\ell_{\infty}}(\bz,1)$ is orthogonal to $\textbf{e}_{n_j}$, we know
		\[ \ex((r_j^{-1}E)\cap B_{\ell_{\infty}}(\bz,1),r_i^{-1}E) \geq \inf_{x\in (r_i^{-1}E) \cap B_{\ell_{\infty}}(\bz,1)}\dist(\textbf{e}_j,x) \geq \frac{1}{2}.\]
		The latter, along with the triangle inequality for excess \cite[Section 2]{BL15}, implies that there exists no subsequence of $(r_k^{-1}E)_{k\in\N}$ that converges in the Attouch-Wets topology.
	\end{remark}
	
	In the following lemma, which is the analogue of Lemma \ref{l:dgh-basics}, we record some basic relationships between Attouch-Wets convergence, the Walkup-Wets distance, and Gromov's pointed distance.
	
	\begin{lemma}\label{lem:AW-ph}
		Let $(A_i)_{i\in\N}\subset \mathfrak{C}(\R^d)$ and $A\in \mathfrak{C}(\R^d)$. The following are equivalent:
		\begin{enumerate}
			\item $A_i \to A$ in the Attouch-Wets topology,
			\item there exists $x \in \R^d$ such that $\lim_{i\to \infty}\D^{x,r}[A_i,A]=0$ for all $r>0$,
			\item $\lim_{i\to\infty}\D^{x,r}[A_i,A]=0$ for all $x\in\R^d$ and all $r>0$,
			\item there exists $x \in \R^d$ such that $\lim_{i\to \infty}H^{x}[A_i,A]=0$,
			\item $\lim_{i\to \infty}H^{x}[A_i,A]=0$ for all $x\in \R^d$.
		\end{enumerate}
	\end{lemma}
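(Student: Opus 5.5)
I would prove the equivalences through the cycle (1)$\Leftrightarrow$(2), (2)$\Rightarrow$(3), (3)$\Rightarrow$(5)$\Rightarrow$(4), and (4)$\Rightarrow$(2). Two tools do all the work: monotonicity \eqref{eq:quasimonotone} of the relative Walkup-Wets distance, and Lemma \ref{lem:1}, which translates bounds on $H^x$ into bounds on $\D^{x,r}$.

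\textbf{(1)$\Leftrightarrow$(2).} By Lemma \ref{lem:AW-char}, convergence $A_i\to A$ in the Attouch-Wets topology is equivalent to $\D^{\bz,r}[A_i,A]\to 0$ for every $r>0$. So (1) gives (2) with $x=\bz$. Conversely, (2) gives (1) once we know that the base point can be moved, which is exactly the content of (2)$\Rightarrow$(3).

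\textbf{(2)$\Rightarrow$(3).} Suppose $\D^{x,r}[A_i,A]\to0$ for all $r>0$. Fix $y\in\R^d$ and $r>0$, and set $s=r+|x-y|$. Then $B(y,r)\subset B(x,s)$, so \eqref{eq:quasimonotone} gives
\[
\D^{y,r}[A_i,A]\le (s/r)\,\D^{x,s}[A_i,A]\to0 .
\]
No weak triangle inequality is needed here, because the excess only involves points in the smaller ball, and the second set is not intersected with anything.

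\textbf{(3)$\Rightarrow$(5) and (4)$\Rightarrow$(2).} For (3)$\Rightarrow$(5), fix $x$ and $\epsilon\in(0,1)$. Since $\D^{x,1/\epsilon}[A_i,A]\to0$, we have $\D^{x,1/\epsilon}[A_i,A]\le\epsilon^2$ for large $i$, so $H^x[A_i,A]\le\epsilon$ directly from the definition of $H^x$. The step (5)$\Rightarrow$(4) is trivial. For (4)$\Rightarrow$(2), fix $r>0$ and $\epsilon<\min\{1,1/r\}$, and choose $\delta\in(\epsilon,\min\{1,1/r\})$. Once $H^x[A_i,A]\le\epsilon$, Lemma \ref{lem:1} gives $\D^{x,1/\delta}[A_i,A]\le\delta^2$. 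Since $r\le 1/\delta$, \eqref{eq:quasimonotone} then gives
\[
\D^{x,r}[A_i,A]\le \frac{1/\delta}{r}\,\delta^2=\frac{\delta}{r}.
\]
Letting $\epsilon,\delta\to0$ shows $\D^{x,r}[A_i,A]\to0$.

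\textbf{Main obstacle.} The only real subtlety is bookkeeping: Lemma \ref{lem:1} requires $\epsilon<1$, and the definition of $H^x$ uses the convention $H^x=1$ when no admissible $\epsilon$ exists. Both are handled by restricting to small $\epsilon$. I also need to check that \eqref{eq:quasimonotone} holds even when the balls are not concentric, which follows because the excess is monotone in the restricting ball.
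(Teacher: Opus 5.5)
Your proof is correct and follows the paper's argument essentially step for step. You get (1)$\Leftrightarrow$(2)$\Leftrightarrow$(3) from the characterization in Lemma \ref{lem:AW-char} together with monotonicity at $s=r+|x-y|$, (3)$\Rightarrow$(5) directly from the definition of $H^x$, and (4)$\Rightarrow$(2) from Lemma \ref{lem:1} plus monotonicity. Your choice of $\delta\in(\epsilon,\min\{1,1/r\})$ in (4)$\Rightarrow$(2) is slightly more careful than the paper, which applies Lemma \ref{lem:1} at $\delta=\epsilon$ itself even though that lemma only covers $\delta\in(\epsilon,1)$.
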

	
	\begin{proof}
		By definition, (1) implies (2), and (3) implies (1).
		
		To show that (2) implies (3), fix $x,y \in \R^d$ and $r>0$, and assume that $\lim_{i\to\infty}\D^{x,R}[A_i,A]=0$ for all $R>0$. By \eqref{eq:quasimonotone}, we have that
		\[ \D^{y,r}[A_i,A] \leq \frac{|x-y|+r}{r}\D^{x,r+|x-y|}[A_i,A] \xrightarrow{i\to \infty}0.\]
		
		Clearly, (5) implies (4). To show that (4) implies (2), fix $x\in \R^d$ and $r>0$, and assume that $\lim_{i\to \infty}H^x[A_i,A]=0$. Let $\epsilon \in (0,1/r)$ and let $i_0 \in\N$ such that $H^x[A_i,A] \leq \epsilon$ for all $i\geq i_0$. By Lemma \ref{lem:1} and by \eqref{eq:quasimonotone}, for all $i\geq i_0$
		\[ \D^{x,r}[A_i,A] \leq \frac{1/\epsilon}{r}\D^{x,1/\epsilon}[A_i,A] \leq \frac{1/\epsilon}{r}\epsilon^2 = \epsilon/ r.\]
		Letting $\epsilon \to 0$, we get $\lim_{i\to \infty}\D^{x,r}[A_i,A]=0$.
		
		It remains to show that (3) implies (5). To this end, fix $x\in \R^d$ and let $\epsilon>0$. There exists $i_0 \in \N$ such that for all $i \geq i_0$, $\D^{x,1/\epsilon}[A_i,A] < \epsilon^2$. Then, by definition, $H^x[A_i,A] \leq \epsilon$ for all $i\geq i_0$. Therefore, $\lim_{i\to \infty} H^x[A_i,A] =0$.
	\end{proof}

	\subsection{Conical tangents}
	In this subsection, we derive an additional lemma on conical metric tangents. First, recall the definition of a Euclidean cone.
	\begin{definition}[Euclidean cones]
		We say that $T\subset\R^d$ is a \emph{cone} if $\lambda T = T$ for every $\lambda > 0$.
	\end{definition}
	While Euclidean cones are rather straightforward to work with, it will take some extra work to make similar claims about \emph{metric cones}. The motivating example of a metric cone is a normed space, but several other types of metric cones appear in metric geometry (See \cite{BBI01} Section 3.6.2 and Section 8.2).
	
		
	\begin{definition}[metric cones]\label{def:metric-cones}
		Let $Z$ be a metric space. Define a quotient space
		\[ C(Z) = (Z\times[0,\infty)) / (Z\times\{0\}) \text{\quad and \quad} c = Z\times\{0\}.\]
		We say that the pointed metric space $(C(Z),c)$ is a \emph{metric cone} if the following hold:
		\begin{enumerate}
			\item \label{item:euc-rays} For any $z\in Z$, the space $\{(z,t) : t > 0\}\subset C(Z)$ is isometric to the Euclidean ray $(0,\infty)$.
			\item \label{item:similarities} For any $\lambda > 0$, there exists a pointed bijection $\psi:(C(Z),c)\rightarrow (C(Z),c)$ such that
			\[d_{C(Z)}(\psi(x),\psi(y)) = \lambda d_{C(Z)}(x,y) \hbox{\quad for all $x,y\in C(Z)$}\]
			We call $\psi$ a $\lambda$-\emph{similarity}.
		\end{enumerate}
	\end{definition}
	We will use the following type of approximate isometry from \cite{BHS23} between spaces to study metric cones.
	\begin{definition}[Gromov-Hausdorff approximations]
		Let $(X,x)$ and $(Y,y)$ be pointed metric spaces and $\epsilon > 0$. A Borel mapping $\phi:X\rightarrow Y$ is a \emph{pointed $\epsilon$-Gromov-Hausdorff approximation} (or $\epsilon$-pGHA) if
		\[\sup_{w,v\in X}|d_Y(\phi(w),\phi(v)) - d_X(w,v)| \leq \epsilon \quad\text{and}\quad \sup_{u\in Y}\dist_Y(u,\phi(X)) \leq \epsilon.\]
		We use this to define
		\begin{align*}
			\xi_{X,Y}(x,y,r) \vcentcolon= \inf\{\epsilon > 0: \text{there exists an }\epsilon r&\text{-pGHA}\\
			&\phi:(B_X(x,r),x)\rightarrow (B_Y(y,r),y)\}.
		\end{align*}
	\end{definition}
	Pointed Gromov-Hausdorff approximations are similar to $\epsilon$-isometries, but they are more suited to quantifying how close two metric spaces are inside fixed balls using $\xi$. We will use the following lemma later to pass between bounds on the pointed Gromov-Hausdorff distance of $(r^{-1}X,x)$ to a normed space and the existence of good pGHAs from $B_X(x,r)$ to a normed ball of radius $r$ for $r\rightarrow 0$.
	\begin{lemma}[conical metric tangents]\label{l:mt-cones-and-pGHAs}
		Suppose $(X,x)$ and $(Y,y)$ are separable pointed metric spaces and $(Y,y)$ is a metric cone. Let $\epsilon, r > 0$.
		\begin{enumerate}
			\item\label{item:DGH-implies-xi} If $\D_\GH^{1/\epsilon}[(r^{-1}X,x),(Y,y)] < \epsilon^2$, then $\xi_{X,Y}(x,y,r/\epsilon) < 3\epsilon^2$.
			\item\label{item:xi-implies-DGH} If $\xi_{X,Y}(x,y,r/\epsilon) < \epsilon^2$, then $\sD^{1/2\epsilon}_{\GH}[( r^{-1}X,x),(Y,y)] < 4\epsilon^2$.
		\end{enumerate}
		In particular, $\lim_{r\rightarrow0}\xi_{X,Y}(x,y,r) = 0$ if and only if
		$\lim_{r\rightarrow0} \D_\GH^{R}[(r^{-1}X,x),(Y,y)] = 0$ for every $R > 0$.
	\end{lemma}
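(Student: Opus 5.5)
We will prove both items by passing between the two kinds of approximations through a fixed realization in a common space, and use the cone structure of $Y$ to rescale balls. The key observation is that, since $(Y,y)$ is a metric cone, a $\lambda$-similarity $\psi_\lambda$ identifies $(\lambda Y,y)$ with $(Y,y)$ isometrically. Hence closeness of $(r^{-1}X,x)$ to $(Y,y)$ on the ball of radius $1/\epsilon$ is the same as closeness of $(X,x)$ to $(rY,y)\cong(Y,y)$ on the ball of radius $r/\epsilon$, with errors scaled by $r$. So after rescaling, both statements compare one of the quantities $\D_\GH^{R}$ and $\xi$ with the other at the same scale. Throughout we will use that $B_Y(y,s)$ is mapped onto $B_Y(y,\lambda s)$ by $\psi_\lambda$; this holds because $\psi_\lambda$ is a pointed bijective similarity.

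For item \eqref{item:DGH-implies-xi}, choose isometric embeddings of $(r^{-1}X,x)$ and $(Y,y)$ into a common $(Z,z)$ with $\D^{z,1/\epsilon}<\epsilon^2$, i.e. each point of either set in $B_Z(z,1/\epsilon)$ lies within $\epsilon^{2}/\epsilon=\epsilon$ of the other set.

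1. Define $\phi$ on $B_{r^{-1}X}(x,1/\epsilon)$ by sending $w$ to a nearly closest point of $Y$. To make $\phi$ Borel, use separability: take a countable dense set and a measurable selection, as in Lemma \ref{l:GH-isom}. Set $\phi(x)=y$.
2. The image point may lie slightly outside $B_Y(y,1/\epsilon)$, at distance up to $1/\epsilon+\epsilon$. Correct this by composing with a radial retraction along the Euclidean rays guaranteed by property \eqref{item:euc-rays}, which moves points by at most $\epsilon$.
3. Check the distortion and density bounds. Each is a sum of a few errors of size $\epsilon$: nearest-point error, radial correction, and the fact that points of $B_Y(y,1/\epsilon)$ near the boundary have $X$-neighbors possibly just outside the ball (handled symmetrically with the rays).
4. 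Rescale by $r$: the map becomes a map $B_X(x,r/\epsilon)\to B_Y(y,r/\epsilon)$, after applying $\psi_r$, with error about $2\epsilon r$–$3\epsilon r$. Relative to the radius $r/\epsilon$, this gives $\xi<3\epsilon^2$.

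For item \eqref{item:xi-implies-DGH}, take an $\eta (r/\epsilon)$-pGHA $\phi$ with $\eta<\epsilon^2$, and rescale by $r^{-1}$ and $\psi_{1/r}$. This gives a map $B_{r^{-1}X}(x,1/\epsilon)\to B_Y(y,1/\epsilon)$ with additive error $<\epsilon$. The plan is the standard construction of a metric on the disjoint union: for $w\in X$ and $v\in Y$ set
\[d(w,v)=\inf_{w'}\bigl(d(w,w')+\epsilon+d(\phi(w'),v)\bigr),\]
with the infimum over $w'$ in the ball. This is a genuine metric because the distortion is at most $\epsilon$; we verify the triangle inequality as usual. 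Under it every point of either ball is within $\epsilon$ of the other set. Points in the ball of radius $1/2\epsilon$ have their near-neighbors still inside the ball of radius $1/\epsilon$, so the excesses over the full sets on $B(z,1/2\epsilon)$ are at most $2\epsilon$. Dividing by the radius $1/2\epsilon$ gives $\D^{1/2\epsilon}<4\epsilon^2$. The basepoints are glued since $\phi(x)=y$, with the $+\epsilon$ term adjusted to $d(x,y)=0$ by identifying them. This identification is legitimate because $d(\phi(x),y)=0$ lets the triangle inequality survive after the adjustment, at a cost absorbed into the constants.

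The "in particular" statement then follows from the two items combined with the weak monotonicity of $\D_\GH$ and the corresponding monotonicity of $\xi$ under a change of radius (using the cone similarities again). The main obstacle is the bookkeeping of balls in item \eqref{item:DGH-implies-xi}: the pGHA must map into the exact ball $B_Y(y,r/\epsilon)$ and be $\epsilon$-dense there. This is exactly where the Euclidean-ray property of cones is needed, to push points radially inward with controlled error.
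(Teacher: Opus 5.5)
Your outline has the same skeleton as the paper's proof. In item (1) you get an approximate isometry from small $\D_\GH$, rescale by an $r$-similarity, and use the rays near the boundary. In item (2) you rescale the pGHA by an $r^{-1}$-similarity and turn it back into a common embedding. The difference is that the paper gets both conversions from Lemma \ref{l:GH-isom}, while you rebuild them by hand. Working directly in a common space realizing $\D^{z,1/\epsilon}<\epsilon^2$ is cleaner: it avoids passing through $d_\pGH$ and the restriction $\epsilon<1/2$ of Lemma \ref{l:GH-isom}. You also catch a point the paper passes over: the map must take values in $B_Y(y,r/\epsilon)$, whereas the map of Lemma \ref{l:GH-isom}(1) is only guaranteed to land in $B_Y(y,1/\epsilon+2\epsilon)$ before rescaling.

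\textbf{The gap in item (1).} Your radial retraction is exactly where the constant $3$ is lost, and your tally does not account for it. If $d(w,\phi(w))\le\eta'<\epsilon$, then $d_Y(y,\phi(w))\le 1/\epsilon+\eta'$. So the retraction $\rho$ moves $\phi(w)$ by up to $\eta'$, and $d(w,\rho\phi(w))\le 2\eta'$. The distortion involves two points, so the triangle inequality gives only $|d_Y(\rho\phi(w_1),\rho\phi(w_2))-d(w_1,w_2)|\le 4\eta'$. Your list (nearest-point error, radial correction, boundary effect) counts each error once, but the first two occur for both points. The density bound is also $4\eta'$: for $u$ near $\partial B_Y(y,1/\epsilon)$, the inward move along the ray costs $\eta'$ on top of the $3\eta'$ needed for interior points. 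After rescaling, your construction yields $\xi_{X,Y}(x,y,r/\epsilon)<4\epsilon^2$, not $<3\epsilon^2$. This is harmless for the final equivalence and for the use in Lemma \ref{c:apptan-normed-spaces-exist}, where only $O(\epsilon^2)$ matters. But it does not prove item (1) as stated, and you would need a sharper construction. The paper's argument does not supply one either: once its map is forced into the ball, it loses even more.

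\textbf{The basepoint step in item (2).} Your justification here is incorrect. With your gluing $d(w,v)=\inf_{a}\bigl(d(w,a)+\epsilon+d_Y(\phi(a),v)\bigr)$ and distortion $\theta<\epsilon$, one has $d(w,y)\ge d(w,x)+\epsilon-\theta>d(w,x)$. So simply declaring $d(x,y)=0$ violates the triangle inequality. The right move is to pass to the quotient pseudometric $d'(p,q)=\min\{d(p,q),\,d(p,x)+d(y,q),\,d(p,y)+d(x,q)\}$. The estimate above and its mirror $d(x,v)\ge d_Y(y,v)+\epsilon-\theta$ show that $d'$ shortens no distance inside $r^{-1}X$ or inside $Y$. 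So both still embed isometrically with $x$ and $y$ identified. Since $d'\le d$, the excess bounds only improve, and there is no cost to absorb.

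A point of $B_Y(y,1/\epsilon)$ is within $\epsilon+\theta$ of $r^{-1}X$, not $\epsilon$, because you first need an $a$ with $d_Y(\phi(a),v)\le\theta$. This still gives $\D_\GH^{1/2\epsilon}<4\epsilon^2$.

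Finally, the last statement needs no monotonicity of $\xi$. As $r$ runs over small radii, so does $r/\epsilon$, so item (1) bounds $\limsup_{\rho\to 0}\xi_{X,Y}(x,y,\rho)$ directly. The other direction uses only the monotonicity of $\D_\GH$, as in the paper.
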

	
	\begin{proof}
		First, suppose $\D_\GH^{1/\epsilon}[(r^{-1}X,x),(Y,y)] < \epsilon^2$. Then $d_{\pGH}((r^{-1}X,x), (Y,y)) < \epsilon$, and we can apply Lemma \ref{l:GH-isom}(1) to get a mapping $f:B_{r^{-1}X}(x,1/\epsilon)\rightarrow Y$ satisfying the properties listed in the lemma. Now, $B_{r^{-1}X}(x,1/\epsilon) = B_X(x,r/\epsilon)$ as sets, and Definition \ref{def:metric-cones}(2) implies there exists an $r$-similarity $\psi:B_Y(y,1/\epsilon)\rightarrow B_Y(y,r/\epsilon)$. We claim that $\psi\circ f:B_X(x,r/\epsilon)\rightarrow B_Y(y,r/\epsilon)$ is a $3\epsilon^2 r$-pGHA. Indeed, for any $x',x''\in B_X(x,r/\epsilon)$,
		\begin{align*}
			|d_Y(\psi(f(x')),\psi(f(x''))) - d_X(x',x'')| &= r|d_Y(f(x'),f(x'')) -  r^{-1}d_X(x',x'')|\\
			&\leq 2\epsilon r.
		\end{align*}
		Now, Definition \ref{def:metric-cones}(1) implies that for any $\tilde{u}\in B_Y(y,1/\epsilon)$ there exists $\tilde{w}\in B_Y(y,1/\epsilon-\epsilon)$ with $|\tilde{u}-\tilde{w}| = \epsilon$. This means
		\begin{align*}
			\sup_{u\in B_Y(y,r/\epsilon)}\dist(u,\psi(f(B_X(x,r))) &= r\sup_{\td{u}\in B_Y(y,1/\epsilon)}\dist(\td{u}, f(B_{(\epsilon r)^{-1}X}(x,1/\epsilon))) \\
			&\leq \epsilon r + r\sup_{\td{w}\in B_Y(y,1/\epsilon-\epsilon)}\!\!\!\dist(\td{w}, f(B_{(\epsilon r)^{-1}X}(x,1/\epsilon)))\\
			&\leq 3\epsilon r.
		\end{align*}
		Hence $\xi_{X,Y}(x,y,r/\epsilon) \leq 3\epsilon^2$.
		
		Now, suppose that $\xi_{X,Y}(x,y,r/\epsilon) < \epsilon^2$. Let $\phi:B_X(x,r/\epsilon)\rightarrow B_Y(y,r/\epsilon)$ be an $\epsilon^2 r/\epsilon = \epsilon r$-pGHA, and let $\psi:B_Y(y,r/\epsilon)\rightarrow B_Y(y,1/\epsilon)$ be an $r^{-1}$-similarity. We claim that $\psi\circ\phi:B_{ r^{-1}X}(x,1/\epsilon)\rightarrow B_Y(y,1/\epsilon)$ is an $\epsilon$-pGHA. First, let $z,z'\in B_{r^{-1}X}(x,1/\epsilon) = B_X(x,r/\epsilon)$. Then,
		\begin{equation*}
			|d_Y(\psi\circ\phi(z),\psi\circ\phi(z')) -  r^{-1}d_X(z,z')| = r^{-1}|d_Y(\phi(z), \phi(z')) - d_X(z,z')|\leq \epsilon
		\end{equation*}
		Second, let $y'\in B_{Y}(y,1/\epsilon)$ and observe that
		\begin{equation*}
			\dist_Y(y',\psi\circ\phi(B_{r^{-1}X}(x,1/\epsilon))) = r^{-1}\dist_Y(\psi^{-1}(y'),\phi(B_X(x,r/\epsilon))) \leq \epsilon.
		\end{equation*}
		This verifies that $\psi\circ\phi$ is an $\epsilon$-pGHA. In particular, $\psi\circ\phi$ is an $\epsilon$-isometry. Using Lemma \ref{l:GH-isom}(2), we get
		\[ \D^{1/2\epsilon}_\GH[(r^{-1}X,x),(Y,y)] \leq 4\epsilon^2.\]
		This verifies the first two statements of the lemma. The final statement of the Lemma follows from these statements using the monotonicity of $\D_\GH$ for the forward direction.
	\end{proof}
	
	\subsection{Gromov-Hausdorff versus Attouch-Wets tangents}
	We end this section with a theorem that gives the precise relationship between the Gromov-Hausdorff tangents and the Attouch-Wets tangents to subsets of Euclidean spaces.

	\begin{theorem}\label{t:two-tangents}
		Let $E\in\mathfrak{C}(\R^n)$ and let $x\in E$.
		\begin{enumerate}
			\item If $T\in\Tan_{\AW}(E,x)$, then $(T,{\bf 0})\in\Tan_{\GH}(E,x)$.
			\item If $[(Y,y)]\in\Tan_{\GH}(E,x)$, then there exists $T\in\Tan_{\AW}(E,x)$ such that there exists a pointed isometry between $(T,{\bf 0})$ and $(Y,y)$.
		\end{enumerate}
	\end{theorem}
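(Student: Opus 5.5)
For part (1), the plan is to use the translated and dilated sets themselves as the common ambient space. If $T\in\TanAW(E,x)$, pick $r_i\to 0$ with $r_i^{-1}(E-x)\to T$ in the Attouch-Wets topology. Lemma \ref{lem:AW-ph} then gives $\lim_i \D^{\bz,R}[r_i^{-1}(E-x),T]=0$ for every $R>0$.

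The map $y\mapsto r_i^{-1}(y-x)$ is a pointed isometry from $(r_i^{-1}E,x)$ onto $(r_i^{-1}(E-x),\bz)$ with the Euclidean metric. Inclusion of $T$ into $\R^n$ is a pointed isometric embedding $(T,\bz)\to(\R^n,\bz)$. Using these two embeddings in the definition of $\D^R_\GH$ yields
\[\D^R_\GH[(r_i^{-1}E,x),(T,\bz)]\le \D^{\bz,R}[r_i^{-1}(E-x),T]\to 0\]
for all $R>0$. Since $T$ is closed in $\R^n$, it is complete, so Lemma \ref{l:dgh-basics} gives $[(T,\bz)]\in\TanGH(E,x)$.

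For part (2), fix $[(Y,y)]\in\TanGH(E,x)$ with scales $r_i\to0$, so that $d_\pGH((r_i^{-1}E,x),(Y,y))\to 0$. The sets $A_i:=r_i^{-1}(E-x)$ lie in $\mathfrak{C}(\R^n,\bz)$. By Lemma \ref{lem:AW-char}, after passing to a subsequence, $A_i\to T$ in the Attouch-Wets topology for some $T\in\mathfrak{C}(\R^n,\bz)$, so $T\in\TanAW(E,x)$. By part (1), $(A_i,\bz)\to(T,\bz)$ in the pointed Gromov-Hausdorff sense along this subsequence. Since $(A_i,\bz)$ is pointed isometric to $(r_i^{-1}E,x)$, it also converges to $(Y,y)$. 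It remains to show that two pointed GH limits of the same sequence are pointed isometric.

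That uniqueness step is the main obstacle, because $d_\pGH$ vanishing only determines isometry classes under suitable hypotheses. The plan is to exploit that $T$ is a closed subset of $\R^n$, hence proper, and compose approximate isometries. By the triangle inequality for $d_\pGH$ (or by composing the maps from Lemma \ref{l:GH-isom}), we get $d_\pGH((T,\bz),(Y,y))=0$. We then build an isometry by hand.

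\begin{itemize}
\item For each $k$, Lemma \ref{l:GH-isom}(1) provides $2/k$-isometries $g_k:(Y,y)\to(T,\bz)$ and $h_k:(T,\bz)\to(Y,y)$. Separability of $Y$ holds, since $Y$ is a limit of separable spaces, or can be arranged by restricting to a separable completion.
\item Fix a countable dense set $D\subset Y$. Because each $g_k(D\cap B_Y(y,R))$ lies in a compact ball of $T$, a diagonal argument gives a subsequence along which $g_k(d)\to g(d)$ for every $d\in D$. The limit $g$ is distance preserving on $D$, so it extends to an isometric embedding $g:Y\to T$ using completeness of $T$.
\item For surjectivity, every $t\in T\cap B(\bz,R)$ is within $4/k$ of $g_k(B_Y(y,R+1))$ for large $k$. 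Pick $y_k\in B_Y(y,R+1)$ with $|g_k(y_k)-t|\le 4/k$.
\item Show that $(y_k)$ has a Cauchy subsequence, so that $t=g(\lim y_k)$. This uses completeness of $Y$ together with total boundedness of balls in $Y$. Total boundedness is inherited from $T$ through the $h_k$: an $\eta$-net of $B_T(\bz,R)$ pulls back to a $3\eta$-net of $B_Y(y,R)$.
\end{itemize}

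This shows $g$ is a surjective pointed isometry $(Y,y)\to(T,\bz)$, completing the proof.
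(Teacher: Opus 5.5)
Your proof is correct. Part (1) matches the paper: the paper bounds $d_{\pGH}$ by $H^{\bz}[r_i^{-1}(E-x),T]$ and cites Lemma \ref{lem:AW-ph}, while you bound $\D^{R}_{\GH}$ and cite Lemma \ref{l:dgh-basics}.

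Part (2) is decomposed differently. You first extract an Attouch--Wets limit $T$. You then use part (1) and a triangle inequality to get $d_{\pGH}((T,\bz),(Y,y))=0$. Finally you prove a general uniqueness statement (pGH distance zero to a proper space gives a pointed isometry), using approximate isometries $g_k$ and $h_k$ in both directions. The paper never compares $Y$ with $T$ abstractly. It takes $2^{-i}$-isometries $\iota_i:(Y,y)\to(E_i,\bz)$ straight into the rescaled sets $E_i=r_i^{-1}(E-x)\subset\R^n$. The bound $|\iota_i(p)|\le d_Y(p,y)+1$ and compactness in $\R^n$ then drive the Arzel\`a--Ascoli diagonal argument. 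The convergence $E_i\to T$ is used implicitly to place the limit map in $T$ and to cover $T$.

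What each route buys:
\begin{itemize}
\item The paper's route needs no triangle inequality for $d_{\pGH}$. That inequality is not proved in the paper. Your argument needs either a gluing construction (as in Lemma \ref{l:extension-for-dgh}) or a composition of the maps from Lemma \ref{l:GH-isom} with slightly shrinking radii.
\item Your route makes surjectivity and properness of $Y$ explicit through the maps $h_k$. The paper leaves these to \emph{following the proof of Arzel\`a--Ascoli} and to the unproved claim that $Y$ is doubling.
\end{itemize}

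Three minor points:
\begin{itemize}
\item The phrase \emph{restricting to a separable completion} has no clear meaning. Your other justification does work: for large $i$, each ball of $Y$ lies within a small distance of the separable set $A_i$ in a common space, which gives countable nets.
\item Separability only matters for Borel measurability in Lemma \ref{l:GH-isom}, which plays no role here.
\item In the total boundedness step, take a finite net of a slightly larger ball $B_T(\bz,R+1)$ and push it forward by $h_k$; it is an image, not a pull-back.
\end{itemize}
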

	
	\begin{proof}
		First, suppose that $T\in\Tan_{\AW}(E,x)$.
		The mappings $\iota_1:E\rightarrow V$ and $\iota_2:T\rightarrow V$ given by $\iota_1(y) = \frac{y-x}{r}$ and $\iota_2(z) = z$ give isometric embeddings of $(E,r^{-1}|\cdot|)$ and $(T,|\cdot|)$ into $\R^n$ with $\iota_1(x) = {\bf 0}$ and $\iota_2({\bf 0}) = {\bf 0}$. This means we can compute
		\begin{equation}\label{e:ph-of-awt}
			d_{\pGH}((E,r^{-1}|\cdot|,x),(T,|\cdot|,{\bf 0})) \leq H^{\bf 0}(\iota_1(E), \iota_2(T)) = H^{\bf 0}\left(r^{-1}(E-x), T\right).
		\end{equation}
		Since $T\in\Tan_{\AW}(E,x)$, Lemma \ref{lem:AW-ph} implies that there exists a sequence $r_i\rightarrow 0$ such that the right hand side of \eqref{e:ph-of-awt} with $r$ replaced by $r_i$ goes to zero as $i\rightarrow\infty$. Then $r_i$ is such that $\lim_{i\rightarrow\infty}d_{\pGH}((E,r_i^{-1}|\cdot|,x),(T,|\cdot|,{\bf 0}))= 0$, showing that $[(T,{\bf 0})]\in\Tan_{\GH}(E,x)$.
		
		Now, let $[(Y,y)]\in\Tan_{\GH}(E,x)$. By Lemma \ref{l:GH-isom}(1), there exists $r_i\rightarrow 0$ and $2^{-i}$-isometries $\iota_i: (Y,y) \rightarrow (r_i^{-1}(E-x),\bz)$. Set $E_i = r_i^{-1}(E-x)$. By Lemma \ref{lem:AW-char}, we can pass to a subsequence and further assume that $(E_i)_{i\in\N}$ converge in the Attouch-Wets topology in $\R^n$ to a limit set $T\in\mathfrak{C}(\R^n,{\bf 0})$. Therefore, $T\in\Tan_{\AW}(E,x)$. Note that $Y$ is separable since it is doubling (see \cite[Proposition 6.1.5]{MT10}) and that for any $x\in Y$, notice that $|\iota_i(x) - \bz| = |\iota_i(x) - \iota_i(y)| \leq d_Y(x,y) + 1$ for all $i\in\N$ so that
		$$ \overline{\{ \iota_i(x) \}_{i\in\N}} \subset B^n(\bz, d_Y(x,y)+1). $$
		The above set is compact and, following the proof of the Arzel\'a-Ascoli Theorem, we obtain  a pointed isometry between $(Y,y)$ and $(T,\bz)$.
	\end{proof}
	
\section{Sobolev maps into metric spaces}\label{sec:Sobolev}
In this section we discuss Sobolev maps in metric spaces. We start by introducing a class of metric space-valued Sobolev spaces. We will only present the specific cases of definitions and results that we need in this paper. For more details and more general statements, we suggest the interested reader consult \cite[Section 7]{HKST15} and \cite{HKST01}. For the rest of this section, we fix a separable metric space $X$.
By the Kuratowski Embedding Theorem,  $X$ isometrically embeds into a Banach space $V$ (e.g. $V=\ell_\infty$), which we fix for the rest of the paper. For convenience, we assume for the rest that $X\subset V$.

\begin{definition}[Metric space-valued Sobolev maps]\label{def:metricSobolev}
	Let $1 \leq p \leq \infty$ and let $\Omega\subset\R^n$ be a Borel set. We say that $u$ is in the \textit{Newtonian-Sobolev space} $\in N^{1,p}(\Omega,X)$ of order $p$ on $\Omega$ with target $X$, if $u\in L^p(\Omega,V)$ and if there exists a Borel function $\rho:\Omega\rightarrow[0,\infty]$ so that $\rho\in L^p(\Omega)$ and
	\begin{equation}\label{e:up-grad}
		d_X(u(\gamma(a)) , u(\gamma(b))) \leq \int_\gamma\rho \, ds
	\end{equation}
	for $p$-a.e. rectifiable curve $\gamma:[a,b]\rightarrow \Omega$.
	
	Here the term``$p$-almost every curve'' refers to the $p$-modulus of curve families; see \cite[Section 5]{HKST15} for $p\in (1,\infty)$ and \cite[Definition 5.5]{Durand} for $p=\infty$.
\end{definition}

In similar fashion we define the Newtonian-Sobolev space $N^{1,p}(\Omega,V)$ replacing $X$ by its superset $V$. Clearly,
\[ N^{1,p}(\Omega,X) \subset N^{1,p}(\Omega,V).\]
It is known that

The following theorem is our main goal for the rest of this section.

\begin{theorem}\label{t:sob-low-reg}
	Let $n < p \leq \infty$. If $f\in N^{1,p}( \mathbb{B}^n,X)$ is continuous, then $f( \mathbb{B}^n)$ is $n$-rectifiable and has finite $n$-packing content.
\end{theorem}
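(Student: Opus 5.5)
The plan is to derive both conclusions from a Morrey-type oscillation estimate for $f$, applied on balls intersected with $\mathbb{B}^n$. Fix an upper gradient $\rho\in L^p(\mathbb{B}^n)$ as in Definition \ref{def:metricSobolev}, and view $f$ as $V$-valued with $X\subset V$.

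\textbf{Step 1: the local estimate.} For every $z\in\mathbb{B}^n$ and $0<r\le 2$, with $U=B^n(z,r)\cap\mathbb{B}^n$, we aim to prove
\[ \diam f(U)\le C(n,p)\, r^{1-n/p}\Big(\int_{U}\rho^p\Big)^{1/p}. \]
The sets $U$ are convex and contain a ball of radius comparable to $r$, uniformly in $z$ and $r$, so they are uniformly John domains. The usual proof of Morrey's inequality then goes through on $U$:
\begin{enumerate}
\item Integrate the upper gradient inequality along segments inside $U$. The family of segments is $p$-modulus generic after averaging over endpoints, by Fuglede's lemma.
\item This gives $d_X(f(x),f(y))\lesssim\int_U \rho(w)\big(|x-w|^{1-n}+|y-w|^{1-n}\big)\,dw$ for a.e.\ $x,y\in U$.
\item Apply H\"older's inequality with $p>n$, and use continuity of $f$ to pass from a.e.\ points to all points.
\end{enumerate}
For $p=\infty$ the map $f$ is simply Lipschitz on the convex set $\mathbb{B}^n$, and everything below is easier. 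I expect this step to be the main technical obstacle: one has to check that the modulus-exceptional curve family does not interfere, and that the constant is uniform when $z$ lies near $\partial\mathbb{B}^n$.

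\textbf{Step 2: rectifiability.} Fix $n<q<p$. Running the same argument with $\rho^q$ gives the pointwise bound
\[ d(f(x),f(y))\lesssim|x-y|\big(M\rho^q(x)^{1/q}+M\rho^q(y)^{1/q}\big), \]
where $M$ is the maximal function relative to $\mathbb{B}^n$. Hence $f$ is Lipschitz on each set $E_k=\{M\rho^q\le k^q\}$. Since $\rho^q\in L^{p/q}$ and $p/q>1$, the maximal function $M\rho^q$ is finite a.e., so $\mathbb{B}^n\setminus\bigcup_k E_k$ is a null set $N$.

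It remains to show $\Haus^n(f(N))=0$, i.e.\ the Lusin (N) property. Cover $N$ by an open set $O$ of small measure, and cover $N$ by balls $B_j\subset O$ of bounded overlap (Besicovitch or Whitney). Step 1 gives $\diam f(B_j)^n\lesssim r_j^{n(1-n/p)}(\int_{B_j}\rho^p)^{n/p}$. H\"older's inequality for sums with exponents $p/(p-n)$ and $p/n$ then yields
\[ \Haus^n_\infty(f(N))\lesssim |O|^{1-n/p}\|\rho\|_{L^p(O)}^{n}, \]
which tends to $0$ as $|O|\to 0$. Applying the same bound to arbitrarily fine covers shows $\Haus^n(f(N))=0$. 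Thus $f(\mathbb{B}^n)$ is $n$-rectifiable; taking $O=\mathbb{B}^n$, the same computation also gives $\Haus^n(f(\mathbb{B}^n))<\infty$.

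\textbf{Step 3: finite packing content.} Let $B_X(y_i,s_i)$ be pairwise disjoint balls centered in $f(\mathbb{B}^n)$ with $s_i\le\diam f(\mathbb{B}^n)$, and pick $x_i$ with $f(x_i)=y_i$.
\begin{enumerate}
\item Set $r_i=\sup\{r\in(0,2] : \diam f(B^n(x_i,r)\cap\mathbb{B}^n)<s_i/2\}$. Continuity of $f$ gives $r_i>0$. At $r=2$ the image is all of $f(\mathbb{B}^n)$, whose diameter is at least $s_i$, so $r_i\le 2$.
\item Monotonicity of the diameter in $r$ gives $\diam f(B^n(x_i,2r_i)\cap\mathbb{B}^n)\ge s_i/2$. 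This way of choosing $r_i$ sidesteps the possible jumps of $r\mapsto\diam f(B^n(x_i,r)\cap\mathbb{B}^n)$.
\item For $r<r_i$ we have $f(B^n(x_i,r)\cap\mathbb{B}^n)\subset B_X(y_i,s_i/2)$. The image balls are disjoint, so the open sets $B^n(x_i,r_i)\cap\mathbb{B}^n$ are pairwise disjoint. Consequently the sets $U_i=B^n(x_i,r_i/2)\cap\mathbb{B}^n$ are pairwise disjoint, $\sum_i r_i^n\lesssim 1$, and $\sum_i\int_{U_i}\rho^p\le\|\rho\|_p^p$.
\end{enumerate}
The catch is that Step 1 on $B^n(x_i,2r_i)$ involves overlapping domains. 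To avoid this, I will apply Step 1 at radius $r_i/2$ and compare scales: either $\diam f(U_i)\gtrsim s_i$, or use a chain of sub-balls of $B^n(x_i,2r_i)$ reaching the far point. More simply, I will redefine $r_i$ with the threshold $s_i/8$ and use $B^n(x_i,r)\subset B^n(x_i,2r)$ monotonicity so that the estimate lands on disjoint sets. In either form we get $s_i\lesssim r_i^{1-n/p}(\int_{U_i}\rho^p)^{1/p}$.

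Raising this to the power $n$, summing over $i$, and applying H\"older's inequality as in Step 2 gives
\[ \sum_i s_i^n\lesssim\Big(\sum_i r_i^n\Big)^{1-n/p}\|\rho\|_p^{n}<\infty. \]
The bound is uniform over all admissible disjoint families, which is exactly finite $n$-packing content in the sense of Definition \ref{def:goodpack}.
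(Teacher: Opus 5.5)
\textbf{Gap in Step 3.} Steps 1 and 2 are sound. Together they give a genuinely different, more self-contained route than the paper's. The paper gets rectifiability from the Haj\l asz characterization plus the cited Lusin condition (N), Theorem~\ref{t:LCN}. It gets finite packing content by extending $f$ via Lemma~\ref{t:sob-ext} and proving lower $n$-regularity of $D^n\delta_{\bz}+\tilde f_*((1+g^p)\,dx)$ through the capacity bound \eqref{e:cap-leb}.

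The gap is at the decisive point of Step 3. You assert $s_i\lesssim r_i^{1-n/p}(\int_{U_i}\rho^p)^{1/p}$ with $U_i=B^n(x_i,r_i/2)\cap\mathbb{B}^n$, and this is false in general. Nothing prevents $f$ from being constant on $B^n(x_i,r_i/2)$, with all growth of the image diameter occurring in the annulus $r_i/2<|x-x_i|<r_i$. Taking $\rho=|\nabla f|$, the right side then vanishes while $s_i>0$.

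The alternatives you sketch do not close this either. A chain of sub-balls of $B^n(x_i,2r_i)$ leaves $B^n(x_i,r_i)$, where the domains may overlap. If you lower the threshold to $s_i/8$ and then double the radius, the doubled radius can still exceed the radius up to which the preimages are known to be disjoint. As written, the summation in Step 3 is unjustified.

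\textbf{How to repair it.} No doubling is needed. Since $f$ is uniformly continuous on $\mathbb{B}^n$, and the convex sets $B^n(x_i,r)\cap\mathbb{B}^n$ and $B^n(x_i,r')\cap\mathbb{B}^n$ are within Hausdorff distance $|r-r'|$, the function $\phi_i(r)=\diam f(B^n(x_i,r)\cap\mathbb{B}^n)$ is continuous. So there are no jumps to sidestep. With your definition of $r_i$ one gets $r_i<2$ and $\phi_i(r_i)=s_i/2$. Consequently $W_i=\{x\in\mathbb{B}^n:|x-x_i|<r_i\}$ satisfies $\diam f(W_i)=s_i/2$. Alternatively, apply Step 1 on $B^n(x_i,r)\cap\mathbb{B}^n$ for $r\downarrow r_i$ and pass to the limit by dominated convergence.

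You have already shown the $W_i$ are pairwise disjoint. Each $W_i$ is convex and contains a ball of radius $r_i/4$. Step 1 on $W_i$ therefore gives $s_i\lesssim r_i^{1-n/p}(\int_{W_i}\rho^p)^{1/p}$, and your H\"older summation finishes the proof.

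\textbf{Comparison after the repair.} Your argument becomes a valid alternative to the paper's. By Young's inequality it even shows $s^n\lesssim\nu(B_X(y,s))$ for $\nu=f_*((1+\rho^p)\,dx)$, which is the lower regularity the paper derives from capacity. Here Morrey's inequality at the critical radius absorbs the boundary effects, so neither the extension operator nor the Dirac mass is needed. Step 2 also proves Lusin's condition (N) rather than citing it.
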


We start by giving a simple proof of Theorem \ref{t:sob-low-reg} in the special case of Lipschitz maps which correspond to the case $p=\infty$ \cite[Corollary 5.24]{Durand}.

\begin{remark}\label{l:pushfwd-lr}
	Let $f: \mathbb{B}^n\rightarrow X$ be $L$-Lipschitz. By definition, $f( \mathbb{B}^n)$ is $n$-rectifiable. Define the push-forward measure $\nu_f = f_*(\scL^n)$ by
	\[\nu_f(A) = \scL^n(f^{-1}(A)) \quad \text{for any Borel $A\subset X$.}\]
	We claim that $\nu_f$ is lower $n$-regular. Indeed, let $y = f(x) \in f(\mathbb{B}^n)$ and $r > 0$. Then $f(B^n(x,r/L)) \subset B_X(f(x), r)$ so that
	\[ \nu_f(B_X(y,r)) \geq \scL^n(B^n(x,r/L)\cap \mathbb{B}^n) \gtrsim_nL^{-n}r^n.\]
	By Remark \ref{rem:n-pack}, $f( \mathbb{B}^n)$ has finite $n$-packing content.
\end{remark}

When $p<\infty$, it will be convenient for us to work with two slightly different notions of Sobolev spaces. The defining property of Reshetnyak-Sobolev spaces in the following proposition will be useful in constructing a lower $n$-regular measure on Sobolev images.

\begin{proposition}\label{t:sob-equiv}
	Let $1<p<\infty$ and $f :  \mathbb{B}^n \to V$ be a Borel function with $f( \mathbb{B}^n)$ being separable. The following are equivalent:
	\begin{enumerate}
		\item $f$ is in the \emph{Newtonian-Sobolev} space $N^{1,p}( \mathbb{B}^n,V)$;
		\item\label{def:resh-sob} $f$ is in the \emph{Reshetnyak-Sobolev} space $W^{1,p}( \mathbb{B}^n,V)$: there exists $g\in L^p(\mathbb{B}^n)$ with $g\geq 0$ such that for any 1-Lipschitz function $\varphi:V\rightarrow\R$, $\varphi\circ f\in W^{1,p}( \mathbb{B}^n,\R)$, and $|\nabla (\varphi\circ f)| \leq g$ holds $\scL^n$-a.e. in $ \mathbb{B}^n$;
		\item $f$ is in the \emph{Haj{\l}asz-Sobolev space}  $M^{1,p}( \mathbb{B}^n,V)$: there exists $g\in L^p( \mathbb{B}^n)$ with $g \geq 0$ such that for all $x,y\in  \mathbb{B}^n$,
		\begin{equation*}
			d_X(f(x),f(y)) \leq |x-y||g(x) + g(y)|.
		\end{equation*}
	\end{enumerate}
\end{proposition}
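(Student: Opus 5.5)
The plan is to reduce the vector-valued statement to real-valued Sobolev theory by composing with 1-Lipschitz functions, and to recover the metric $d_V$ on the separable set $f(\mathbb{B}^n)$ as a supremum over countably many such compositions. Throughout, equivalence is understood up to choosing a suitable representative of $f$ on an $\scL^n$-null set, as is standard for Newtonian classes. I would prove $(1)\Rightarrow(2)\Rightarrow(3)\Rightarrow(1)$.

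For $(1)\Rightarrow(2)$, let $\rho\in L^p$ be an upper gradient of $f$ along $p$-a.e. curve, and let $\varphi:V\to\R$ be 1-Lipschitz. Then $\rho$ is also a $p$-weak upper gradient of $\varphi\circ f$, so $\varphi\circ f\in N^{1,p}(\mathbb{B}^n,\R)$. By Shanmugalingam's identification $N^{1,p}(\mathbb{B}^n,\R)=W^{1,p}(\mathbb{B}^n,\R)$, whose minimal $p$-weak upper gradient is $|\nabla u|$ a.e., we get $|\nabla(\varphi\circ f)|\le \rho$ a.e. So $g=\rho$ works, and it is independent of $\varphi$. This direction is also recorded in \cite[Section 7.1]{HKST15}.

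For $(2)\Rightarrow(3)$, I would use the pointwise estimate valid for real $W^{1,p}$ functions on the ball (a John domain): for Lebesgue points $x,y$ of $u$,
\[ |u(x)-u(y)|\le C_n|x-y|\big(M|\nabla u|(x)+M|\nabla u|(y)\big), \]
where $M$ is the Hardy--Littlewood maximal operator restricted to $\mathbb{B}^n$. The steps are as follows.
\begin{enumerate}
\item Choose a countable dense set $\{z_k\}\subset f(\mathbb{B}^n)$ and set $\varphi_k(z)=d_V(z,z_k)$. These functions are 1-Lipschitz and satisfy $d_V(a,b)=\sup_k|\varphi_k(a)-\varphi_k(b)|$ for all $a,b\in f(\mathbb{B}^n)$.
\item Apply the estimate to each $u_k=\varphi_k\circ f$, using $|\nabla u_k|\le g$. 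This gives, off a null set $N$ (the countable union of the non-Lebesgue sets),
\[ d_V(f(x),f(y))\le C_n|x-y|\big(Mg(x)+Mg(y)\big). \]
\item Set $\tilde g=C_nMg$ on $\mathbb{B}^n\setminus N$ and $\tilde g=+\infty$ on $N$. Then the inequality holds for all $x,y$, and $\tilde g\in L^p$ by the maximal theorem since $p>1$.
\end{enumerate}

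For $(3)\Rightarrow(1)$, I would go through $(2)$ and then invoke \cite[Theorem 7.1.20]{HKST15}.
\begin{enumerate}
\item For 1-Lipschitz $\varphi$, the function $\varphi\circ f$ satisfies the real Hajłasz inequality with the same $g$. By Hajłasz's theorem, $\varphi\circ f\in W^{1,p}$ with $|\nabla(\varphi\circ f)|\le 4g$ a.e.; this gives $(2)$.
\item The equality of Reshetnyak and Newtonian classes for separable Banach-valued maps, \cite[Theorem 7.1.20]{HKST15}, then yields a representative in $N^{1,p}(\mathbb{B}^n,V)$.
\item Alternatively, work directly: using the Hajłasz inequality and Fuglede's lemma, $Cg$ is an upper gradient along $p$-a.e. curve for a suitable representative.
\end{enumerate}

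The main obstacle I expect is this last step, namely passing from a.e. statements (Sobolev derivatives, Hajłasz inequality off null sets) to the curve-wise statement required in Definition \ref{def:metricSobolev}. This involves selecting the right representative and checking that exceptional null sets are met by only a $p$-modulus-null family of curves. I would handle it by citing \cite{HKST15} rather than reproving it. The other delicate point is uniformity in $\varphi$ in $(2)\Rightarrow(3)$, which the countable family $\{\varphi_k\}$ resolves.
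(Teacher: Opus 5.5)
Your proposal is correct, but it is organized differently from the paper's proof, which consists entirely of citations. The paper gets $(1)\Leftrightarrow(2)$ from \cite[Corollary 10.2.9]{HKST15}, using that $(\mathbb{B}^n,\scL^n)$ is doubling and supports Poincar\'e inequalities. It gets $(2)\Leftrightarrow(3)$ from property (2') of \cite[Theorem 3.17]{HKST01}, using separability of $f(\mathbb{B}^n)$ and the identification $N^{1,p}(\mathbb{B}^n,\R)=W^{1,p}(\mathbb{B}^n,\R)$ with minimal weak upper gradient $|\nabla u|$.

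You instead run the cycle $(1)\Rightarrow(2)\Rightarrow(3)\Rightarrow(2)\Rightarrow(1)$ and prove every arrow except the last by hand:
\begin{itemize}
\item $(1)\Rightarrow(2)$ by composing with $1$-Lipschitz maps;
\item $(2)\Rightarrow(3)$ by the pointwise maximal-function estimate on the ball, applied to the countably many functions $d_V(\cdot,z_k)\circ f$;
\item $(3)\Rightarrow(2)$ by Haj{\l}asz's real-valued theorem.
\end{itemize}
Only the curve-wise implication $(2)\Rightarrow(1)$ is outsourced.

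What your route buys is transparency. It shows exactly where separability enters: recovering $d_V$ on $f(\mathbb{B}^n)$ as a countable supremum is what lets a single null set and a single $g$ serve every $\varphi$. It also shows where $p>1$ enters, namely the maximal theorem.

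It is also more careful about representatives than the statement as written. For $p>n$, take $f=v\chi_{\{x_0\}}$ with $v\neq 0$. This $f$ satisfies (2), and it satisfies (3) with $g=+\infty$ at $x_0$ and $g=0$ elsewhere. But $f\notin N^{1,p}(\mathbb{B}^n,V)$, because the curves issuing from a point have positive $p$-modulus when $p>n$. So the implications into (1) genuinely require modifying $f$ on a null set, as you stipulate. Likewise, your convention $\tilde g=+\infty$ on $N$ is what makes $(2)\Rightarrow(3)$ hold for $f$ itself.

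The paper's route is shorter but hides all of this inside the cited theorems. In yours, the one step you do not prove, $(2)\Rightarrow(1)$, is also the only genuinely delicate one. You should check that the result you cite for it really covers separably valued maps on Euclidean domains and supplies the good representative.
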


\begin{proof}
	Since $ \mathbb{B}^n$ supports Poincar{\' e} inequalities and $\scL^n$ is doubling, the equivalence of (1) and (2) follows from \cite[Corollary 10.2.9]{HKST15}.
	
	Now we prove the equivalence of (2) and (3). Since $f( \mathbb{B}^n)$ is separable, property (2') of Theorem 3.17 in \cite{HKST01} holds. But $N^{1,p}( \mathbb{B}^n, \R) = W^{1,p}( \mathbb{B}^n,\R)$ and the minimal $p$-weak upper gradient of any Newtonian-Sobolev function agrees with its classical gradient. Hence, property (2') of Theorem 3.17 in \cite{HKST01} is equivalent to the defining property of $W^{1,p}( \mathbb{B}^n,V)$.
\end{proof}

Under the restriction $p > n$, Newtonian-Sobolev mappings satisfy Lusin's condition N. The following theorem is a special case of the more general result in \cite{LZ24}.

\begin{theorem}[{\cite[Theorem 1.4]{LZ24}}]\label{t:LCN}
	If $p > n$, $\Omega\subset\R^n$ is open, and $f\in N^{1,p}(\Omega,X)$ is continuous, then $f$ satisfies Lusin's condition N: if $E\subset \Omega$ with $\scL^n(E) = 0$, then $\Haus^n(f(E)) = 0$.
\end{theorem}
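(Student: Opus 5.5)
The plan is a Morrey-type oscillation estimate on small cubes, combined with a covering of the null set $E$ by cubes of small total volume and H\"older's inequality. This is the classical Marcus--Mizel argument, and the only metric-specific input is the oscillation estimate.

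\emph{Reductions.} Condition N is stable under countable unions, and $\Omega$ is a countable union of compact cubes. So it suffices to show $\Haus^n(f(E))=0$ when $E$ is null and $E\subset Q_0$ for a closed cube $Q_0$ with a neighbourhood $\widetilde Q_0\Subset\Omega$. When $p=\infty$, $f$ is locally Lipschitz by \cite[Corollary 5.24]{Durand}, and the claim is immediate from $\Haus^n(f(A))\le C\,\mathrm{Lip}(f)^n\,\scL^n(A)$. So assume $n<p<\infty$, and fix an upper gradient $\rho\in L^p(\Omega)$ as in \eqref{e:up-grad}.

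\emph{Step 1 (Morrey oscillation estimate).} I will show that for every closed cube $Q\subset\widetilde Q_0$ of side length $r$,
\[ \diam f(Q)\le C(n,p)\, r^{1-n/p}\Big(\int_Q\rho^p\Big)^{1/p}. \]
To do this, fix $y\in Q$ and set $u(z)=d_X(f(z),f(y))$.
\begin{itemize}
\item The function $u$ is a $1$-Lipschitz function composed with $f$, restricted to $Q$. Hence $u\in N^{1,p}(Q,\R)$, and $\rho|_Q$ is still a $p$-weak upper gradient, since restricting the curve family preserves \eqref{e:up-grad}.
\item Because $N^{1,p}(Q,\R)=W^{1,p}(Q,\R)$ with $|\nabla u|\le\rho$ a.e., as in the proof of Proposition \ref{t:sob-equiv}, the classical Morrey inequality on cubes applies. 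It gives $|u(x)-u(y)|\le C r^{1-n/p}\|\rho\|_{L^p(Q)}$ for the continuous representative of $u$.
\item Since $f$ is continuous, $u$ is itself continuous and coincides with that representative everywhere.
\end{itemize}
Since $u(y)=0$, we get $d_X(f(x),f(y))\le Cr^{1-n/p}\|\rho\|_{L^p(Q)}$ for all $x,y\in Q$, which is the claimed bound.

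\emph{Step 2 (covering).} Fix $\eta>0$. Choose an open set $U$ with $E\subset U\subset\widetilde Q_0$ and $\scL^n(U)<\eta$. Decompose $U$ into nonoverlapping closed dyadic cubes $Q_i$ of side $r_i$, subdivided further so that all $r_i<\eta$. Then Step 1 and H\"older's inequality with exponents $p/(p-n)$ and $p/n$ give
\[ \Haus^n_\delta(f(E))\le\sum_i(\diam f(Q_i))^n\le C\sum_i r_i^{\,n(1-n/p)}\Big(\int_{Q_i}\rho^p\Big)^{n/p}\le C\Big(\sum_i r_i^n\Big)^{1-n/p}\Big(\int_U\rho^p\Big)^{n/p}. \]
Here $\delta=C\eta^{1-n/p}\|\rho\|_{L^p(\widetilde Q_0)}$, which bounds every $\diam f(Q_i)$. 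The right-hand side is at most $C\eta^{1-n/p}\|\rho\|_{L^p}^n$. Letting $\eta\to0$ sends $\delta\to0$ and the bound to zero, so $\Haus^n(f(E))=0$. Absolute continuity of $\int_U\rho^p$ gives extra smallness, but it is not even needed.

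\emph{Main obstacle.} The delicate point is Step 1 in the metric-valued setting. Morrey's inequality holds only for a representative, so it must hold pointwise at every point, and this requires both the correct localization of the Newtonian upper gradient to cubes and the continuity hypothesis. Without continuity, condition N genuinely fails. If the identification $N^{1,p}(Q)=W^{1,p}(Q)$ on cubes is awkward to cite, I would instead prove the estimate directly from \eqref{e:up-grad}. The route is to integrate $\rho$ along the segments joining $x$ and $y$ to the points of $Q$, which $p$-a.e. are admissible curves. Averaging over these gives the Riesz-potential bound $|u(x)-u_Q|\lesssim\int_Q\rho(z)|x-z|^{1-n}\,dz$, and H\"older then yields $r^{1-n/p}\|\rho\|_p$ because $p>n$. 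The exceptional curve family of zero modulus is handled by continuity of $f$.
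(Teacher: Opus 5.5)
Your proof is correct, but it does not follow the paper, which gives no argument here: it imports the statement as a special case of the more general \cite[Theorem 1.4]{LZ24}. You instead give a self-contained Marcus--Mizel argument, and the only metric-valued input is post-composition with the $1$-Lipschitz function $d_X(\cdot,f(y))$. This yields a real-valued $N^{1,p}=W^{1,p}$ function with $|\nabla u|\le\rho$, the same identification used in Proposition \ref{t:sob-equiv}. The classical Morrey estimate on cubes then gives $\diam f(Q)\lesssim r^{1-n/p}\|\rho\|_{L^p(Q)}$. A dyadic decomposition of a small open neighbourhood of $E$, together with H\"older's inequality with exponents $p/(p-n)$ and $p/n$, finishes the argument; your exponent bookkeeping and your choice of $\delta$ are both correct. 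The citation buys generality, since the source result covers more than the Euclidean-domain case stated here, but it leaves a black box. Your route buys a short, transparent proof of exactly the case the paper needs. It also shows that the same $p>n$ Morrey/capacity phenomenon drives both this lemma and the lower-regularity estimate via \eqref{e:cap-leb} in the proof of Theorem \ref{t:sob-low-reg}.

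One side remark in your ``main obstacle'' paragraph is inaccurate, though it does not affect the proof. Condition N does fail without continuity for arbitrary representatives of $W^{1,p}$ classes. A Newtonian map with $p>n$ and separable target, however, is automatically continuous. The reason is that two $N^{1,p}$ functions that agree a.e.\ agree off a set of $p$-capacity zero, and for $p>n$ every point has positive $p$-capacity, so that set is empty. The continuity hypothesis is therefore harmless here rather than essential.
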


Next, we recall the definition of $p$-capacity.

\begin{definition}[$p$-capacity]
	Let $K$ be a compact set, let $\Omega$ be an open set with $K\subset\Omega\subset \R^n$ and let $1<p<\infty$. We define $\pcap_p(K,\Omega)$, the $p$-capacity of $K$ relative to $\Omega$, by
	\begin{equation*}
		\pcap_p(K,\Omega) = \inf\left\{ \int_\Omega |\nabla u|^p dx : u\in\scR(K,\Omega) \right\},
	\end{equation*}
	where
	\begin{equation*}
		\scR(K,\Omega) = \{u\in C^\infty_0(\Omega) : u\geq 1\ \text{ on } K\}.
	\end{equation*}
\end{definition}

\begin{lemma}[{\cite[\textsection2.2.3 Corollary 2]{Ma85}}]\label{l:cap-lower-bound}
	Let $K$ be a compact set such that $K\subset \Omega \subset \R^n$ and let $1 < p < \infty,\ p \not= n$. Then
	\begin{equation*}
		\pcap_p(K,\Omega) \geq \omega_n^{p/n}n^{(n-p)/n}\left|\frac{p-n}{p-1}\right|^{p-1}\left|\scL^n(\Omega)^{\frac{p-n}{n(p-1)}} - \scL^n(K)^{\frac{p-n}{n(p-1)}}\right|^{1-p}
	\end{equation*}
\end{lemma}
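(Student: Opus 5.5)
The plan is to reduce to the radially symmetric condenser by symmetrization, and then compute the capacity of a spherical annulus explicitly. Here $\omega_n$ must denote the surface area of the unit sphere $S^{n-1}$: with that normalization the constant in the statement comes out exactly, as checked at the end.

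\emph{Reduction to admissible Lipschitz functions.} Fix $u\in\scR(K,\Omega)$. Replacing $u$ by $\min\{\max\{u,0\},1\}$ does not increase $\int_\Omega|\nabla u|^p$. The result is a compactly supported Lipschitz function on $\Omega$ equal to $1$ on $K$. A routine mollification argument shows that the infimum over such Lipschitz functions equals $\pcap_p(K,\Omega)$. So it suffices to bound $\int|\nabla u|^p$ from below for Lipschitz $u$ with $0\le u\le 1$, $u=1$ on $K$, and $\operatorname{supp}u$ compactly contained in $\Omega$.

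\emph{Symmetrization.} Let $u^*$ be the Schwarz symmetrization of $u$: the radially decreasing function whose superlevel sets are balls centered at $\bz$ with the same Lebesgue measure as those of $u$. Then $u^*$ is supported in the ball $\Omega^*$ with $\scL^n(\Omega^*)=\scL^n(\Omega)$, and $u^*=1$ on the ball $K^*$ with $\scL^n(K^*)=\scL^n(K)$. The Pólya--Szegő inequality gives $\int|\nabla u^*|^p\le\int|\nabla u|^p$.

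To avoid quoting Pólya--Szegő, I could instead argue directly via the coarea formula. Set $\mu(t)=\scL^n(\{u>t\})$. Hölder's inequality on each level set gives
\[\int|\nabla u|^p\ge\int_0^1 \frac{\Haus^{n-1}(\{u=t\})^p}{(-\mu'(t))^{p-1}}\,dt .\]
The isoperimetric inequality bounds $\Haus^{n-1}(\{u=t\})$ below by $n^{(n-1)/n}\omega_n^{1/n}\mu(t)^{(n-1)/n}$. This reduces everything to a one-dimensional variational problem in $\mu$, which is exactly the radial one.

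\emph{Radial computation.} The problem is now to minimize $\omega_n\int_r^R|v'(s)|^p s^{n-1}\,ds$ over $v$ with $v(r)=1$ and $v(R)=0$, where $r,R$ are the radii of $K^*,\Omega^*$. By Hölder's inequality (equivalently, the Euler--Lagrange equation $(s^{n-1}|v'|^{p-2}v')'=0$), the minimum is
\[\omega_n\left|\int_r^R s^{-\frac{n-1}{p-1}}\,ds\right|^{1-p}=\omega_n\left|\frac{p-n}{p-1}\right|^{p-1}\left|R^{\frac{p-n}{p-1}}-r^{\frac{p-n}{p-1}}\right|^{1-p}.\]
The case $p\neq n$ is needed here so that the antiderivative is a power rather than a logarithm.

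Now substitute $r=(n\scL^n(K)/\omega_n)^{1/n}$ and $R=(n\scL^n(\Omega)/\omega_n)^{1/n}$. Then
\[R^{\frac{p-n}{p-1}}=\left(\tfrac{n}{\omega_n}\right)^{\frac{p-n}{n(p-1)}}\scL^n(\Omega)^{\frac{p-n}{n(p-1)}},\]
and similarly for $r$. Pulling the common factor out of the bracket contributes $(\omega_n/n)^{(p-n)/n}$, so the total constant is $\omega_n\cdot(\omega_n/n)^{(p-n)/n}=\omega_n^{p/n}n^{(n-p)/n}$, which is the stated bound. If $\Omega$ has infinite measure and $p<n$, the term $\scL^n(\Omega)^{\frac{p-n}{n(p-1)}}$ is interpreted as $0$, and the argument is unchanged.

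\emph{Main obstacle.} The delicate step is the justification of the symmetrization step, namely Pólya--Szegő for $W^{1,p}$ functions. In the coarea version, the corresponding difficulty is handling the critical values of $u$ and the possible non-absolute-continuity of $\mu$. I would handle this by working with smooth $u$, using Sard's theorem so that almost every level set is a smooth hypersurface, and only using the inequality $-\mu'(t)\ge\int_{\{u=t\}}|\nabla u|^{-1}\,d\Haus^{n-1}$, which is the direction the estimate needs.
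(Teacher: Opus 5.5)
Your proof is correct and is the standard one. The paper gives no proof of its own here; it simply quotes Maz'ya, where the estimate comes from the same isoperimetric reduction to the one-dimensional annulus problem (coarea formula, H\"older on level sets and the isoperimetric inequality, or equivalently Schwarz symmetrization), with $\omega_n$ the area of $S^{n-1}$, exactly as your constant check shows. One cosmetic fix: run the coarea version on the original smooth $u$ over the levels $0<t<1$, where already $K\subset\{u>t\}\Subset\Omega$, so no truncation is needed and Sard applies; combine this with the inequality $\int_0^1(-\mu'(t))\,g(\mu(t))\,dt\le\int_{\scL^n(K)}^{\scL^n(\Omega)}g(v)\,dv$, valid for $g\ge0$ and monotone $\mu$, which takes care of any singular part of $\mu$.
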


\begin{remark}
	Let $x\in\Omega$ and $p > n$. Lemma \ref{l:cap-lower-bound} implies
	\begin{equation}\label{e:cap-leb}
		\pcap_p(\{x\},\Omega) \gtrsim_{p,n}\left|\scL^n(\Omega)^{\frac{p-n}{n(p-1)}}\right|^{1-p} = \scL^n(\Omega)^{-\frac{p-n}{n}}.
	\end{equation}
	Equation \eqref{e:cap-leb} is the consequence of Lemma \ref{l:cap-lower-bound} that we will use in our proof.
\end{remark}

As a technical convenience, we will also use the following extension result. The statement is much less general than that proven by the authors; they allow metric space domains and show how to extend off of subsets satisfying much more general conditions than satisfied by a ball.

\begin{lemma}\label{t:sob-ext}
	For every $1 \leq p < \infty$ there exists a a bounded linear extension operator $E_V:M^{1,p}(\mathbb{B}^n,V)\rightarrow M^{1,p}(\R^n,V)$ such that $\|E_V\| \leq C(p)$. Moreover, for any $u\in M^{1,p}(\mathbb{B}^n,V)$, we can assume $E_V(u)=\bz$ on $\R^n \setminus B^n(\bz,2)$.
\end{lemma}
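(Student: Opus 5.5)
The plan is to prove Lemma \ref{t:sob-ext} directly. The construction is reflection across the unit sphere followed by a Lipschitz cutoff. It is linear in $u$ and only uses that $V$ is a Banach space, so multiplying a $V$-valued map by a scalar function makes sense. Let $\rho(x)=x/|x|^2$ be the inversion. For $u\in M^{1,p}(\mathbb{B}^n,V)$ with Haj{\l}asz gradient $g$, set
\[
\tilde u(x)=\begin{cases} u(x), & |x|\le 1,\\ u(\rho(x)), & 1<|x|\le 2.\end{cases}
\]
Then put $E_V(u)=\eta\,\tilde u$, where $\eta$ is a fixed Lipschitz function with $\eta=1$ on $\mathbb{B}^n$, $\eta=0$ off $B^n(\bz,3/2)$, and $|\nabla\eta|\le 2$. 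This map is linear, and it vanishes on $\R^n\setminus B^n(\bz,2)$ by construction.

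\textbf{Step 1: a Haj{\l}asz gradient for $\tilde u$ on $B^n(\bz,2)$.} On the annulus $1\le|x|\le 2$ the inversion $\rho$ is bi-Lipschitz onto $1/2\le|x|\le1$, with constants depending only on $n$. Set $\tilde g=C(g+g\circ\rho)$, with $g\circ\rho$ understood on the annulus. I would check the inequality $\|\tilde u(x)-\tilde u(y)\|\le |x-y|(\tilde g(x)+\tilde g(y))$ in three cases.
\begin{itemize}
\item Both points in $\mathbb{B}^n$: this is the hypothesis on $u$.
\item Both points in the annulus: this follows from the bi-Lipschitz property of $\rho$.
\item Mixed, say $|x|\le 1<|y|\le 2$: compare $u(x)$ with $u(\rho(y))$. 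The key estimate is
\[
|x-\rho(y)|\le |x-y|+|y-\rho(y)|,\qquad |y-\rho(y)|=|y|-|y|^{-1}\le 2(|y|-1)\le 2|x-y|,
\]
which gives $|x-\rho(y)|\le 3|x-y|$.
\end{itemize}
Changing variables (the Jacobian of $\rho$ is bounded above and below on the annulus) gives $\|\tilde u\|_{L^p(B^n(\bz,2))}+\|\tilde g\|_{L^p}\lesssim_{n,p}\|u\|_{L^p}+\|g\|_{L^p}$.

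\textbf{Step 2: the cutoff.} I would use the Haj{\l}asz product rule with candidate gradient $G=\chi_{B^n(\bz,2)}(\tilde g+C|\tilde u|)$, where $|\tilde u|$ denotes the norm in $V$, and $G=0$ outside $B^n(\bz,2)$. The starting point is
\[
\|\eta\tilde u(x)-\eta\tilde u(y)\|\le \eta(x)\|\tilde u(x)-\tilde u(y)\|+|\eta(x)-\eta(y)|\,|\tilde u(y)|.
\]
Both terms are then bounded by $|x-y|(G(x)+G(y))$ whenever both points lie in $B^n(\bz,2)$, after symmetrizing in $x$ and $y$. If exactly one point lies outside $B^n(\bz,2)$, say $y$, then the left side is $\eta(x)|\tilde u(x)|$. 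This is nonzero only for $|x|\le 3/2$, in which case $|x-y|\ge 1/2$, so it is at most $2|x-y|\,|\tilde u(x)|\le |x-y|G(x)$. If both points lie outside, the left side is zero. This yields $\|E_V u\|_{M^{1,p}(\R^n,V)}\le C\|u\|_{M^{1,p}(\mathbb{B}^n,V)}$. The constant depends on $n$ and $p$; the dimension $n$ is fixed throughout.

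\textbf{Main obstacle.} I expect the main obstacle to be the bookkeeping in the mixed cases. These are points straddling the sphere in Step 1, and points straddling the support of $\eta$ in Step 2. The latter is exactly why the zeroth-order term $|\tilde u|$ must be added to the gradient. There is also a minor measure-theoretic point. If the Haj{\l}asz inequality for $u$ holds only off a null set, the reflected exceptional set is still null because $\rho$ is bi-Lipschitz, so the definition is preserved. Strong measurability of $\tilde u$ is inherited from that of $u$ under composition with the continuous map $\rho$.
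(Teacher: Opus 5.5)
Your proof is correct, but it takes a different route from the paper. The paper constructs nothing itself. It quotes \cite[Theorem 5.1]{GBIZ23}, a general extension theorem for Banach-valued Haj{\l}asz--Sobolev functions from measure-density subsets of metric measure spaces, built with Whitney-type averaging. For vanishing off $B^n(\bz,2)$ it points out that in the proof of \cite[Corollary 5.9]{GBIZ23} the global extension is a cutoff multiple of an earlier extension.

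You instead use the geometry of the ball directly. The inversion satisfies $|\rho(x)-\rho(y)|=|x-y|/(|x||y|)\le|x-y|$ on $\{|x|\ge1\}$. The straddling case reduces to $|x-\rho(y)|\le 3|x-y|$. The Jacobian $|x|^{-2n}\le 2^{2n}$ on the annulus controls the $L^p$ norms of $\tilde u$ and $\tilde g$. Your cutoff step is the standard Haj{\l}asz Leibniz rule, with the zeroth-order term added to the gradient, and the observation $|x-y|\ge 1/2$ handles points outside $B^n(\bz,2)$ correctly.

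What your approach buys:
\begin{itemize}
\item It is self-contained and works for every $1\le p<\infty$, with a constant independent of $V$.
\item It preserves continuity automatically, because $\rho$ fixes $\partial\mathbb{B}^n$. This matters for how the lemma is used in the proof of Theorem \ref{t:sob-low-reg}, where $\tilde f$ must be continuous, agree with $f$ on $\mathbb{B}^n$, and vanish near $\partial B^n(\bz,2)$.
\item With the cited abstract extension, continuity needs a separate justification, for instance passing to the H\"older continuous representative available when $p>n$.
\end{itemize}
What the paper's citation buys is generality (arbitrary measure-density domains), which is not needed here.

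Two minor points. Your constant depends on $n$ as well as $p$, which is harmless since $n$ is fixed. Measurability of $u\circ\rho$, and nullity of the reflected exceptional set, come from $\rho$ being bi-Lipschitz on the annulus, so that it preserves Lebesgue-measurable and null sets; continuity alone would not suffice.
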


\begin{proof}
	The first claim follows from \cite[Theorem 5.1]{GBIZ23}. The second claim follows from the proof of \cite[Corollary 5.9]{GBIZ23} in which the extension to the entire space is defined by multiplying an already produced extension by a cutoff function.
\end{proof}

We are now ready to prove Theorem \ref{t:sob-low-reg}.

\begin{proof}[{Proof of Theorem \ref{t:sob-low-reg}}]
	Since $f\in N^{1,p}( \mathbb{B}^n,X)$, Proposition \ref{t:sob-equiv} implies that $f\in M^{1,p}( \mathbb{B}^n,X)$ so that there exists a non-negative $g\in L^p( \mathbb{B}^n)$ such that Theorem \ref{t:sob-equiv}(3) holds. Let
	\begin{align*}
		A &= \{x\in  \mathbb{B}^n : g(x) < 1\},\quad A_j = \{x\in  \mathbb{B}^n : 2^{j} \leq g(x) < 2^{j+1}\}.
	\end{align*}
	Since $g\in L^p$, we have that $ \mathbb{B}^n = \bigcup_{j=0}^\infty A_j \cup A \cup N$ where $\Haus^n(N) = 0$ and all sets constituting the unions are disjoint. This means $f( \mathbb{B}^n) \subset \bigcup_{i=1}^\infty f(A_i) \cup f(A')\cup f(N)$ where $\Haus^n(f(N)) = 0$ because $f$ satisfies Lusin's condition N by Theorem \ref{t:LCN}. Since $f|_{A_i}$ is $2^{j+1}$-Lipschitz and $f|_{A}$ is 2-Lipschitz, this shows that $f(\mathbb{B}^n)$ is $n$-rectifiable.
	
	Assume now that $n<p<\infty$. It suffices to prove (2) for $f\in N^{1,p}( \mathbb{B}^n,X)\subset N^{1,p}( \mathbb{B}^n,V)$ such that $f(0) = \bz$, where $\bz$ is the origin in $V$.
	Since $f( \mathbb{B}^n)\subset X$, we have that $f( \mathbb{B}^n)$ is separable and, by Proposition \ref{t:sob-equiv}, $f\in M^{1,p}( \mathbb{B}^n,V)$. We can apply Lemma \ref{t:sob-ext} to extend $f$ to a function $\td{f} = E_V(f)\in M^{1,p}(\R^n,V)$ such that $\td{f}=\bz$ on $\R^n\setminus B^n(\bz,2)$. We can assume $\tilde{f}$ is continuous, agrees with $f$ on $ \mathbb{B}^n$, and $\tilde{f}\in M^{1,p}(B^n(\bz,2),V)$ with $\|\tilde{f}\|_{M^{1,p}} \lesssim \|f\|_{M^{1,p}}$. Let $E = \tilde{f}(B^n(\bz,2))$. Since $E$ is compact, $\diam(E) < \infty$ and $D = \max\{1, \diam(E)\} < \infty$. By Proposition \ref{t:sob-equiv}, $\tilde{f}\in W^{1,p}(B^n(\bz,2),V)$, so there exists $g\in L^p(B^n(\bz,2))$ satisfying Proposition \ref{t:sob-equiv}(2).
	
	We claim that the measure $\nu$ given by
	\begin{equation*}
		\nu(A) = D^n\delta_\bz(A) + \tilde{f}_*[(1+g^p)dx](A)
	\end{equation*}
	is finite and lower $n$-regular. Assuming the claim, Remark \ref{rem:n-pack} implies that $E$ has finite $n$-packing content, so $f(\mathbb{B}^n)$ has finite $n$-packing content as a subset of $E$.
	
	Observe that $\nu$ is finite since
	\begin{equation*}
		\nu(E) = D^n\delta_\bz(E) + \int_{B^n(\bz,2)}1 + g^p = D^n + \scL^n(B^n(\bz,2)) + \|g\|_p^p  < \infty.
	\end{equation*}
	
	It remains to show that $\nu$ is lower $n$-regular. Let $y = \tilde{f}(x)\in E$ and let $0 < R < \diam(E)$. We want to show that there exists a constant $c_0 > 0$ independent of $x$ and $R$ such that
	\begin{equation*}
		D^n\delta_\bz(B_X(y,R)) + \int_{\tilde{f}^{-1}(B_X(y,R))}(1 + g^p(x))dx \geq c_0R^n.
	\end{equation*}
	If $\scL^n(\td{f}^{-1}(B_X(y,R))) \geq R^n$, then $\nu(B_X(y,R)) \geq \scL^n(\td{f}^{-1}(B_X(y,R))) \geq R^n$. Additionally, if there exists $x\in \tilde{f}^{-1}(B_X(y,R))$ such that $\td{f}(x) = \bz$, then
	\[ \nu(B_X(y,R)) \geq D^n\delta_\bz(B_X(y,R)) = D^n \geq R^n.\]
	Therefore, for the remainder of the proof, we can assume that
	\[ \scL^n(\td{f}^{-1}(B_X(y,R))) < R^n\]
	and that $0\not\in \td{f}^{-1}(B_X(y,R))$. Since $\td{f}|\partial B^n(\bz,2) = \bz$, this latter fact implies that $\td{f}^{-1}(B^n(y,R))$ is a compact set contained in the open ball $U^n(\bz,2)$.
	
	We plan to use Lemma \ref{l:cap-lower-bound}, specifically \eqref{e:cap-leb}, to show that $g^p$ is sufficiently large on $\td{f}^{-1}(B_X(y,R))$. Define $\psi:X\rightarrow \R$ by $\psi(z) = d_X(z,y)$. Since $\td{f}\in W^{1,p}(B^n(\bz,2),X)$ and $\psi$ is 1-Lipschitz, we have $\psi\circ \td{f}\in W^{1,p}(B^n(\bz,2),\R)$ and $|\nabla(\psi\circ \td{f})| \leq g\ \scL^n$-a.e. Define $h:B^n(\bz,2)\rightarrow \R$ by
	\begin{equation*}
		h(u) = \max\left(1 - 2\frac{\psi(\td{f}(u))}{R}, 0\right).
	\end{equation*}
	Define $\Omega_0 = \td{f}^{-1}(U(y,R/2)),\ \Omega_1 = \td{f}^{-1}(U(y,R))$, and let $x\in\Omega_0$ be such that $\td{f}(x) = y$. We claim that $h$ is a continuous function that satisfies the following properties:
	\begin{enumerate}
		\item $h(x) = 1$ and $h(u) = 0$ for all $u\in  \overline{\Omega}_1\setminus\Omega_0$, \label{i:center-outer-values}
		\item $h\in W^{1,p}_0(\Omega_1,\R)$, \label{i:h-compactly-supported}
		\item $(R/2)|\nabla h(u)| \leq g(u)$ for $\scL^n$-a.e. $u\in \overline{\Omega}_1$. \label{i:grad-h-ub}
	\end{enumerate}
	Property (1) follows from the definition of $h$. Property (2) follows from the further claim that $h$ is supported in $\overline{\Omega}_0 \Subset\Omega_1$. For proof of the fact that $\overline{\Omega}_0\Subset\Omega_1$, notice that $\td{f}$ is uniformly continuous on the compact set $\overline{\Omega}_1$ and for any $x_0\in\overline{\Omega}_0$, we have $\dist(\td{f}(x_0),y) \leq R/2$. Choose $\delta > 0$ such that for any $u,v\in\Omega_1$, $|u-v| \leq \delta$ implies $\dist(f(u),f(v)) < R/4$. Now, let $v\in B^n(\overline{\Omega}_0,\delta)$ and let $u\in\overline{\Omega}_0$ be such that $|u-v| \leq \delta$. Then
	\begin{equation*}
		d_X(f(v),y) \leq d_X(f(v),f(u)) + d_X(f(u),y) < \frac{R}{4} + \frac{R}{2} < \frac{3R}{4}.
	\end{equation*}
	This shows that a $\delta$-neighborhood of $\Omega_0$ is contained inside $\Omega_1$, which suffices to prove the claim.
	
	For proof of (3), notice that if $u\in\Omega_0$, then $h(u) = 1 - (2/R)\psi(\td{f}(u))$ so that $R|\nabla h(u)|/2 = |\nabla(\psi\circ \td{f})| \leq g(u)$ a.e. If, instead $u\in\Omega_1\setminus\Omega_0$, then $\psi(\td{f}(u)) > R/2$ so that $h(u) = 0$. This means $\frac{R}{2}|\nabla h(u)| = 0 \leq g(u)$, completing the proof of (3).
	
	Since $h\in W^{1,p}_0(\Omega_1,\R)$, there exists a sequence $h_i\in C^\infty_0(\Omega_1)$ such that $h_i\to h$ in $W^{1,p}(\Omega_1,\R)$. Since $h$ is continuous, any such sequence has $h_i(x)\to 1$. Therefore, there exists $\tilde{h}\in C^\infty_0(\Omega_1)$ such that $\|h-\tilde{h}\|_{W^{1,p}} < \|h\|_{W^{1,p}}/10$ and $\tilde{h}(x) \geq \frac{1}{2}$. Therefore, $2\tilde{h}(x) \geq 1$ so that $2\tilde{h}\in\scR(\{x\},\Omega_1)$, and we can estimate
	\begin{align*}
		\int_{\Omega_1} g^p \geq \frac{R^p}{2^p}\int_{\Omega_1} |\nabla h|^p \geq \frac{R^p}{2^{p+1}}\int_{\Omega_1}|\nabla \tilde{h}|^p &= \frac{R^p}{2^{2p+1}}\int_{\Omega_1}|\nabla (2\tilde{h})|^p\\ &\gtrsim_p R^p\pcap_p(\{x\}, \Omega_1).
	\end{align*}
	Now, \eqref{e:cap-leb} combined with our assumptions that $p > n$ and $\scL^n(\Omega_1) < R^n$ implies
	\begin{equation*}
		\pcap_p(\{x\},\Omega_1) \gtrsim_{n,p}\scL^n(\Omega_1)^{-\frac{p-n}{n}} > R^{n-p}.
	\end{equation*}
	Therefore, we get
	\begin{align*}
		\int_{\Omega_1} g^p \gtrsim_{n,p}R^p\cdot R^{n-p} = R^n.
	\end{align*}
	This proves that $\nu(B_X(y,R))\gtrsim_{n,p}R^n$, so $\nu$ is lower $n$-regular.
\end{proof}

\section{Expansive subsets of metric spaces}

In this section we lay the rest of the groundwork for the proofs of Theorems \ref{thm:main-euc} and \ref{thm:main-met} by introducing \emph{expansive} subsets of metric spaces. In the proofs of both Theorems \ref{thm:main-euc} and \ref{thm:main-met}, we will show that any subset of positive $\Haus^n$ measure on which the conclusion fails is $n$-expansive as given in Definition \ref{def:expansive}. As a consequence of the results of this section, we will be able to conclude that such a set has infinite $n$-packing content, which would give a contradiction to Theorem \ref{t:sob-low-reg}. We will begin with a more general study of $s$-expansive spaces for any $s > 0$ from which we will derive the necessary results.

\subsection{Preliminaries on expansive subsets}
Intuitively, $F\subset X$ is $s$-expansive for $s > 0$ if the following holds: For any $x\in F$ there exists a sequence of scales $r_k\rightarrow0$ and ``$s$-substantial snapshots" $F_{x,r_k}\subset B_X(x,r_k)$ containing $x$ that have some point of $B_X(x,r_k)\setminus F_{x,r_k}$ at distance comparable to $r_k$.

\begin{definition}[$s$-expansive spaces]\label{def:expansive}
	Let $X$ be a metric space, and let $s > 0$. An $\Haus^s$-measurable subset $F\subset X$ is \emph{$s$-expansive in $X$} if $0 < \Haus^s(F) < \infty$ and for $\cH^s$-a.e. $x\in F$, there exists a sequence $r_k\rightarrow0$, a constant $0 < \alpha < 1$, and closed subsets $F_{x,r_k}\subset F\cap B_X(x,r_k)$ such that $x\in F_{x,r_k}$,
	\begin{equation}
		\lim_{k\rightarrow\infty}\frac{\cH^s(F\cap B_X(x,r_k)\setminus F_{x,r_k})}{(2r_k)^s} = 0, \label{item:expansive-dense}
	\end{equation}
	and
	\begin{equation}
		\limsup_{k\rightarrow\infty}\frac{\ex(B_X(x,\alpha r_k), F_{x,r_k})}{r_k} > 0. \label{item:expands}
	\end{equation}
\end{definition}

\begin{example}\label{ex:expansive-1}
\color{white}.\color{black}
	\begin{enumerate}
		\item The set $F = [0,1]^n\times\{0\}$ is $n$-expansive in $X=[0,1]^{n+1}$. In fact, any $F'\subset X$ with $0 < \Haus^n(F') < \infty$ is $n$-expansive in $X$.
		\item If $F\subset [0,1]^n$ with $\Haus^n(F) > 0$, then $F$ is not $n$-expansive in $[0,1]^n$ by the Lebesgue density theorem.\label{item:not-expansive}
		\item In Example \ref{ex:approx-vs-true}, the set $[0,1]^n\times\{0\}$ is $n$-expansive in $X$.
		\item In Example \ref{ex:rect-no-tan-planes}, the set $D = [0,1]^n\times\{0\}^{d-n}$ is $n$-expansive in $E$.
	\end{enumerate}
\end{example}

As we will see, the motivation for the name is that an $s$-expansive subset $F$ ``expands'' $X$ nearby $F$, augmenting merely the existence of $F\subset X$ with $0 < \Haus^s(F) <\infty$ into $P_\infty^s(X) = \infty$. This condition can also be viewed as a type of local porosity condition for $F$ inside $X$.

\begin{example}\label{ex:expansive-2}
	Let $X$ be a separable metric space with $0 < \Haus^s(X) < \infty$ for some $s > 0$. Let $\{x_n\}_{n\in\N}$ be a countable dense subset of $X$, and let $(y_n)_{n\in\N}\subset[0,1]$ be any sequence with $y_n\rightarrow 0$. Consider the metric space
	\begin{equation*}
		Y = (X \times\{0\}) \cup (\{x_n:n\in\N\} \times \{y_n:n\in\N\}) \subset X\times[0,1]
	\end{equation*}
	with the product metric. The set $F = X\times\{0\}$ is $s$-expansive in $X$.
\end{example}

To establish the main lemma on $s$-expansivity, we will need to use Vitali coverings and the notion of an aymptotically doubling measure.
\begin{definition}[Vitali cover]
	A family $\mathscr{B}$ of closed balls on a metric space $X$ is a \emph{Vitali cover} of $E\subset X$ if for every $x\in E$ and $\epsilon > 0$, there exists $0 < r < \epsilon$ such that $B_X(x,r)\in \scB$.
\end{definition}
\begin{definition}[Asymptotically doubling measure]
	A Borel regular measure $\mu$ on a metric space $X$ is \emph{asymptotically doubling} if
	\begin{equation*}
		\limsup_{r\rightarrow 0}\frac{\mu(B_X(x,2r))}{\mu(B_X(x,r))} < \infty \hbox{\quad for $\mu$-a.e. $x\in X$.}
	\end{equation*}
\end{definition}
The following lemma tells us that we have access to the Vitali covering theorem as long as our measure is asymptotically doubling.
\begin{lemma}[{\cite[Theorem 2.10]{Ba22}}]\label{l:rect-vitali}
	If $\Haus^s$ is asymptotically doubling at $\Haus^s$-a.e. $x\in X$, then $\Haus^s$ has a Vitali covering theorem. That is, if $E\subset X$ has a Vitali cover $\scB$, then there exists $\scB'\subset\scB$ such that $\scB'$ is disjoint, countable, and  $\Haus^s\left( E\setminus\bigcup\scB' \right) = 0$.
\end{lemma}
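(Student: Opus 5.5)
The plan is to reduce to the classical Vitali argument for doubling measures, run it on pieces of $E$ where the doubling constant and the measure are uniformly controlled, and glue the pieces together while keeping the selected balls disjoint. Throughout, the asymptotic doubling hypothesis is read as including that $0<\Haus^s(B_X(x,r))<\infty$ for all small $r$ at $\Haus^s$-a.e.\ $x$; this is needed for the ratio to make sense.

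\emph{Step 1: good pieces.} Discard the $\Haus^s$-null set where asymptotic doubling fails. For $k,m\in\N$ let $E_{k,m}$ be the set of $x\in E$ such that $0<\Haus^s(B_X(x,r))<\infty$ and $\Haus^s(B_X(x,2r))\le k\,\Haus^s(B_X(x,r))$ for all $0<r<1/m$, and moreover $\Haus^s(B_X(x,1/m))\le m$. These sets exhaust $E$ up to a null set. Since $X$ is separable, each $E_{k,m}$ is covered by countably many balls $B_X(z,1/(4m))$ with $z\in E_{k,m}$. I enumerate all the resulting pieces $P_1,P_2,\dots$, where each $P_j$ is $E_{k,m}\cap B_X(z,1/(4m))$ for some $k,m,z$. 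A ball centered in $P_j$ of radius $<1/(4m)$ lies in $B_X(z,1/m)$, so its measure is at most $m$.

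\emph{Step 2: one piece.} Fix a piece $P=P_j$ with data $k,m,z$. Fix also a closed set $C$, which will be the finite union of balls already chosen. Let $\scB_P$ be the balls of $\scB$ centered in $P\setminus C$, of radius $<1/(20m)$, and disjoint from $C$. This family is still a Vitali cover of $P\setminus C$, because $C$ is closed.

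Select greedily: at each stage pick a ball whose radius exceeds half the supremum of radii among balls disjoint from those already picked. This gives a disjoint sequence $B_i=B_X(x_i,r_i)$. All $B_i$ lie in $B_X(z,1/m)$, which has measure at most $m$, so $\sum_i\Haus^s(B_i)\le m$.

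The standard argument then shows that for every $N$,
\[P\setminus \Bigl(C\cup\bigcup_{i\le N}B_i\Bigr)\subset\bigcup_{i>N}5B_i.\]
Indeed, a point $x$ outside the first $N$ balls and outside $C$ has a small ball in $\scB_P$ avoiding them. That ball must meet some later $B_i$ with $r_i\ge$ half its radius, and so it lies in $5B_i$.

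Iterating the doubling bound three times at the centers $x_i\in E_{k,m}$ (admissible since $5r_i<1/m$) gives
\[\Haus^s(5B_i)\le k^3\,\Haus^s(B_i).\]
Hence the uncovered part has measure at most $k^3\sum_{i>N}\Haus^s(B_i)\to0$. So $\Haus^s\bigl(P\setminus(C\cup\bigcup_i B_i)\bigr)=0$. For the gluing step it is convenient to stop at a finite $N$ at which this tail is below $2^{-j}\varepsilon$.

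\emph{Step 3: gluing.} Process the pieces in an order in which every $P_j$ recurs infinitely often. At each step, apply Step 2 to the current piece with $C$ equal to the finite union of balls chosen so far, and add finitely many new balls whose tail error is summable. The full collection $\scB'$ is countable and disjoint. A point of $E$ not covered by $\scB'$ and lying in $P_j$ belongs, at every revisit of $P_j$, to a set of measure at most $2^{-\ell}$. Consequently $\Haus^s(P_j\setminus\bigcup\scB')=0$ for each $j$, and summing over $j$ gives $\Haus^s(E\setminus\bigcup\scB')=0$.

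The main obstacle I expect is the measure-theoretic bookkeeping. $\Haus^s$ need not be locally finite or Radon, so the classical proof cannot simply use an open neighborhood of finite measure. The uniform bound $\Haus^s(B_X(x,1/m))\le m$ in Step 1, combined with the repeated finite-stage gluing, is what replaces that.
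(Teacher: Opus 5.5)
Your proof is correct. The paper gives no argument for this lemma; it only quotes \cite[Theorem 2.10]{Ba22}. So your write-up is a genuinely different, self-contained route.

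The citation buys brevity. Your version makes visible two points the citation hides.
\begin{itemize}
\item \emph{Local finiteness.} $\Haus^s$ is in general not finite on bounded sets, so the textbook Vitali argument for doubling measures does not apply verbatim. You replace local finiteness by localizing to the pieces $E_{k,m}\cap B_X(z,1/(4m))$. There every candidate ball lies in $B_X(z,1/m)$, which has measure at most $m$, and the doubling constant is uniformly $k$ below scale $1/m$. Your revisiting scheme, with the closed finite unions $C$, then glues pieces with different constants while keeping the selected balls disjoint.
\item \emph{Separability.} You use that $X$ is separable, and this is necessary, not just convenient. Take $X=[0,1]\times D$ with $D$ uncountable, distance $|t-u|$ within a fiber and $2$ between fibers. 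Then $\Haus^1$ is asymptotically doubling at every point. But any countable subfamily of the Vitali cover by all balls of radius $<1$ meets only countably many fibers, so it leaves a set of infinite $\Haus^1$-measure uncovered. This is harmless in the paper: Section~\ref{sec:Sobolev} fixes $X$ separable, and a space of finite packing content is totally bounded. Still, you should state separability as a hypothesis rather than assert it.
\end{itemize}
Your reading of asymptotic doubling as including $0<\Haus^s(B_X(x,r))<\infty$ for small $r$ matches how the paper uses the lemma in Proposition~\ref{p:expansive}. There the hypothesis comes from $0<\Theta^s_*\le\Theta^{*s}<\infty$.

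One step in Step 2 should be written out. If the greedy sequence is infinite, the claim that an admissible ball $B$ ``must meet some later $B_i$'' is not automatic. Suppose $B$ met none of them; then $r_i\ge r(B)/2$ for every $i$. Take the least $j$ with $2^j r_i\ge 1/(2m)$. Then $j\ge 1$ and $2^{j-1}r_i<1/m$, so every doubling step is admissible, and $j\le J$ for some $J$ independent of $i$. Since $d(x_i,z)\le 1/(4m)$, the ball $B_X(x_i,2^jr_i)$ contains $B_X(z,1/(4m))$. Doubling at $x_i$ therefore gives $\Haus^s(B_i)\ge k^{-J}\Haus^s(B_X(z,1/(4m)))$, and this is positive because $z\in E_{k,m}$. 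Infinitely many such disjoint balls inside $B_X(z,1/m)$ contradict $\sum_i\Haus^s(B_i)\le m$. This is exactly where the positivity you built into $E_{k,m}$ is used.
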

We will use the following lemma to produce useful Vitali covers for $s$-expansive spaces with positive lower $s$-density. We will need to restrict to such spaces for the rest of the section. Given a metric space $X$, a Borel set $A\subset X$ and $x\in\X$, recall the upper and lower, respectively, Hausdorff densities
\[ \Theta^{*,s}(A,x) = \limsup_{r\to 0} \frac{\mathcal{H}^s(B(x,r))}{(2r)^s}, \quad \Theta^{s}_*(A,x) = \liminf_{r\to 0}\frac{\mathcal{H}^s(B(x,r))}{(2r)^s}.\]

\begin{lemma}\label{l:ex-helper}
	Let $X$ be a metric space, and let $\td{F}\subset X$ be $s$-expansive in $X$. Fix choices of closed sets $\{\tilde{F}_{x,r_k}\}_{x\in F,k\geq0}$ satisfying \eqref{item:expansive-dense} and \eqref{item:expands}, and define for each $\epsilon > 0$
	\begin{align*}
		F^\epsilon = \bigg\{x\in \td{F} : \limsup_{k\to \infty}\frac{\ex(B_X(x,(1-\epsilon)r_k),\td{F}_{x,r_k})}{r_k} > \epsilon, \ &\Theta^s_*(\td{F},x) > \epsilon,\\
		&\text{and }\Theta^{*s}(\td{F},x) \leq 1\bigg\}.
	\end{align*}
	Let $F\subset F^\epsilon$, let $\delta > 0$, and let $\scB(\epsilon,\delta)$ be the collection of balls in $X$ such that for any $B_X(x,r)\in\scB(\epsilon,\delta)$, there exists a closed set $F_{x,r}\subset F\cap B_X(x,r)$ satisfying
	\begin{enumerate}[label=(\roman*)]
		\item \label{i:GB-euc-big} $(1-\delta)\Haus^s(F\cap B_X(x,r)) \leq \Haus^s(F_{x,r}) \leq (1+\delta)(2r)^s$,
		\item \label{i:euc-far-point} $B_X(y,\epsilon r)\subseteq B_X(x,r)$ and $\dist(y,F_{x,r}) > \epsilon r$, for some $y\in B_X(x,r)\setminus F_{x,r}$.
	\end{enumerate}
	Then the collection $\scB(\epsilon,\delta)$ is a Vitali cover of a full $\Haus^s$-measure subset of $F$.
\end{lemma}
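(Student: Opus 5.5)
Fix $x\in F$ lying in a suitable full-measure subset of $F$ and $\eta>0$. The goal is to find $r<\eta$ with $B_X(x,r)\in\scB(\epsilon,\delta)$. I would take $F_{x,r}:=\td{F}_{x,r_k}\cap F$ for a suitable $k$, so it remains to check (i) and (ii).

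\emph{Step 1: the full-measure subset.} Since $\Haus^s(\td F)<\infty$, the density theorem for Hausdorff measure gives $\Theta^{*,s}(\td F\setminus F,x)=0$ for $\Haus^s$-a.e.\ $x\in F$. This is the standard upper density estimate $\Theta^{*,s}(A,x)=0$ for a.e.\ $x$ outside an $\Haus^s$-measurable set $A$ of finite measure; I assume $F$ is measurable, or pass to a measurable hull. Fix such an $x$. Then for all large $k$ we have $\Haus^s(\td F\cap B_X(x,r_k)\setminus F)=o(r_k^s)$. Moreover, since $x\in F^\epsilon$, we have $\Haus^s(\td F\cap B_X(x,r_k))\ge \epsilon (2r_k)^s$ for large $k$. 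We also have \eqref{item:expansive-dense} for $\td F_{x,r_k}$.

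\emph{Step 2: condition (i).} The inclusion $F\cap B_X(x,r_k)\setminus F_{x,r_k}\subset \td F\cap B_X(x,r_k)\setminus \td F_{x,r_k}$ gives
\[\Haus^s(F_{x,r_k})\ge \Haus^s(F\cap B_X(x,r_k)) - o(r_k^s).\]
The lower density bound gives $\Haus^s(F\cap B_X(x,r_k))\ge \epsilon(2r_k)^s-o(r_k^s)$. So the $o(r_k^s)$ error is eventually at most $\delta\,\Haus^s(F\cap B_X(x,r_k))$, which yields the lower inequality in (i). For the upper inequality, $\Theta^{*,s}(\td F,x)\le1$ gives $\Haus^s(F_{x,r_k})\le \Haus^s(\td F\cap B_X(x,r_k))\le(1+\delta)(2r_k)^s$ for large $k$. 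Closedness of $F_{x,r_k}$ is an issue when $F$ is not closed. I would handle it either by noting that $F$ can be taken closed after an inner-regularity reduction, or by replacing $F_{x,r_k}$ with a large closed subset of $\td F_{x,r_k}\cap F$, using inner regularity of $\Haus^s$ restricted to a set of finite measure.

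\emph{Step 3: condition (ii).} Since $x\in F^\epsilon$, there are infinitely many $k$ with a point $y\in B_X(x,(1-\epsilon)r_k)$ satisfying $\dist(y,\td F_{x,r_k})>\epsilon r_k$. Then $B_X(y,\epsilon r_k)\subset B_X(x,r_k)$ by the triangle inequality. Since $F_{x,r_k}\subset\td F_{x,r_k}$, we get $\dist(y,F_{x,r_k})>\epsilon r_k$, and in particular $y\notin F_{x,r_k}$. Choosing $k$ in this infinite set, large enough for Step 2 and with $r_k<\eta$, gives the required ball. This is exactly the Vitali cover property at $x$.

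The main obstacle is Step 1 together with the closedness issue. We need that passing from $\td F$ to the subset $F$ costs only $o(r^s)$ mass at almost every point of $F$, and that this density-zero statement holds for general Hausdorff measures in metric spaces. I would invoke the upper density theorem $\Theta^{*,s}(A,x)=0$ for $\Haus^s$-a.e.\ $x\notin A$ when $\Haus^s(A)<\infty$, which holds in any metric space via the standard $5r$-covering argument.
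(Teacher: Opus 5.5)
Your proposal is correct and follows essentially the same route as the paper: set $F_{x,r_k}=\td{F}_{x,r_k}\cap F$, use the density theorem to transfer the density bounds from $\td{F}$ to $F$ at a.e.\ point of $F$, then verify (i) from \eqref{item:expansive-dense} and (ii) from the excess condition defining $F^\epsilon$. Your remark that $\td{F}_{x,r_k}\cap F$ need not be closed is a point the paper's proof passes over. Your fix, shrinking to a large closed subset, only strengthens (ii) and costs an arbitrarily small amount in (i), so it is sound.
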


\begin{proof}
	Let $\delta > 0$ and $F\subset F^{\epsilon}$. For every $x\in F$ and $k\geq0$, set $F_{x,r_k} = \td{F}_{x,r_k}\cap F$.
	At $\Haus^s$-a.e. $x\in F$, we get
	\begin{align*}
		\lim_{k\rightarrow\infty}\frac{\cH^s(F\cap B_X(x,r_k)\setminus F_{x,r_k})}{(2r_k)^s} &= \lim_{k\rightarrow\infty}\frac{\cH^s(F\cap B_X(x,r_k)\setminus \td{F}_{x,r_k})}{(2r_k)^s}\\
		&\leq \lim_{k\rightarrow\infty}\frac{\cH^s(\td{F}\cap B_X(x,r_k)\setminus \td{F}_{x,r_k})}{(2r_k)^s} = 0,
	\end{align*}
	and
	\begin{equation*}
		\Theta^{s}_*(F, x) = \Theta^s_*(\td{F},x) > \epsilon \text{\quad and \quad} \Theta^{*s}(F,x) = \Theta^{*s}(\td{F},x) \leq 1.
	\end{equation*}
	This means the following hold at $\Haus^s$-a.e. $x\in F$ for every sufficiently large $k$:
	\begin{enumerate}
		\item \label{i:Bx-msr} $\epsilon (2r_k)^s \leq \Haus^s(F\cap B_X(x,r_k)) \leq (1+\delta)(2r_k)^s$, and
		\item \label{i:E-msr} $\Haus^s(F\cap B_X(x,r_k)\setminus F_{x,r_k}) \leq \delta\epsilon(2r_k)^s \leq \delta\Haus^s(F\cap B_X(x,r_k))$.
	\end{enumerate}
	We trivially get $\Haus^s(F_{x,r_k}\cap B_X(x,r_k)) \leq (1+\delta)(2r_k)^s$, from the right inequality in \eqref{i:Bx-msr}, while \eqref{i:E-msr} gives
	\begin{align*}
		\Haus^s(F_{x,r_k}) &= \Haus^s(F\cap B_X(x,r)) - \Haus^s(F\cap B_X(x,r)\setminus F_{x,r_k})\\
		& \geq (1-\delta)\Haus^s(F \cap B_X(x,r_k)).
	\end{align*}
	We can additionally find infinitely many $k$ such that
	\begin{equation*}
		\ex(B_X(x,(1-\epsilon)r_k),F_{x,r_k}) \geq \ex(B_X(x,(1-\epsilon)r_k),\td{F}_{x,r_k}) > \epsilon r_k,
	\end{equation*}
	which implies that there exists $y\in B_X(x,(1-\epsilon)r_k)$ such that $\dist(y,F_{x,r_k}) > \epsilon r_k$. Therefore, $B_X(x,r_k)\in\scB(\epsilon,\delta)$. This shows that $\scB(\epsilon,\delta)$ is a Vitali cover of $F$.
\end{proof}

\subsection{Expansive subsets give infinite packing content}
In the following proposition, we iterate Lemma \ref{l:ex-helper} to show that inside any metric space with an $s$-expansive subset with positive lower $s$-density, we can pack a large tree of disjoint balls into $X$. Each level of balls in the tree will contribute a fixed amount to the $s$-packing content, showing that $X$ has infinite $s$-packing content.
\begin{proposition}\label{p:expansive}
	Let $X$ be a metric space. If there exists $F\subset X$ that is $s$-expansive in $X$ and $\Theta^s_*(F,x) > 0$ for $\Haus^s$-a.e. $x\in F$, then $X$ does not have finite $s$-packing content.
\end{proposition}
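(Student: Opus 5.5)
Assume for contradiction that $X$ has finite $s$-packing content with constant $M$ as in Definition~\ref{def:goodpack}. The plan is to build, for every $N\in\N$, a disjoint family of balls with radii at most $\diam X$ whose $s$-th powers of radii sum to at least $cN$ for a fixed $c>0$. This contradicts \eqref{eq:n-pack} once $N$ is large.

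\emph{Setup.} First fix $\epsilon>0$ small enough that $\Haus^s(F^\epsilon)>0$, where $F^\epsilon$ is the set from Lemma~\ref{l:ex-helper} with $\td F=F$. Such an $\epsilon$ exists because the sets $F^\epsilon$ increase as $\epsilon\downarrow 0$, and their union has full measure in $F$. This uses three facts: \eqref{item:expands} holds with some $\alpha<1$, the lower density is positive by hypothesis, and $\Theta^{*s}(F,x)\le 1$ a.e.\ by the standard upper density bound for $\Haus^s$. Next, $\Haus^s\llcorner F$ is asymptotically doubling, since a.e.\ $\epsilon(2r)^s\lesssim\Haus^s(F\cap B(x,r))\lesssim(2r)^s$ at small scales. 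Hence Lemma~\ref{l:rect-vitali} applies to $\Haus^s\llcorner F$. I would apply it to the metric space $F$ itself, since only measures on subsets of $F$ enter.

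\emph{Iteration.} Set $G_0=F^\epsilon$ and fix a small $\delta$. By Lemma~\ref{l:ex-helper}, $\scB(\epsilon,\delta)$ is a Vitali cover of almost all of $G_0$. Lemma~\ref{l:rect-vitali} then gives disjoint balls $B_i=B_X(x_i,r_i)$, all of small radius, covering almost all of $G_0$. Each $B_i$ carries a closed set $F_{x_i,r_i}$ and a point $y_i$ such that $B_X(y_i,\epsilon r_i)\subset B_i$ and $B_X(y_i,\epsilon r_i)$ misses $F_{x_i,r_i}$; shrink these to radius $\epsilon r_i/2$. These are first-generation balls, mutually disjoint, and they contribute $\sum_i(\epsilon r_i/2)^s$. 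This sum is comparable to $\epsilon^s\sum_i\Haus^s(F\cap B_i)\gtrsim\epsilon^s\Haus^s(G_0)$ by condition (i) and the density bounds.

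Now set $G_1=\bigcup_i \bigl(F_{x_i,r_i}\setminus U(F_{x_i,r_i})\bigr)$, where $U(F_{x_i,r_i})$ is the open $\eta r_i$-neighborhood of the boundary region. More simply, take $G_1=\bigcup_i F_{x_i,r_i}$ intersected with points at distance more than $\epsilon r_i/4$ from the new balls. Closedness of $F_{x_i,r_i}$ ensures these points stay a definite distance away from $B_X(y_i,\epsilon r_i/2)$. Condition (i) gives $\Haus^s(G_1)\ge(1-\delta)\Haus^s(G_0)$, apart from null sets. Since $G_1\subset F\subset F^\epsilon$ still has the a.e.\ properties, I rerun the argument inside $G_1$ with radii forced below $\epsilon r_i/8$ for balls centered in $B_i$. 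The next generation of balls then stays inside $B_i$ and avoids the previous holes. Repeating $N$ times produces a disjoint family whose total is at least $c\,\epsilon^s\sum_{k<N}(1-\delta)^k\Haus^s(G_0)$. Choosing $\delta\lesssim 1/N$ makes this at least $cN\epsilon^s\Haus^s(G_0)/2$, which contradicts $M<\infty$.

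\emph{Main obstacle.} The delicate step is keeping each generation disjoint from all previous holes while keeping the mass loss at $(1-\delta)$ per step. Each hole must be separated from the surviving set $G_k$. I would handle this with the margin $\dist(y,F_{x,r})>\epsilon r$: shrinking holes to radius $\epsilon r/2$ and restricting later radii below $\epsilon r/4$ forces later balls centered in $F_{x,r}$ to miss the hole. The other point to check is that Lemma~\ref{l:ex-helper} applies to each $G_k$, which needs each $G_k$ to be a subset of $F^\epsilon$ with full-measure properties. This follows because $G_k$ is a measurable subset of $F^\epsilon$ and densities pass to subsets a.e.
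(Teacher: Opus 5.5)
Your architecture matches the paper's. You choose $\epsilon$ with $\Haus^s(F^\epsilon)>0$, combine Lemma~\ref{l:ex-helper} with Lemma~\ref{l:rect-vitali}, iterate inside the substantial sets, and gain $\gtrsim\epsilon^s\Haus^s(F^\epsilon)$ per generation. Running $N$ generations with $\delta\lesssim 1/N$, rather than an infinite tree with summable $\delta_k$, is a harmless variant. Shrinking the holes to radius $\epsilon r/2$ is a clean way to keep descendants of a ball off that ball's own hole. What is not established is that the final family of holes is a packing.

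\textbf{The gap: disjointness of holes across different branches.} Your radius cap ($<\epsilon r_i/4$ or $<\epsilon r_i/8$ for balls centered in $B_i$) uses only $\dist(y_i,F_{x_i,r_i})>\epsilon r_i$, so it controls only the parent's own hole. The claim that the next generation ``stays inside $B_i$'' is false. A ball centered at a point of $F_{x_i,r_i}$ near the edge of $B_i$, with radius comparable to $\epsilon r_i$, protrudes from $B_i$. If a same-generation neighbour $B_j$ is much smaller ($r_j\ll\epsilon r_i$), that ball can contain all of $B_j$ together with its hole $B_X(y_j,\epsilon r_j/2)$. Nothing then prevents the new ball's hole, or its descendants' holes, from meeting $B_X(y_j,\epsilon r_j/2)$.

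Neither of your sketched modifications of $G_1$ repairs this with a loss of order $\delta$. Deleting points within $\epsilon r_i/4$ of other holes can cost, at every generation, a fraction of mass depending only on $\epsilon$. The total over generations then stays bounded independently of $N$, and no contradiction follows. Deleting a neighbourhood of the boundary of $B_i$ comes with no estimate at all, since in a general metric space $F\cap\{d(\cdot,x_i)=r_i\}$ can have positive $\Haus^s$-measure.

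\textbf{How to close it.} Truncate each Vitali family to a \emph{finite} subfamily, losing at most a $\delta$-fraction of mass, and then exploit positive separation. This is what the paper does. It chooses $\eta_k>0$ so that the dilates $(1+10\eta_k)B$, $B\in\scB_k$, are pairwise disjoint. It then requires children of $B$ to have radius $<\min\{\epsilon,\eta_k\}r(B)$, so every descendant of $B$ stays in $(1+10\eta_k)B$, away from other branches.

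A variant that fits your scheme directly: let $H_k$ be the finite, hence closed, union of all holes built so far. By induction every point $x$ of the current good set lies outside $H_k$, so $\dist(x,H_k)>0$. Capping radii at $x$ by $\dist(x,H_k)$ preserves the Vitali property and costs no mass. It also forces every later ball, hence every later hole, to miss all earlier holes.

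\textbf{A smaller point.} Applying Lemma~\ref{l:rect-vitali} in the metric space $F$ only makes the traces $B_X(x_i,r_i)\cap F$ disjoint, while your holes lie in $X\setminus F$. You need the balls $B_X(x_i,r_i)$ themselves to be disjoint. So apply the covering theorem in $X$ to the restriction of $\Haus^s$ to $F^\epsilon$, which is asymptotically doubling at a.e.\ point of $F^\epsilon$.
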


\begin{proof}
	For any $x\in F$, fix a sequence $r_k\rightarrow0$ and a choice of subsets $F_{x,r_k}\subseteq B_X(x,r_k)$ satisfying \eqref{item:expansive-dense} and \eqref{item:expands}. The inequality \eqref{item:expands} and the lower $s$-density assumption implies that there exists $0 < \epsilon < \frac{1}{2}$ such that the set
	\begin{align*}F^\epsilon = \bigg\{x\in F : \limsup_{k}\frac{\ex(B_X(x,(1-\epsilon)r_k),F_{x,r_k})}{r_k} > \epsilon,\ &\Theta^s_*(F,x) > \epsilon,\\
		&\text{and }\Theta^{*s}(F,x) \leq 1\bigg\}
	\end{align*}
	has $\Haus^s(F^\epsilon) > 0$. Let $\delta_k = 2^{-k}\frac{1}{100}$ for all integers $k \geq 0$.
	
	We wish to apply Lemma \ref{l:ex-helper} to $F^\epsilon$ itself. Let $\scB_0' := \scB(\epsilon,\delta_0)$ be the family of balls centered on $F^\epsilon$ that satisfy all the conditions in Lemma \ref{l:ex-helper} for $\delta=\delta_0$. The collection $\scB_0'$ is a Vitali cover for $F^\epsilon$, and the fact that
	\begin{equation*}
		0 < \Theta^s_*(F,x) = \Theta^s_*(F^\epsilon,x) \leq \Theta^{*s}(F^\epsilon,x) = \Theta^{*s}(F,x) < \infty
	\end{equation*}
	at $\Haus^s$-a.e. $x\in F^\epsilon$ implies $\Haus^s$ is asymptotically doubling on $F^\epsilon$. Lemma \ref{l:rect-vitali} gives a finite disjoint collection $\scB_0\subset\scB_0'$ such that
	\begin{equation}\label{e:good-msr}
		\Haus^s\left(F^\epsilon \setminus \textstyle\bigcup\scB_0\right) \leq \delta_0\Haus^s(F^\epsilon),
	\end{equation}
	and for any $B\in\scB_0$ there exists a closed subset $F_B\subset F^\epsilon\cap B$ satisfying
	\begin{enumerate}
		\item $(1-\delta)\Haus^s(F^\epsilon\cap B) \leq \Haus^s(F_B) \leq (1+\delta)(2r(B))^s$,
		\item There exists $y_B\in B_X(x,r)\setminus F_B$ such that $B_X(y_B,\epsilon r)\subseteq B_X(x,r)$ and $\dist(y,F_B) \geq \epsilon r$.
	\end{enumerate}
	This means that
	\begin{equation}\label{e:B0-bound}
		\sum_{B\in\scB_0}(2r(B))^s \geq \sum_{B\in\scB_0}\frac{1-\delta_0}{1+\delta_0}\Haus^s(F^\epsilon\cap B) \geq \frac{1}{2}\Haus^s(F^\epsilon),
	\end{equation}
	where we used \eqref{e:good-msr} in the second inequality. Because $\scB_0$ is a finite disjoint collection of closed balls, we can also find $\eta_0 > 0$ such that $(1+10\eta_0)B \cap (1+10\eta_0)B' = \varnothing$ for any $B,B'\in\scB_0$.
	
	Assuming that we have defined finite disjoint families $\scB_{j}$ and decreasing constants $\eta_j$ for all $0 \leq j \leq k-1$ as above, we now define a new family $\scB_{k}$ by iterating this construction inside each ball of $\scB_{k-1}$ to form an infinite tree of balls. For any ball $B = B_X(x,r)\in\scB_{k-1}$ with subset $F_B\subset F^\epsilon\cap B_X(x,r)$, we let $\scB'_{k}(B)$ be the collection of balls $B' = B_X(y,t)$ centered on $F_B$ such that $t <\min\{\epsilon r, \eta_{k-1}r\}$, and there exists a closed subset $F_B'\subset F_B$ with
	\begin{enumerate}
		\item \label{i:GB-exists} $(1-\delta_k)\Haus^s(F_{B}\cap B') \leq \Haus^s(F_{B'}) \leq (1+\delta_k)(2t)^s$,
		\item There exists $y_{B'}\in B'\setminus F_{B'}$ such that $B_X(y_{B'},\epsilon t)\subseteq B'$ and $\dist(y_{B'},F_{B'}) \geq \epsilon t$.
	\end{enumerate}
	Lemma \ref{l:ex-helper} implies that $\scB_{k}'(B)$ is a Vitali cover of a full measure subset of $F_B$. We get a finite, disjoint subfamily $\scB_{k}(B) \subset \scB_{k}'(B)$ such that
	\begin{equation}\label{e:GB-good-msr}
		\Haus^s\left(F_B \setminus\textstyle\bigcup\scB_{k}(B)\right) \leq \delta_{k}\Haus^s(F_B)
	\end{equation}
	and a constant $\eta_k > 0$ such that $(1+10\eta_k)B' \cap (1+10\eta_k)B'' = \varnothing$ for any $B',B''\in \scB_k(B)$.
	
	Now, define
	\begin{equation*}
		\scB_{k} = \bigcup_{B\in\scB_{k-1}}\scB_{k}(B).
	\end{equation*}
	This completes the definition of $\scB_k$ for all $k\geq0$. We now define
	\begin{equation}
		\scB = \bigcup_{k=0}^\infty\scB_k.
	\end{equation}
	We will finish the proof by showing that the existence of the collection $\scB$ implies $X$ has infinite $s$-packing content. For each $k\in\N$ and $B\in \scB_k$, there exists $y_B\in B$ such that $B_X(y_B,\epsilon r(B))\subset B$ and $\dist(y_B,F_B) \geq \epsilon r(B)$. This condition guarantees that the ball $T_B = B_X(y_B,\epsilon r(B))$ satisfies $T_B\cap B' = \varnothing$ for any $B'\in\scB_{m}$ with $m > k$. On the other hand, the choice of $\eta_k$ ensures that any child ball $B'$ of $B\in\scB_m$ remains inside $(1+10\eta_m)B$, ensuring that $T_{B'}\cap B'' = \varnothing$ for any $B''\in\scB_m$ with $m \leq k$. This means that $T_B \cap T_{B'} = \varnothing$ for any balls $B,B'\in \cup_k\scB_k'$ with $B\not= B'$. We now want to show that the sum of the radii of the disjoint balls in $\{T_B\}_{B\in\scB}$ is infinite by lower-bounding the sum of the radii of balls in $\scB$.
	
	For each $k\in\N$, applying \eqref{i:GB-exists} and then \eqref{e:GB-good-msr} gives
	\begin{align*}
		\sum_{B\in\scB_k} \Haus^s(F_B) &= \sum_{B'\in\scB_{k-1}}\sum_{B\in\scB_k(B')}\Haus^s(F_B) \\
		&\geq  \sum_{B'\in\scB_{k-1}}\sum_{B\in\scB_k(B')}(1-\delta_k)\Haus^s(F_{B'}\cap B)\\
		&\geq (1-\delta_{k-1})^2\sum_{B'\in\scB_{k-1}}\Haus^s(F_{B'}).
	\end{align*}
	Using \eqref{i:GB-exists}, then iterating the previous inequality and using \eqref{e:B0-bound} gives
	\begin{align*}
		\sum_{B\in\scB_{k}(B)}(2r(B))^s \geq (1-\delta_k)^2\sum_{B\in\scB_k} \Haus^s(F_B) &\geq \bigg(\prod_{j=0}^{k}(1-\delta_j)^2\bigg)\sum_{B\in\scB_0}\Haus^s(F_B)\\
		&\gtrsim \Haus^s(F^\epsilon).
	\end{align*}
	We can now conclude that $X$ has infinite $s$-packing content.
	Indeed,
	\[
	\sum_{k=0}^\infty \sum_{B\in\scB_k}(2r(T_B))^s \gtrsim_{s,\epsilon} \sum_{k=0}^\infty\sum_{B\in\scB_k}r(B)^s\gtrsim_s \sum_{k=0}^\infty \Haus^s(F^\epsilon) = \infty.  \qedhere
	\]
\end{proof}

\section{Tangents and approximate tangents}

In this section we explore \emph{approximate tangents}, a notion weaker than that of tangents, where instead of the entire set $E\subset \R^d$ (for the Euclidean case) or the space $X$ (for the metric case), only ``substantial'' subsets are taken into account. Our definitions of approximate tangents provide generalizations of the well-known approximate tangent planes used in the study of rectifiable sets.

\subsection{Approximate Attouch-Wets tangents}
Intuitively, a set $T\subset\R^d$ is an approximate $s$-tangent of $E\subset\R^d$ at $x\in E$ if there exists a sequence of ``$s$-substantial snapshots'' $E_{x,r_k}\subset E$ containing $x$ such that $E_{x,r_k}$ approaches $T$ in both measure and Walkup-Wets distance.

\begin{definition}[approximate Attouch-Wets tangents]\label{def:approx-tan-plane}
	Let $E\subset\R^d$ and $s>0$. A set $T\in\mathfrak{C}(\R^d,\bz)$ is an \emph{approximate $s$-tangent} at $x\in E$ if
	there exists a sequence $r_i\rightarrow0$ and subsets $E_{x,r_i}\subset E$ with $x\in E_{x,r_i}$ such that
	\begin{equation}\label{e:euc-apptan-density}
		\lim_{i\rightarrow\infty}\frac{\Haus^s(E\cap B^d(x,Rr_i)\setminus E_{x,r_i})}{(2Rr_i)^{s}}= 0 \hbox{\quad for every $R > 0$}
	\end{equation}
	and
	\begin{equation}\label{e:euc-apptan-distance}
		\lim_{i\rightarrow\infty} \D^{\bz,R}[r_i^{-1}(E_{x,r_i}-x),T] = 0 \hbox{\quad for every $R > 0$.}
	\end{equation}
	If the above holds for \emph{all} sequences $r_i\rightarrow 0$, then we say that $T$ is a \emph{strong approximate $s$-tangent} at $x\in E$. We denote the set of all approximate $s$-tangents by $\appTanAW(E,x,s)$.
\end{definition}

\begin{proposition}[upgrading Attouch-Wets approximate $s$-tangents]\label{p:euc-apptan-upgrade}
	Let $E\subseteq\R^d$, and let $s > 0$. If $E$ has finite $s$-packing content, then the following hold at $\Haus^s$-a.e. $x\in E$:
	\begin{enumerate}
		\item \label{item:euc-apptan-equals-tan}
		$\appTanAW(E,x,s) = \TanAW(E,x)$,
		\item \label{item:euc-strong-apptan} if $T_x$ is a strong approximate $s$-tangent at $x$, then $\TanAW(E,x) = \{T_x\}.$
	\end{enumerate}
\end{proposition}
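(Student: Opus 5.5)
Throughout I take $E$ closed, as is implicit in the definition of $\TanAW$. The plan is to argue by contradiction via Proposition \ref{p:expansive}. A finite packing content forces $\Haus^s(E)\le P^s_\infty(E)<\infty$ by Remark \ref{rem:haus-pack}. It also forces $\Theta^s_*(E,x)>0$ at $\Haus^s$-a.e. $x\in E$, or else we argue on the subset where it holds.

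For the density claim, note first that $\Theta^{*s}(E,x)\le 1$ a.e. by the standard upper density theorem. Next, let $Z=\{x:\Theta^s_*(E,x)=0\}$ and suppose $\Haus^s(Z)>0$. At points of $Z$ we can choose arbitrarily small balls with $\Haus^s(E\cap B(x,r))\le \eta r^s$. A Vitali-type disjoint selection (the $5r$-covering lemma applied to $Z$) then gives disjoint balls with $\sum r^s\gtrsim \eta^{-1}\Haus^s(Z)$. Since $\eta$ is arbitrary, this contradicts finite packing content. Hence $\Theta^s_*(E,\cdot)>0$ a.e., which supplies the density hypothesis needed for Proposition \ref{p:expansive}.

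For (1), the inclusion $\TanAW(E,x)\subset\appTanAW(E,x,s)$ is trivial: take $E_{x,r_i}=E$. For the converse, let $N$ be the set of $x$ admitting some $T\in\appTanAW(E,x,s)\setminus\TanAW(E,x)$, and suppose $\Haus^s(N)>0$. Fix $x\in N$ with sequence $r_i$ and snapshots $E_{x,r_i}$, which we may replace by their closures. By Lemma \ref{lem:AW-char}, pass to a subsequence along which $r_i^{-1}(E-x)\to T'$ in the Attouch--Wets topology. Since $T'\ne T$ (as $T\notin\TanAW$), there is some $R$ with $\D^{\bz,R/4}[r_i^{-1}(E-x),T]>\epsilon$ for all large $i$. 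On the other hand, $\D^{\bz,R}[r_i^{-1}(E_{x,r_i}-x),T]<\epsilon/8$ eventually. Lemma \ref{l:mt-ex-lb}, with $\tilde X=r_i^{-1}(E_{x,r_i}-x)\subset X=r_i^{-1}(E-x)$ and $Y=T$, gives
\[\ex\big(E\cap B^d(x,Rr_i/2),\overline{E_{x,r_i}}\big)\ge \tfrac{\epsilon}{8}Rr_i.\]
So there is a point of $E$ in $B(x,Rr_i/2)$ at distance $\gtrsim r_i$ from the snapshot.

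This is precisely the expansivity of $F=N$ (or a positive-measure piece with uniform $\epsilon$, $R$) inside $X=E$, at scales $\rho_i=Rr_i$, snapshots $F_{x,\rho_i}=\overline{E_{x,r_i}}\cap B(x,\rho_i)\cap N$, and $\alpha=1/2$. Condition \eqref{item:expansive-dense} follows from \eqref{e:euc-apptan-density}. Intersecting with $N$ costs only a set that has density zero at a.e. point of $N$, by the Lebesgue density theorem for $\Haus^s\llcorner E$ with $\Haus^s(E)<\infty$; this requires the doubling-type bounds $0<\Theta_*\le\Theta^*<\infty$ just established. Condition \eqref{item:expands} follows because the ``far point'' above lies in $E$, hence in $X$, and shrinking the snapshot only increases the excess. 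One must also check measurability of $N$, or pass to a measurable positive-measure subset. Proposition \ref{p:expansive} then gives $P^s_\infty(E)=\infty$, a contradiction.

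For (2), strong approximate tangency means that every sequence $r_i\to0$ carries snapshots converging to $T_x$. Given any $T'\in\TanAW(E,x)$, realized along some sequence $r_i$, the same argument applied to that sequence shows that $T'\neq T_x$ on a positive-measure set would produce an expansive set, which is impossible. Alternatively, (2) follows from (1): every tangent is an approximate tangent, hence equals the limit $T_x$ along its sequence, because Attouch--Wets limits of the snapshots and of the full blow-ups coincide by the excess argument. The main obstacle is the bookkeeping in the reduction to expansivity. The constants $\epsilon$, $R$ and the scales depend on $x$; one handles this by countable decomposition into pieces with $\epsilon,R$ rational and by restricting snapshots to $N$ while preserving \eqref{item:expansive-dense}.
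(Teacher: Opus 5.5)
Your proposal is correct and follows essentially the paper's route. In both (1) and (2) you show the bad set is $s$-expansive in $E$ at scales $Rr_i$, using the snapshots $E_{x,r_i}\cap B^d(x,Rr_i)$ and the excess bound of Lemma \ref{l:mt-ex-lb}, and then contradict finite packing content via Proposition \ref{p:expansive}. The differences are minor. First, you prove $\Theta^s_*>0$ a.e.\ yourself rather than citing \cite{BLZ23}; with the $5r$-lemma you should choose the radii so that $\Haus^s(E\cap 5B)\le\eta r^s$, because the smallness must hold on the enlarged balls that actually cover $Z$. Second, intersecting the snapshots with the bad set needs no density theorem, since $N\cap B\setminus(E_{x,r}\cap N)\subset E\cap B\setminus E_{x,r}$.
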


\begin{proof}
	We begin with \eqref{item:euc-apptan-equals-tan}. Towards a contradiction, suppose there exists an $\Haus^s$-measurable set $F\subset E$ with $0 < \Haus^s(F) < \infty$ on which \eqref{item:euc-apptan-equals-tan} fails. We claim that $F$ is $s$-expansive in $E$. This will finish the proof since then Proposition \ref{p:expansive} will imply that $E$ has infinite $s$-packing content, contradicting our hypothesis. Indeed, we know from Corollary 2.4 in \cite{BLZ23} that finite $s$-packing content implies that $\Theta^{s}_*(F,x) > 0$ for $\Haus^s$-a.e. $x\in F$, meaning that $F$ would satisfy the hypotheses of the proposition.
	
	Let $x\in F$, and let $T\in\appTanAW(E,x,s)\setminus \TanAW(E,x)$. By definition of approximate tangents, there exists a sequence $r_i\rightarrow 0$ and subsets $E_{x,r_i}\subset E$ such that \[\lim_{i\rightarrow\infty}\frac{\Haus^s(E\cap B^d(x,Rr_i)\setminus E_{x,r_i})}{(2Rr_i)^s} = 0 \hbox{\quad for every $R > 0$,}\]
	and
	\[\lim_{i\rightarrow\infty} \sD^{\bz,R}[r_i^{-1}(E_{x,r_i}-x),T] = 0 \hbox{\quad for every $R > 0$.}\]
	Since $T\not\in\TanAW(E,x)$, Lemma \ref{lem:AW-char} implies there exist some $R_0 > 0$ and $\epsilon > 0$ such that
	\[\limsup_{i\rightarrow\infty} \sD^{\bz,R_0/4}[r_i^{-1}(E-x),T] > \epsilon.\]
	Set $s_i = R_0r_i$ and $E_{x,s_i} =  E_{x,r_i}\cap B(x,s_i)$. We claim that $E_{x,s_i}$ satisfy \eqref{item:expansive-dense} and \eqref{item:expands}. Since \eqref{e:euc-apptan-density} implies \eqref{item:expansive-dense} immediately, we only need to prove that \eqref{item:expands} holds.
	
	The previous two displayed statements imply that there exist infinitely many $i$ such that
	\begin{equation*}
		D^{\bz,R_0/4}[r_i^{-1}(E-x),T] > \epsilon \text{\quad and \quad} D^{\bz,R_0}[ r_i^{-1}(E_{x,r_i}-x),T] < \frac{\epsilon}{8}.
	\end{equation*}
	Applying Lemma \ref{l:mt-ex-lb}, we see that for each such $i$,
	\begin{align*}
		\ex(E\cap B^d(x,s_i/2), E_{x,s_i}) &= \ex(E\cap B^d(x,R_0r_i/2), \ E_{x,r_i})\\
		&=r_i\ex(r_i^{-1}(E-x) \cap B^d(\bz,R_0/2), r_i^{-1}(E_{x,r_i}-x))\\
		&> r_i \bigg(\frac{\epsilon R_0}{8}\bigg)= \frac{\epsilon}{8}s_i.
	\end{align*}
	To prove \eqref{item:euc-strong-apptan}, we proceed similarly by contradiction. We receive a set $F\subset E$ such that $0 < \Haus^s(F) < \infty$ on which \eqref{item:euc-strong-apptan} fails. It suffices to prove that $F$ is $s$-expansive in $E$ since applying Proposition \ref{p:expansive} would then give a contradiction.
	
	Let $x\in F$, and let the subsets $E_{x,r}\subset E$ for $r > 0$ produce the strong approximate tangent $T_x$ at $x$. By \eqref{item:euc-apptan-equals-tan}, we can also assume that $T_x\in \Tan_\AW(E,x)$ for every $x\in F$. The strong version of \eqref{e:euc-apptan-distance} says
	\[\lim_{r\rightarrow0} \D^{\bz,R}[r^{-1}(E_{x,r}-x),T_x] = 0 \hbox{\quad for every $R > 0$.}\]
	Since $T_x\in \TanAW(E,x)\not= \{T_x\}$, there exists $T_x'\not=T_x$ with $T_x'\in\Tan_\AW(E,x)$ and a sequence $r_i\rightarrow0$ such that
	\[\lim_{i\rightarrow\infty}\D^{\bz,R}[r_i^{-1}(E-x),T_x'] = 0.\]
	This means that there exist $R_0 > 0$ and $\epsilon > 0$, such that
	\[\limsup_{i\rightarrow\infty}\D^{\bz,R_0/4}[r_i^{-1}(E-x),T_x] > \epsilon.\]
	Indeed, otherwise Lemma \ref{lem:AW-char} would imply $r_i^{-1}(E-x)$ converges to both $T_x$ and $T_x'$ in the Attouch-Wets topology. Set $s_i = R_0r_i$ and let $E_{x,s_i} = E_{x,r_i}\cap B^d(x,s_i)$. We claim that the subsets $E_{x,s_i}$ satisfy \eqref{item:expansive-dense} and \eqref{item:expands}. Since \eqref{item:expansive-dense} follows immediately from \eqref{e:euc-apptan-density}, we only need to prove \eqref{item:expands} holds. We find infinitely many $i > 0$ such that
	\[D^{\bz,R_0/4}[r_i^{-1}(E-x),T_x] > \epsilon \text{\quad and \quad} D^{\bz,R_0}[ r_i^{-1}(E_{x,r_i}-x),T_x] < \frac{\epsilon}{8}.\]
	Applying Lemma \ref{l:mt-ex-lb} as in the previous argument, we see that
	\[ \limsup_{i\rightarrow\infty} s_i^{-1}\ex(E\cap B^d(x,s_i/2), E_{x,s_i}) > \frac{\epsilon}{8}. \qedhere\]
\end{proof}

\subsection{Approximate metric tangents} Here we define the metric analogue of approximate Attouch-Wets tangents.

\begin{definition}[approximate metric tangents]
	Let $X$ be a metric space. A pointed metric space $(Y,y)$ is an \emph{approximate $s$-tangent} at $x\in X$ if there exists a sequence $r_i\rightarrow0$ such that for every $i\geq 0$,
	there exists $X_{x,r_i}\subset X$ with $x\in X_{x,r_i}$ satisfying
	\begin{equation}\label{e:mt-apptan-density}
		\lim_{i\rightarrow\infty}\frac{\Haus^s(B_X(x,Rr_i)\setminus X_{x,r_i})}{(2Rr_i)^{s}}= 0 \hbox{\quad for every $R > 0$},
	\end{equation}
	and
	\begin{equation}\label{e:mt-apptan-distance}
		\lim_{i\rightarrow\infty} \D_\GH^{R}[(r_i^{-1}X_{x,r_i},x),(Y,y)] = 0 \hbox{\quad for every $R > 0$.}
	\end{equation}
	If the above holds for \emph{every} sequence $r_i\rightarrow 0$, then we say that $(Y,y)$ is a \emph{strong approximate $s$-tangent} at $x\in X$. We denote the set of pointed isometry classes of all approximate $s$-tangents at $x$ by $\appTanGH(X,x,s)$.
\end{definition}

When we try to upgrade approximate metric tangents, we run into a complication not found in the Euclidean case. We want to conclude that small distance of a substantial set $X_{x,r}$ to a tangent $Y$ and large distance of $B_X(x,r)$ to $Y$ imply an excess lower bound of $X$ over $X_{x,r}$. We need to use particular isometric embeddings of $X_{x,r}$ and $Y$ in some common space $Z$ for which the distance is small, and use this same embedding structure to get a Walkup-Wets lower bound for $B_X(x,r)$ and $Y$. The following lemma allows us to extend the isometric embedding from $X_{x,r}$ to $X$ into an extended common space $\hat{Z}$, allowing us to measure mutual distances of $X,X_{x,r},$ and $Y$ appropriately.

\begin{lemma}\label{l:extension-for-dgh}
	Let $(X,x), (Y,y), (Z,z)$ be pointed metric spaces, and let $\td{X}\subset X$ with $x\in\td{X}$. Let $\iota_{\td{X}}:(\td{X},x)\rightarrow(Z,z)$ and $\iota_Y:(Y,y)\rightarrow(Z,z)$, be pointed isometric embeddings, and identify $\td{X}$ and $Y$ with their images inside $Z$. The set
	\begin{equation}
		\hat{Z} = Z\sqcup(X\setminus\td{X})
	\end{equation}
	with the metric
	\begin{equation}
		d_{\hat{Z}}(u,v) = \begin{cases}
			d_X(u,v) & \hbox{\quad if $u,v\in X$},\\
			d_Z(u,v) & \hbox{\quad if $u,v\in Z$},\\
			\inf_{\td{x}\in\td{X}} d_X(u,\td{x}) + d_Z(\td{x},v) & \hbox{\quad if $u\in X,\ v\in Z$.}
		\end{cases}
	\end{equation}
	is a metric space. The pointed space $(\hat{Z},z)$ contains isometric copies of $X$, $\td{X}$, and $Y$, and $d_{\hat{Z}}(u,v) = d_Z(u,v)$ for all $u\in\td{X}$ and $v\in Y$. In particular, there exists an isometric embedding $\iota_X:X\rightarrow\hat{Z}$ such that $\iota_X(x')= \iota_{\td{X}}(x')$ for all $x'\in\td{X}$.
\end{lemma}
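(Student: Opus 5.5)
This is a gluing construction, so the plan is to verify the metric axioms for $d_{\hat{Z}}$ directly and then read off the embeddings. I interpret $\hat{Z}$ as $Z$ together with a disjoint copy of $X\setminus\td{X}$, with points of $\td{X}$ identified with their images in $Z$. The case "$u,v\in X$" then means both points lie in $\td{X}\cup(X\setminus\td{X})$. Since the two cases overlap on $\td{X}$, the first step is consistency: for $u,v\in\td{X}$ we have $d_X(u,v)=d_Z(u,v)$, because $\iota_{\td{X}}$ is an isometry.

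Similarly, for $u\in\td{X}$ and $v\in Z$ the mixed formula must agree with $d_Z(u,v)$. Taking $\td{x}=u$ shows the infimum is at most $d_Z(u,v)$. For the reverse, $d_X(u,\td{x})=d_Z(u,\td{x})$, so the triangle inequality in $Z$ gives $d_X(u,\td{x})+d_Z(\td{x},v)\ge d_Z(u,v)$. This shows that $d_{\hat{Z}}$ is well defined.

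Symmetry is immediate once the mixed case is extended symmetrically. For positivity, suppose $u\in X\setminus\td{X}$, $v\in Z$, and $d_{\hat{Z}}(u,v)=0$. Then there are $\td{x}_k\in\td{X}$ with $d_X(u,\td{x}_k)\to0$ and $d_Z(\td{x}_k,v)\to0$, which forces $u\in\overline{\td{X}}$. This is the main obstacle. I would handle it either by assuming $\td{X}$ is closed in $X$ (as it is in the applications, where the snapshots $X_{x,r}$ are closed) or by allowing a pseudometric and passing to the metric quotient, which does not affect the isometric statements. In the quotient, $u$ is identified with the limit point; for isometry of $X$ this is harmless because $d_X(u,w)$ is recovered as the limit of $d_X(\td{x}_k,w)$.

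The triangle inequality $d(u,w)\le d(u,v)+d(v,w)$ I would check case by case according to which of the three points lie in $X$ and which in $Z$. When all three points lie on one side, it is the triangle inequality in $X$ or in $Z$. For the mixed configurations, I fix near-optimal intermediate points $\td{x}\in\td{X}$ in the infimum.

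\begin{itemize}
\item $u,w\in X$ and $v\in Z$: we need $d_X(u,w)\le d_X(u,\td{x})+d_Z(\td{x},v)+d_Z(v,\td{x}')+d_X(\td{x}',w)$. Since $d_Z(\td{x},\td{x}')=d_X(\td{x},\td{x}')$, the triangle inequalities in $Z$ and in $X$ give this.
\item $u\in X$ and $v,w\in Z$: the bound $\inf_{\td{x}}[d_X(u,\td{x})+d_Z(\td{x},w)]\le d(u,v)+d_Z(v,w)$ follows from the triangle inequality in $Z$.
\item $u,v\in X$ and $w\in Z$: the same argument works using the triangle inequality in $X$.
\item $u,w\in Z$ and $v\in X$: we need $d_Z(u,w)\le d_Z(u,\td{x})+d_X(\td{x},v)+d_X(v,\td{x}')+d_Z(\td{x}',w)$. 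Combining $d_X(\td{x},v)+d_X(v,\td{x}')\ge d_X(\td{x},\td{x}')=d_Z(\td{x},\td{x}')$ with the triangle inequality in $Z$ gives this.
\end{itemize}

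Finally, the conclusions can be read off. Define $\iota_X$ as $\iota_{\td{X}}$ on $\td{X}$ and as the identity on $X\setminus\td{X}$. This map is an isometry by the first case of the definition together with the consistency check above, and it extends $\iota_{\td{X}}$. The inclusion $Z\subset\hat{Z}$ is isometric, so $\hat{Z}$ contains isometric copies of $Y$ and $\td{X}$, and $d_{\hat{Z}}=d_Z$ on $\td{X}\times Y$. Since $z=\iota_{\td{X}}(x)=\iota_Y(y)$, all of these embeddings are pointed.
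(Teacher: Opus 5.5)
Your proposal is correct and is essentially the paper's proof: both reduce the lemma to a case-by-case check of the triangle inequality using near-optimal intermediate points $\td{x}\in\td{X}$, and your four mixed configurations are exactly the two cases the paper writes out plus the two it dismisses as similar. Your extra checks are genuinely needed and not pedantic. The paper claims the triangle inequality is the only nontrivial axiom, but positivity fails for $u\in\overline{\td{X}}\setminus\td{X}$: take $X=Z=[0,1]$, $\td{X}=(0,1]$, $x=z=1$, with the inclusion $\td{X}\hookrightarrow Z$; then $d_{\hat{Z}}$ vanishes between the two copies of $0$. So one really must assume $\td{X}$ closed in $X$, as the applications implicitly do since Lemma~\ref{l:mt-ex-lb} and the definition of expansive sets use closed snapshots, or else pass to the metric quotient. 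Both fixes preserve the isometric-embedding conclusions. For full well-definedness you should also record the overlap case $u\in X\setminus\td{X}$, $v\in\td{X}$. There the mixed formula equals $d_X(u,v)$ by the same one-line argument, now using the triangle inequality in $X$.
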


\newcommand{\hrho}{\hat{\rho}}

\begin{proof}
	The only nontrivial claim is that $d_{\hat{Z}}$ is a metric, and the only nontrivial property to prove is that $d_{\hat{Z}}$ satisfies the triangle inequality. Since $d_{\hat{Z}}$ is a metric on $Z$ and $X$ separately, we only need to consider triples $u,v,w\in\hat{Z}$ for which either two points are in $X$ and one point is in $Z$ or for which two points are in $Z$ and one is in $X$.
	
	First, let $u,v\in X$ and let $w\in Z$. We have
	\begin{align*}
		d_{\hat{Z}}(u,w) + d_{\hat{Z}}(w,v) &= \inf_{\td{x}\in\td{X}}d_X(u,\td{x}) + d_Z(\td{x},w) + \inf_{\td{x}'\in\td{X}}d_X(v,\td{x}') +d_Z(\td{x}',w)\\
		&= \inf_{\td{x},\td{x}'\in\td{X}} d_X(u,\td{x}) + d_X(v,\td{x}') + (d_Z(\td{x},w) + d_Z(\td{x}',w))\\
		&\geq \inf_{\td{x},\td{x}'\in\td{X}}d_X(u,\td{x}) + d_X(v,\td{x}') + d_Z(\td{x},\td{x}')\\
		& =  \inf_{\td{x},\td{x}'\in\td{X}}d_X(u,\td{x}) + d_X(\td{x},\td{x}')+ d_X(\td{x}',v)\\
		&\geq d_X(u,v)= d_{\hat{Z}}(u,v).
	\end{align*}
	Now, suppose $u\in X$ and $v,w\in Z$. Then
	\begin{align*}
		d_{\hat{Z}}(u,w) + d_{\hat{Z}}(w,v) &= \inf_{\td{x}\in\td{X}}d_X(u,\td{x}) + d_Z(\td{x},w) + d_Z(w,v)\\
		&\geq \inf_{\td{x}\in\td{X}}d_X(u,\td{x}) + d_Z(\td{x},v)\\
		&= d_{\hat{Z}}(u,v).
	\end{align*}
	The case when $u,v\in Z$ and $w\in X$ is similar to the first case above, and the case when $u\in Z$ and $v,w\in X$ is similar to the second case above.
\end{proof}

\begin{proposition}[upgrading approximate metric tangents]\label{p:mt-apptan-upgrade}
	Let $X$ be a metric space, and let $s > 0$. If $X$ has finite $s$-packing content, then the following hold at $\Haus^s$-a.e. $x\in X$:
	\begin{enumerate}
		\item \label{item:mt-apptan-equals-tan}
		$\appTanGH(X,x,s) = \TanGH(X,x)$,
		\item \label{item:mt-unique-strong-apptan} if $(Y,y)$ is a strong approximate $s$-tangent at $x$, then 
		\begin{equation}
			\TanGH(X,x) = \{[(Y,y)]\}.
		\end{equation}
	\end{enumerate}
\end{proposition}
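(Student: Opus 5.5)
We follow the argument for Proposition \ref{p:euc-apptan-upgrade}, arguing by contradiction via expansivity. Suppose \eqref{item:mt-apptan-equals-tan} or \eqref{item:mt-unique-strong-apptan} fails on an $\Haus^s$-measurable set $F\subset X$ with $0<\Haus^s(F)<\infty$. Finite $s$-packing content gives $\Theta^s_*(F,x)>0$ at $\Haus^s$-a.e. $x\in F$, by \cite[Corollary 2.4]{BLZ23}. If we show that $F$ is $s$-expansive in $X$, then Proposition \ref{p:expansive} says $X$ has infinite $s$-packing content, which is a contradiction. We also need the inclusion $\TanGH(X,x)\subset\appTanGH(X,x,s)$, but this is trivial: take $X_{x,r_i}=X$. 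So for \eqref{item:mt-apptan-equals-tan} it suffices to handle points $x\in F$ where some $[(Y,y)]\in\appTanGH(X,x,s)\setminus\TanGH(X,x)$ exists.

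Fix such an $x$, with scales $r_i$ and sets $X_{x,r_i}$ satisfying \eqref{e:mt-apptan-density} and \eqref{e:mt-apptan-distance}. We may replace $X_{x,r_i}$ by its closure intersected with a ball. Passing to closures does not change $\D_\GH$, and the density condition only improves. Since $(Y,y)$ is not a tangent, Lemma \ref{l:dgh-basics} gives $R_0,\epsilon>0$ with
\[\limsup_i \D_\GH^{R_0/4}[(r_i^{-1}X,x),(Y,y)]>\epsilon.\]
Choose infinitely many $i$ for which this inequality holds and also $\D_\GH^{R_0}[(r_i^{-1}X_{x,r_i},x),(Y,y)]<\epsilon/8$. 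For each such $i$, pick pointed isometric embeddings of $r_i^{-1}X_{x,r_i}$ and $Y$ into a common space $(Z,z)$ that realize $\D^{z,R_0}<\epsilon/8$. Then apply Lemma \ref{l:extension-for-dgh} with $\td X=r_i^{-1}X_{x,r_i}$ to obtain $\hat Z$. This space contains $r_i^{-1}X$ isometrically, extends the embedding of $\td X$, and leaves the distances between $\td X$ and $Y$ unchanged. Because $\D_\GH$ is an infimum over all embeddings, in $\hat Z$ we still have $\D^{z,R_0/4}[r_i^{-1}X,Y]>\epsilon$, while $\D^{z,R_0}[\td X,Y]<\epsilon/8$. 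Lemma \ref{l:mt-ex-lb} then gives
\[\ex\big(r_i^{-1}X\cap B(x,R_0/2),\td X\big)\geq \epsilon R_0/8.\]
Since $X$ embeds isometrically in $\hat Z$, we can rescale: with $s_i=R_0r_i$ and $F_{x,s_i}=X_{x,r_i}\cap B_X(x,s_i)$ (closed), we get $\ex(B_X(x,s_i/2),F_{x,s_i})\ge \epsilon s_i/8$. This is \eqref{item:expands} with $\alpha=1/2$, and \eqref{item:expansive-dense} follows from \eqref{e:mt-apptan-density} with $R=R_0$.

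Definition \ref{def:expansive} requires $F_{x,r}\subset F\cap B$, whereas our sets lie in $X$. We handle this as in Lemma \ref{l:ex-helper}: intersect with $F$. Density is preserved because $\Haus^s(F\cap B\setminus F_{x,r})\le\Haus^s(B\setminus X_{x,r})$. The excess only grows when the second set shrinks. The resulting set is Borel, though possibly not closed; since the expansivity argument uses closedness only mildly, we would either take closed subsets via inner regularity of $\Haus^s$ restricted to $F$, or simply note that the excess estimate passes to them.

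For \eqref{item:mt-unique-strong-apptan}, by \eqref{item:mt-apptan-equals-tan} we may assume $[(Y,y)]\in\TanGH(X,x)$ and that there is another tangent $[(Y',y')]\ne[(Y,y)]$ along scales $r_i$. Then $\D_\GH^{R_0/4}[(r_i^{-1}X,x),(Y,y)]$ cannot tend to $0$ for all $R_0$: otherwise both $(Y,y)$ and $(Y',y')$ would be pointed GH limits of the same sequence, and hence isometric, since pGH limits of proper or complete spaces are unique up to pointed isometry. Because $(Y,y)$ is a strong approximate tangent, the approximating sets exist along this same sequence $r_i$. The excess argument above then applies verbatim.

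The main obstacle is the embedding step. The lower bound on $\D_\GH$ for $X$ has to be transferred to the particular embedding in which $\td X$ is close to $Y$, and Lemma \ref{l:extension-for-dgh} is exactly the tool for this. A secondary point needing care is that uniqueness of pointed GH limits requires completeness of the limits.
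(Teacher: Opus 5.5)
Your proposal is correct and follows essentially the same route as the paper: you show by contradiction that the failure set is $s$-expansive, use Lemma \ref{l:extension-for-dgh} to place $r_i^{-1}X$, $r_i^{-1}X_{x,r_i}$ and $Y$ in one common space so that Lemma \ref{l:mt-ex-lb} gives the excess bound at scale $s_i=R_0r_i$, and conclude via Proposition \ref{p:expansive}; your extra steps (the trivial inclusion $\TanGH(X,x)\subset\appTanGH(X,x,s)$, passing to closures, and intersecting with $F$ so the snapshots meet Definition \ref{def:expansive}) fill in details the paper leaves implicit. One small correction: pointed Gromov--Hausdorff limits are unique up to pointed isometry when the limits are proper (or when one is proper and the other complete), not for complete spaces in general. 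This is harmless in the intended application, where $Y$ is a finite-dimensional normed space, and the paper's proof of part (2) relies on the same implicit uniqueness.
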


\begin{proof}This is similar to the proof of Proposition \ref{p:euc-apptan-upgrade}, but there are some complications coming from the fact that convergence is measured using $\D_\GH$ rather than $\D$ as discussed prior to Lemma \ref{l:extension-for-dgh}. We begin with \eqref{item:mt-apptan-equals-tan}. Towards a contradiction, suppose there exists an $\Haus^s$-measurable set $F\subset X$ with $0 < \Haus^s(F) < \infty$ on which \eqref{item:mt-apptan-equals-tan} fails. We claim that $F$ is $s$-expansive in $X$. This will finish the proof since then Proposition \ref{p:expansive} will imply that $X$ has infinite $s$-packing content, giving a contradiction to our hypothesis. Indeed, we know from Corollary 2.4 in \cite{BLZ23} that finite $s$-packing content implies that $\Theta^{s}_*(F,x) > 0$ for $\Haus^s$-a.e. $x\in F$, meaning that $F$ would satisfy the hypotheses of the proposition.
	
	Let $x\in F$, and let $[(Y,y)]\in\appTanGH(X,x,s)\setminus \TanGH(X,x)$. Let $r_i\rightarrow0$ and  let $X_{x,r_i}\subset X$ produce the approximate tangent $(Y,y)$.
	Since $[(Y,y)]\not\in\TanGH(X,x)$, Lemma \ref{l:dgh-basics} implies that there exists $R_0 > 0$ and $\epsilon > 0$ such that
	\begin{equation}\label{eq:dgh-bdd-below}
		\limsup_{i\rightarrow\infty} \D_\GH^{R_0/4}[(r_i^{-1}X,x),(Y,y)] > \epsilon.
	\end{equation}
	Let $s_i = R_0r_i$ and define $X_{x,s_i} = X_{x,r_i} \cap B_X(x,s_i)$. We claim that $X_{x,s_i}$ satisfy \eqref{item:expansive-dense} and \eqref{item:expands}. Since \eqref{item:expansive-dense} follows immediately from \eqref{e:mt-apptan-density}, we only need to prove \eqref{item:expands} holds.
	
	Using the definition of $\D_\GH$ and the fact that
	\[\lim_{i\rightarrow\infty}\D_\GH^{R}[(r_i^{-1}X_{x,r_i},x),(Y,y)] = 0 \hbox{\quad for every $R > 0$},\]
	for any sufficiently large $i$, we can find a pointed metric space $(Z,z)$ and pointed isometries $\iota_1:(r_i^{-1}X_{x,r_i},x)\rightarrow(Z,z)$ and $\iota_2:(Y,y)\rightarrow(Z,z)$ for which
	\begin{equation}\label{eq:dgh-bdd-above}
		\D^{z,R_0}[\iota_1(r_i^{-1}X_{x,r_i}), \iota_2(Y)] < \frac{\epsilon}{8}.
	\end{equation}
	Applying Lemma \ref{l:extension-for-dgh}, we get a metric space $\hat{Z}$ extending $Z$ and an isometric embedding $\iota_X:(r_i^{-1}X,x)\rightarrow(Z,z)$ such that $\iota_X(x') = \iota_1(x')$ for all $x'\in r_i^{-1}X_{x,r_i}$. Identifying $r_i^{-1}X, r_i^{-1}X_{x,r_i},$ and $Y$ with their isometric copies inside $\hat{Z}$ and using \eqref{eq:dgh-bdd-below} and \eqref{eq:dgh-bdd-above}, we find infinitely many $i$ such that
	\begin{equation}\label{eq:mt-ww-bounds}
		\D^{z,R_0/4}[r_i^{-1}X, Y] > \epsilon \text{\quad and \quad} \D^{z,R_0}[ r_i^{-1}X_{x,r_i}, Y] < \frac{\epsilon}{8}.
	\end{equation}
	Lemma \ref{l:mt-ex-lb} then implies
	\begin{align*}\label{eq:mt-ex-lb}
		\ex(B_X(x,s_i/2), X_{x,s_i}) &= \ex(B_X(x,R_0r_i/2), X_{x,r_i})\\
		&=  r_i\ex(r_i^{-1}X \cap B_{\hat{Z}}(z,R_0/2),  r_i^{-1}X_{x,r_i})\\\nonumber
		&\geq r_i \bigg(\frac{R_0\epsilon}{8}\bigg)= \frac{\epsilon}{8}s_i.
	\end{align*}
	This finishes the proof of \eqref{item:expands}.
	
	To prove \eqref{item:mt-unique-strong-apptan}, we proceed similarly by contradiction. We receive a set $F\subset X$ with $0 < \Haus^s(F) < \infty$ on which \eqref{item:mt-unique-strong-apptan} fails. It suffices to prove that $F$ is $s$-expansive in $X$ since applying Proposition \ref{p:expansive} would then give a contradiction.
	
	 Let $x\in F$ and let subsets $X_{x,r}\subset X$ for $r > 0$ produce the strong approximate tangent $(Y_x,y)$ at $x\in F$. By \eqref{item:mt-apptan-equals-tan}, we can also assume that $[(Y_x,y)]\in\Tan_{\GH}(X,x)$ for every $x\in F$. By definition,
	\[\lim_{r\rightarrow0} \D_\GH^{R}[(r^{-1}X_{x,r},x),(Y_x,y)]  = 0 \hbox{\quad for every $R > 0$}.\]
	Since $\{[(Y_x,y)]\}\subsetneq\Tan_{\GH}(X,x)$, there exists a pointed isometry class $[(Y_x',y')]\not=[(Y_x,y)]$ such that $[(Y_x',y')]\in \Tan_\GH(X,x)$. Using Lemma \ref{l:dgh-basics}, there exists $r_i\rightarrow0$ such that
	\[\lim_{i\rightarrow\infty}\D_\GH^{R/4}[(r_i^{-1}X,x), (Y_x',y')] = 0 \hbox{\quad for every $R > 0$}.\]
	This means there exist $R_0 > 0$ and $\epsilon > 0$ such that
	\[\limsup_{i\rightarrow\infty}\D_{\GH}^{R_0/4}[(r_i^{-1}X,x),(Y_x,y)] > \epsilon.\]
	Indeed, otherwise $(r_i^{-1}X,x)$ would converge to both $(Y_x,y)$ and $(Y_x',y')$ in the pointed Gromov-Hausdorff topology. Set $s_i = R_0r_i$ and let $X_{x,s_i} = X_{x,r_i}\cap B(x,s_i)$. We claim that $X_{x,s_i}$ satisfy \eqref{item:expansive-dense} and \eqref{item:expands}. Since \eqref{item:expansive-dense} follows immediately from \eqref{e:mt-apptan-density}, we only need to prove \eqref{item:expands} holds.
	
	As in the proof of \eqref{item:mt-apptan-equals-tan}, for any $i > 0$ we use Lemma \ref{l:extension-for-dgh} to produce a common metric space $(\hat{Z},z)$ containing isometric copies of $r_i^{-1}X, r_i^{-1}X_{x,r_i},$ and $Y_x$ inside which
	\begin{equation}\label{eq:mt-ww-bounds-2}
		\D^{z,R_0/4}[r_i^{-1}X, Y_x] > \epsilon \text{\quad and \quad} \D^{z,R_0}[r_i^{-1}X_{x,r}, Y_x] < \frac{\epsilon}{8}
	\end{equation}
	Applying Lemma \ref{l:mt-ex-lb} as in the proof of \eqref{item:euc-apptan-equals-tan}, we see that
	\[ \limsup_{i\rightarrow\infty}s_i^{-1}\ex(B_X(x,s_i/2), X_{x,s_i}) > \frac{\epsilon}{8}. \qedhere\]
\end{proof}

\subsection{An Application to H{\"o}lder images} The problem of classifying sets that are H\"older rectifiable (that is, sets that are contained in the image of a H\"older curve) has attracted considerable attention since many well-known fractals can be parameterized in a H\"older way but not a Lipschitz one. See for example \cite{MM93,MM2000,BV,BNV19,BZ}. One of the reasons why the H\"older rectifiability problem is profoundly harder than its Lipschitz counterpart is that the tangents of H\"older curves can have much wilder behavior \cite{ShawV}. We end this section with the following theorem, which shows that for H\"older images, approximate tangents are exactly the tangents.

\begin{theorem}\label{thm:Holder}
	Let $s > 0,$ and $m,d\in\N$ with $0 < m < d$. If $f:\mathbb{B}^m\rightarrow \R^d$ is $\frac{s}{m}$-H{\"o}lder, then
	\begin{equation}
		\appTanAW(f(\mathbb{B}^m),x,s) = \TanAW(f(\mathbb{B}^m),x) \quad \text{for $\Haus^s$-a.e. $x\in f(\mathbb{B}^m)$.}
	\end{equation}
	If instead $f:\mathbb{B}^m\rightarrow X$ for some metric space $(X,d)$, then
	\begin{equation}
		\appTanGH(f(\mathbb{B}^m),x,s) = \TanGH(f(\mathbb{B}^m),x) \quad\text{for $\Haus^s$-a.e. $x\in f(\mathbb{B}^m)$.}
	\end{equation}
\end{theorem}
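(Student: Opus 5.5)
The plan is to reduce Theorem \ref{thm:Holder} to the upgrading results, Proposition \ref{p:euc-apptan-upgrade}\eqref{item:euc-apptan-equals-tan} in the Euclidean case and Proposition \ref{p:mt-apptan-upgrade}\eqref{item:mt-apptan-equals-tan} in the metric case. Both propositions give exactly the desired equalities at $\Haus^s$-a.e. point, provided the ambient set has finite $s$-packing content. So the whole proof comes down to one claim: if $f:\mathbb{B}^m\to X$ is $\frac{s}{m}$-H\"older (with $X=\R^d$ allowed), then $E=f(\mathbb{B}^m)$ has finite $s$-packing content.

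To prove the claim, I would follow Remark \ref{l:pushfwd-lr} and push Lebesgue measure forward. Set $\nu_f=f_*(\scL^n|_{\mathbb{B}^m})$ with $n=m$, so that $\nu_f(A)=\scL^m(f^{-1}(A))$. This is a finite Borel measure on $E$ with total mass $\scL^m(\mathbb{B}^m)$.

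Next I would check lower $s$-regularity. Fix $y=f(x)\in E$ and $0<r\le\diam E$, and let $H$ be the H\"older constant, so $d(f(u),f(v))\le H|u-v|^{s/m}$. Then $f(B^m(x,(r/H)^{m/s}))\subset B_X(y,r)$, which gives
\[ \nu_f(B_X(y,r))\ge \scL^m\big(B^m(x,(r/H)^{m/s})\cap\mathbb{B}^m\big)\gtrsim_m \min\{(r/H)^{m/s},1\}^m .\]
The radius $(r/H)^{m/s}$ may exceed $1$ when $r$ is large, which is why the minimum appears. I handle this with the cap $r\le \diam E\le H2^{s/m}$: the ratio $\min\{(r/H)^{m/s},1\}^m/ r^s$ is then bounded below by a constant depending only on $m,s,H$. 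We therefore get $\nu_f(B_X(y,r))\gtrsim r^s$ for all $r\le\diam E$. Here I am using that balls intersected with the unit ball keep a fixed fraction of their volume when the radius is at most $2$.

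Remark \ref{rem:n-pack} applies verbatim with exponent $s$ in place of $n$, so $P^s_\infty(E)\lesssim\nu_f(E)<\infty$. Since $f$ is continuous, $E$ is compact, hence closed and separable, which is what the propositions and the Attouch-Wets framework require. Applying the two upgrading propositions then gives both equalities. The hypothesis $m<d$ plays no role beyond excluding trivial cases.

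The main obstacle is only a technical point. Propositions \ref{p:euc-apptan-upgrade} and \ref{p:mt-apptan-upgrade} argue by contradiction on a set $F$ with $0<\Haus^s(F)<\infty$. If $\Haus^s(E)=0$ the statement is vacuous, and finite packing content already forces $\Haus^s(E)\le P^s_\infty(E)<\infty$ by Remark \ref{rem:haus-pack}. So that hypothesis is automatically met, and I expect no real difficulty.
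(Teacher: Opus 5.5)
Your strategy is the paper's. You prove that $E=f(\mathbb{B}^m)$ has finite $s$-packing content, then apply part (1) of Propositions \ref{p:euc-apptan-upgrade} and \ref{p:mt-apptan-upgrade}. Pushing $\scL^m$ forward and using Remark \ref{rem:n-pack} is only a repackaging of Lemma \ref{l:holder-packing-content}, which pulls disjoint balls $B_X(x_i,r_i)$ back to disjoint balls $B^m(f^{-1}(x_i),Cr_i^{s/m})$ and uses the finite $m$-packing content of $\mathbb{B}^m$.

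However, the one estimate you actually carry out is false for the exponent you use. If $d(f(u),f(v))\le H|u-v|^{s/m}$, then for small $r$
\[ \frac{\min\{(r/H)^{m/s},1\}^m}{r^s}=H^{-m^2/s}\,r^{\frac{m^2}{s}-s}\longrightarrow 0 \quad (r\to 0) \]
whenever $s<m$, so $\nu_f$ is not lower $s$-regular. This cannot be patched. For $s<m$ the inclusion $\mathbb{B}^m\hookrightarrow\R^d$ is Lipschitz, hence $\frac{s}{m}$-H\"older on the bounded ball, yet its image has infinite $s$-packing content: roughly $\rho^{-m}$ disjoint balls of radius $\rho$ give $\sum\rho^s\approx\rho^{s-m}\to\infty$. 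For $s>m$, every $\frac{s}{m}$-H\"older map of the ball is constant. Read literally, your argument therefore only covers $s=m$, with $s>m$ trivial.

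The fix is to read the hypothesis as $\frac{m}{s}$-H\"older. This is exactly what Lemma \ref{l:holder-packing-content}, and hence the paper's proof, assumes; it forces $s\ge m$, as for the $\frac{1}{s}$-H\"older curves in the introduction. Then $f(B^m(x,(r/H)^{s/m}))\subset B_X(f(x),r)$. The radius $(r/H)^{s/m}$ is at most $2$ when $r\le\diam E\le H2^{m/s}$, so $\nu_f(B_X(f(x),r))\gtrsim_m H^{-s}r^s$. With this correction the rest of your proof goes through unchanged and coincides with the paper's.

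The statement with exponent $\frac{s}{m}$ can genuinely fail when $s<m$. Take $m=3$, $s=2$, $d=4$, and build the map on $[0,1]^3$ as follows:
\begin{enumerate}
\item Place disjoint balls of radius $2^{-k}/10$ centred at height $2^{-k}$ above the points of $2^{-k}\mathbb{Z}^2$.
\item Map each ball by a tent onto a vertical segment of height $2^{-k}$ over its grid point.
\item Collapse a surrounding shell onto the base point, and project the rest onto the square.
\end{enumerate}
This gives a Lipschitz, hence $\frac23$-H\"older, image consisting of the unit square plus these segments. At every interior point of the square, the plane is an approximate $2$-tangent, since the segments are $\Haus^2$-null. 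It is not a tangent, since at every scale $r$ there is a segment of height at least $r/2$ within distance $r$.
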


For the proof of Theorem \ref{thm:Holder} we need the next lemma.

\begin{lemma}\label{l:holder-packing-content}
	Let $X$ be a metric space, let $s>0$, and let $m\in\N$. Let $f:\mathbb{B}^m\rightarrow X$ be $(m/s)$-H{\"o}lder. The set $f(\mathbb{B}^m)$ has finite $s$-packing content.
\end{lemma}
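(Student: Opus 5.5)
The plan is to mimic Remark \ref{l:pushfwd-lr}: build a finite, lower $s$-regular measure on $f(\mathbb{B}^m)$ as the push-forward of Lebesgue measure, then invoke Remark \ref{rem:n-pack}. (Here $f$ is read as $(m/s)$-H\"older, as in the lemma; with that exponent the scaling works out to exactly $r^s$.)

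\textbf{Setup.} Fix $L$ with $d_X(f(u),f(v)) \le L|u-v|^{m/s}$ for all $u,v\in\mathbb{B}^m$. Then $f$ is continuous, so $E=f(\mathbb{B}^m)$ is compact and separable, and $\diam E \le L\,2^{m/s}$. Define
\[ \nu(A) = \scL^m(f^{-1}(A)) \quad \text{for Borel } A\subset X. \]
This is well defined because $f$ is continuous, hence Borel. It is finite, with $\nu(E)=\scL^m(\mathbb{B}^m)<\infty$.

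\textbf{Lower $s$-regularity.} Let $y=f(u)\in E$ and $0<r\le \diam E$, and set $\rho=(r/L)^{s/m}$.
\begin{itemize}
\item The H\"older bound gives $f(B^m(u,\rho)\cap\mathbb{B}^m)\subset B_X(y,r)$.
\item Hence $\nu(B_X(y,r)) \ge \scL^m(B^m(u,\rho)\cap\mathbb{B}^m)$.
\item The restriction $r\le\diam E\le L2^{m/s}$ gives $\rho\le 2$.
\item For such $\rho$ and any $u\in\mathbb{B}^m$, convexity of the unit ball gives $\scL^m(B^m(u,\rho)\cap\mathbb{B}^m)\gtrsim_m \rho^m$. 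Indeed, a ball of radius comparable to $\rho$ fits inside $B^m(u,\rho)\cap\mathbb{B}^m$ along the segment from $u$ toward $\bz$.
\end{itemize}
Combining these,
\[ \nu(B_X(y,r))\gtrsim_m \rho^m = L^{-s}r^s, \]
so $\nu$ is lower $s$-regular with a constant depending only on $m$, $s$ and $L$.

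\textbf{Conclusion.} Remark \ref{rem:n-pack}, applied with exponent $s$ in place of $n$, yields $P^s_\infty(E)\lesssim \nu(E)<\infty$. That is, $E$ has finite $s$-packing content.

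The only mildly delicate point is the uniform lower bound on $\scL^m(B^m(u,\rho)\cap\mathbb{B}^m)$ for centers near $\partial\mathbb{B}^m$. This is handled by the cap $\rho\le2$, which comes from restricting to radii at most $\diam E$, together with the convexity argument above. Everything else follows Remark \ref{l:pushfwd-lr} directly.
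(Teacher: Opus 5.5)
Your proposal is correct and is essentially the paper's argument in dual form. The paper pulls the disjoint balls $B_X(x_i,r_i)$ back to disjoint balls $B^m(u_i, C r_i^{s/m})$ in the domain and invokes the finite $m$-packing content of $\mathbb{B}^m$. You instead push Lebesgue measure forward and apply Remark \ref{rem:n-pack} in the image, as in Remark \ref{l:pushfwd-lr}; your convexity bound for centers near $\partial\mathbb{B}^m$ and the cap $\rho\le 2$ make explicit what the paper leaves implicit when it requires the pulled-back radii to be at most $\diam \mathbb{B}^m$.
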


\begin{proof}
	Let $B_X(x_i,r_i)$, $i\in\N$, be mutually disjoint balls with $x_1.x_2,\dots \in f(\mathbb{B}^m)$ and $r_i < \diam{f(\mathbb{B}^m)}$ for all $i\in\N$. There exists a constant $C>0$ depending only on $m$, $s$, and $f$, such that the balls $B^m(f^{-1}(x_i), Cr_i^{s/m})$ are mutually disjoint. By Remark \ref{rem:n-pack}, (recall that $\mathbb{B}^m$ has finite Lebesgue $m$-measure), $\mathbb{B}^m$ has finite $m$-packing content and there exists $H>0$ depending only on $m$ such that
	\[ \sum_{i\in\N} r_i^s = C^{-m}\sum_{i\in\N} (C r_i^{s/m})^m < C^{-m}H.  \]
	Therefore, $f(\mathbb{B}^m)$ has finite $s$-packing content.
\end{proof}

\begin{proof}[{Proof of Theorem \ref{thm:Holder}}]
	Lemma \ref{l:holder-packing-content} implies that $f(\mathbb{B}^m)$ has finite packing $s$-content in either case. Therefore, the results follow from applying Propositions \ref{p:euc-apptan-upgrade} and \ref{p:mt-apptan-upgrade}.
\end{proof}

\section{Rectifiable metric spaces admitting finite packing content}
In this section we give the proofs of Theorem \ref{thm:main-euc} and Theorem \ref{thm:main-met}. Using Theorem \ref{t:sob-low-reg}, it suffices to prove that $n$-rectifiable sets with finite $n$-packing content have a unique $n$-plane Attouch-Wets tangent (or a unique $n$-dimensional normed space Gromov-Hausdorff tangent) $\Haus^n$-almost everywhere. In both the Euclidean and metric cases, the strategy is roughly as follows.
\begin{enumerate}
	\item\label{item:approx-tan} At $\Haus^n$-a.e. $x\in E$, construct an approximate tangent $n$-plane (or approximate tangent normed $n$-space) that is also a strong approximate tangent.
	\item\label{item:inf-content}Use the upgrade criteria to upgrade these approximate tangents to unique true tangents $\Haus^n$-a.e.
\end{enumerate}
The approximate tangent from Step 1 is a well-known consequence of Rademacher's theorem in the Euclidean case and of Kirchheim's theorem in the metric case. The primary new contribution in this section is applying the results from previous sections to achieve Step 2.
\subsection{Approximate tangent planes}
We say that an $n$-plane $V\subset\R^d$ is an \emph{approximate tangent $n$-plane} to $E\subset \R^d$ at $x\in E$ simply if $V$ is an approximate $n$-tangent to $E$ at $x$.

We begin by showing that an $n$-rectifiable set $E\subset\R^d$ has a unique approximate tangent plane, which is also a strong approximate tangent, at $\mathcal{H}^n$-almost every point. Several definitions of approximate tangent planes exist in the literature, and the almost everywhere existence and uniqueness of these approximate tangent planes are well-known results (See \cite[Theorem 15.19]{Ma95} and \cite[Theorem 11.16]{Simon}). In practice, the differences between definitions are minor for $n$-rectifiable sets since the planes they produce are equal almost everywhere. Our definition of approximate tangent plane resembles a weak version of that given in \cite{Ba22}.
Before we get the existence of approximate tangent $n$-planes, we need the following topological lemma.
\begin{lemma}[\cite{BHS23} Lemma 2.3.9] \label{l:top}
	Let $0 < \delta < \frac{1}{2}$. Let $x\in \R^n$ and $r > 0$. Let $f:B^n(x,r)\rightarrow\R^n$ be a continuous function such that $|f(x) -x| \leq \delta r$ for all $x\in\partial B^n(x,r)$. Then $f(B^n(x,r)) \supset B^n(x,(1-\delta r))$.
\end{lemma}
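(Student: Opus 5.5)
The plan is a degree-theoretic (equivalently, Brouwer no-retraction) argument. Read the hypothesis as $|f(z)-z|\le \delta r$ for all $z\in\partial B^n(x,r)$, and the conclusion as $f(B^n(x,r))\supset B^n(x,(1-\delta)r)$; the center $x$ and the boundary variable are notationally conflated in the statement.

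\emph{Reduction to the open ball.} Since $B^n(x,r)$ is compact and $f$ is continuous, $f(B^n(x,r))$ is compact, hence closed. So it suffices to show that every $y$ with $|y-x|<(1-\delta)r$ lies in the image. The boundary points of $B^n(x,(1-\delta)r)$ then follow by taking limits.

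\emph{Straight-line homotopy.} Fix such a $y$ and set $H(z,t)=(1-t)z+tf(z)$ for $z\in B^n(x,r)$ and $t\in[0,1]$. For $z\in\partial B^n(x,r)$ we have $|H(z,t)-z|=t|f(z)-z|\le\delta r$. Therefore
\[
|H(z,t)-y|\ \ge\ |z-x|-|x-y|-\delta r\ >\ r-(1-\delta)r-\delta r=0 .
\]
Thus $y\notin H(\partial B^n(x,r)\times[0,1])$. By homotopy invariance of the Brouwer degree, $\deg(f,U^n(x,r),y)=\deg(\mathrm{id},U^n(x,r),y)=1$, since $y$ lies in the open ball. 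A nonzero degree forces $y\in f(U^n(x,r))$, which gives the claim.

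\emph{Avoiding degree theory.} If one prefers not to invoke degree, argue by contradiction using Brouwer's theorem directly. Suppose $y\notin f(B^n(x,r))$. Extend $f$ to $\R^n$ by $F(z)=H(x+r\frac{z-x}{|z-x|},\,\tfrac{2r-|z-x|}{r})$ on the annulus $r\le|z-x|\le 2r$, which interpolates from $f$ on $\partial B^n(x,r)$ to the identity on $\partial B^n(x,2r)$. By the estimate above, $F$ omits $y$ on $B^n(x,2r)$. Then $z\mapsto y+2r\frac{F(z)-y}{|F(z)-y|}$ is a continuous map of $B^n(y,2r)$-type sphere data. More cleanly, $z\mapsto \frac{F(z)-y}{|F(z)-y|}$ restricted to $\partial B^n(x,2r)$ is homotopic to $z\mapsto\frac{z-y}{|z-y|}$, which has degree one. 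Yet it extends continuously over the ball, which contradicts the no-retraction theorem.

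\emph{Main obstacle.} The only delicate point is the strict inequality that keeps $y$ off the image of the boundary throughout the homotopy. This is exactly why one restricts to $|y-x|<(1-\delta)r$ first and recovers the closed ball afterwards by compactness. The assumption $\delta<1/2$ is not essentially needed beyond ensuring $(1-\delta)r>0$.
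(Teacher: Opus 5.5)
Your degree argument is correct. The paper does not prove this lemma; it quotes it from \cite{BHS23}, so there is no internal proof to compare against. Yours is the standard self-contained proof.

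Your reading of the statement is the right one: the hypothesis is $|f(z)-z|\le\delta r$ for $z\in\partial B^n(x,r)$, and the conclusion is $B^n(x,(1-\delta)r)\subset f(B^n(x,r))$. This is exactly how the lemma is used in Lemma~\ref{l:apptan-planes-exist}, with $\delta=1/10$ giving $B^n(y,r_0/2)$. The three steps are sound:
\begin{itemize}
\item The estimate $|H(z,t)-y|\ge r-|x-y|-\delta r>0$ on $\partial B^n(x,r)\times[0,1]$ keeps $y$ off the boundary throughout the homotopy.
\item Homotopy invariance gives $\deg(f,U^n(x,r),y)=1$, and the solution property then gives $U^n(x,(1-\delta)r)\subset f(B^n(x,r))$.
\item Closedness of the compact image recovers the closed ball.
\end{itemize}
You are also right that only $\delta<1$ is needed.

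The optional ``avoiding degree theory'' paragraph needs cleanup, though none of it affects the main proof:
\begin{itemize}
\item At $|z-x|=2r$ your formula gives $F(z)=x+(z-x)/2$, which is the radial map onto $\partial B^n(x,r)$, not the identity. It still works, since $y$ lies inside $B^n(x,r)$.
\item The sentence about ``$B^n(y,2r)$-type sphere data'' is garbled.
\item Invoking ``degree one'' reintroduces degree theory.
\end{itemize}

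A cleaner version needs no annulus. Suppose $y\notin f(B^n(x,r))$, and set $G(z)=(f(z)-y)/|f(z)-y|$, which is defined on all of $B^n(x,r)$. On $\partial B^n(x,r)$, the map $t\mapsto (H(z,t)-y)/|H(z,t)-y|$ is a homotopy from $(z-y)/|z-y|$ to $G(z)$. Sliding $y$ to $x$ along the segment stays away from $\partial B^n(x,r)$, so $(z-y)/|z-y|$ is further homotopic to $(z-x)/r$. Since $G|_{\partial B^n(x,r)}$ extends over the ball, it is null-homotopic. Hence the identity of the sphere would be null-homotopic, contradicting the no-retraction theorem.
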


\begin{lemma}\label{l:apptan-planes-exist}
	If $E\subset\R^d$ is $n$-rectifiable with $\Haus^n(E) < \infty$, then at $\cH^n$-a.e.-point $x\in E$, the set $E$ has a unique approximate tangent $n$-plane. This plane is a strong approximate $n$-tangent.
\end{lemma}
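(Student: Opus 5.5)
We reduce to a single Lipschitz graph-like piece, work at density points of that piece, and use the topological Lemma \ref{l:top} to show the piece genuinely fills out the plane at small scales.

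\textbf{Decomposition.} Since $E$ is $n$-rectifiable, write $E\subset N\cup\bigcup_i f_i(A_i)$ with $\Haus^n(N)=0$ and $f_i:A_i\subset[0,1]^n\to\R^d$ Lipschitz. Extend each $f_i$ to a Lipschitz map on $\R^n$ by Kirszbraun (or McShane coordinatewise). By Rademacher's theorem and the standard Lusin-type decomposition (area formula), we may refine to countably many compact pieces $K_j\subset A_{i}$ with the following properties:
\begin{itemize}
\item each $f_i|_{K_j}$ is bi-Lipschitz;
\item $Df_i(a)$ has rank $n$ for $a\in K_j$, up to a set whose image is $\Haus^n$-null;
\item the images $E_j=f_i(K_j)$ cover $\Haus^n$-almost all of $E$.
\end{itemize}

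\textbf{Choice of good points.} Fix $x=f(a)\in E_j$ satisfying four conditions:
\begin{itemize}
\item $a$ is a Lebesgue density point of $K_j$;
\item $f$ is differentiable at $a$ with injective $L=Df(a)$;
\item $x$ is a point of $\Haus^n$-density zero for $E\setminus E_j$, which holds a.e.\ since $\Haus^n(E)<\infty$ and by the standard density theorem $\Theta^{*n}(E\setminus E_j,x)=0$ for $\Haus^n$-a.e.\ $x\in E_j$;
\item $\Theta^{*n}(E,x)<\infty$.
\end{itemize}
Set $V_x=L(\R^n)$ and $E_{x,r}=E_j$ for every $r$.

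Condition \eqref{e:euc-apptan-density} then holds for \emph{every} sequence $r\to0$ by the zero density of $E\setminus E_j$. For \eqref{e:euc-apptan-distance}, split into two excesses.
\begin{itemize}
\item \emph{Points of $E_j$ are near $V_x$.} For $y=f(b)\in E_j\cap B(x,Rr)$, bi-Lipschitzness gives $|b-a|\lesssim Rr$. Differentiability then gives $|f(b)-x-L(b-a)|=o(r)$, so $\ex(r^{-1}(E_j-x)\cap B(\bz,R),V_x)\to0$.
\item \emph{Points of $V_x$ are near $E_j$.} Take $v=L(h)\in V_x\cap B(\bz,R)$, so $|h|\lesssim R$. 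By Lebesgue density of $K_j$ at $a$ there is $b\in K_j$ with $|b-(a+rh)|=o(r)$. Then $f(b)=x+rL(h)+o(r)$, so $\ex(V_x\cap B(\bz,R),r^{-1}(E_j-x))\to0$.
\end{itemize}
Both limits hold along all $r\to0$, so $V_x$ is a strong approximate $n$-tangent. In this direct argument the density point replaces the need for Lemma \ref{l:top}; alternatively, Lemma \ref{l:top} applied to $P_{V_x}\circ f$ on small balls gives surjectivity onto $V_x$, if one prefers to work with the whole Lipschitz image.

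\textbf{Uniqueness.} Suppose $T$ is another approximate $n$-tangent at $x$, with subsets $E'_{x,r_i}$. By the density conditions, $E'_{x,r_i}\cap E_j$ has nearly full measure in $E_j\cap B(x,Rr_i)$. We know $\Haus^n(E_j\cap B(x,\rho))\gtrsim\rho^n$ at such $x$, since $f$ is bi-Lipschitz and $a$ is a density point.

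First, $V_x\subset T$. Given $v\in V_x\cap B(\bz,R)$, the $\Haus^n$-mass of $E_j$ near $x+r_iv$ at scale $\eta r_i$ is $\gtrsim(\eta r_i)^n$, while $E_j\setminus E'_{x,r_i}$ has $o(r_i^n)$ mass. So $E'_{x,r_i}$ meets that region, and hence $v$ lies within $\eta$ of $T$. Since $T$ is closed and $\eta$ is arbitrary, $v\in T$.

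Second, $T\subset V_x$. A point $t\in T\setminus V_x$ would force points of $E'_{x,r_i}\subset E$ near $x+r_it$, at distance $\gtrsim r_i$ from $x+V_x$, which are therefore outside $E_j$. The bound $\Theta^{*n}(E\setminus E_j,x)=0$ alone does not forbid such isolated points, so I would tie this to the definition via the measure condition: such points lie in $E\setminus E_j$, which has negligible measure near $x$.

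The main obstacle is this second inclusion $T\subset V_x$. If the argument above fails, I would rely on the intended reading that "unique" refers to tangent planes. Then $T$ being an $n$-plane containing $V_x$ forces $T=V_x$.
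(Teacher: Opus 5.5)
Your proof is correct, reading ``unique'' as unique among $n$-planes, which is what the lemma asserts and what the paper proves. For existence you take a different route from the paper.

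\emph{Existence.} The paper fixes one Lipschitz map $f$ on $\R^n$ and a preimage $y$ of $x$ at which $Df(y)$ has full rank. It takes the whole image $E_{x,r}=f(B^n(y,r_0))$ and applies the topological Lemma~\ref{l:top} to $A_y^{-1}\circ\pi_{V_x}\circ f$, which shows that the projection of this image covers a ball of $V_x$. It then obtains \eqref{e:euc-apptan-density} by comparing the resulting mass lower bound with $\Theta^{*n}(E,x)\le 1$.

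You instead pass to compact pieces on which $f$ is bi-Lipschitz, fill out $V_x$ using Lebesgue density points in the parameter domain, and get \eqref{e:euc-apptan-density} directly from the density theorem applied to $E\setminus E_j$. Your route costs the Federer-type bi-Lipschitz decomposition but needs no topology. It also makes $E_{x,r}\subset E$ automatic, as the definition of approximate tangent requires. For this, state explicitly that you take parametrizations with $f_i(A_i)\subset E$, as in the paper's definition of rectifiability. The set $f(B^n(y,r_0))$, with $f$ extended to all of $\R^n$, does not guarantee this without intersecting with $E$.

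\emph{Uniqueness.} Both proofs use the same mass count: a piece of $E$ of measure $\gtrsim r^n$ near a point of $V_x$, against the $o(r^n)$ complement of the competing snapshots. The paper applies it at one point of $V_x$ far from a competing plane $V_x'$. You prove the slightly stronger statement that $V_x\subset T$ for every approximate $n$-tangent $T$, and conclude by dimension.

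\emph{The inclusion $T\subset V_x$.} Drop this attempted inclusion. It is not merely hard but false, and the measure condition cannot rescue it because that condition does not see $\Haus^n$-null sets. For example, adjoin to $[0,1]^n\times\{0\}\subset\R^{n+1}$ the countable set $\{(2^{-m}k,2^{-m}) : m\in\N,\ k\in\{0,\dots,2^m\}^n\}$. Choosing $E_{x,r}=E$, which satisfies \eqref{e:euc-apptan-density} trivially, gives non-planar approximate tangents at every point of the base. So your ``fallback'' is not a fallback: together with $V_x\subset T$ it is the complete uniqueness argument.
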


\begin{proof}
	Let $\{f_i\}$ be a countable collection of Lipschitz maps $f_i:\R^n\rightarrow\R^d$ such that $\Haus^n(E\setminus\bigcup_if_i(\R^n)) = 0$. Let $G\subset E$ be the set of points $x\in E$ such that
	\begin{enumerate}
		\item $\Theta^{*n}(E,x) \leq 1$,
		\item there exists $i > 0$ such that
		\begin{enumerate}
			\item $x$ is an $\Haus^n$-density point of $f_i(\R^n)$,
			\item there exists $y\in f_i^{-1}(x)$ such that $Df_i(y)$ exists and has full rank.
		\end{enumerate}
	\end{enumerate}
	By a Hausdorff measure density theorem \cite[Theorem 6.2]{Ma95}, Rademacher's theorem \cite[Theorem 7.3]{Ma95}, and a Sard-type theorem \cite[Theorem 7.6]{Ma95}, $G$ has full measure in $E$. Therefore, it suffices to fix $x\in G$ and show that there exists a unique approximate tangent of $E$ at $x$ that is an $n$-plane. For the rest of the proof, fix a Lipschitz function $f \equiv f_i$ and a point $y$ associated to $x$ as in the definition of $G$.
	
	Set $A_y:\R^n \to \R^d$ with $A_y(z) = f(y) + Df(y)(z-y) = x + Df(y)(z-y)$. Since $Df(y)$ has full rank, there exists $L_y > 1$ such that $A_y$ is an $L_y$-bi-Lipschitz map. Let $\epsilon = (10L_y)^{-1}$ and let $r_0 > 0$ be such that
	\begin{equation*}
		|f(z) - (f(y) + Df(y)(z-y))|\leq \epsilon |z-y|
	\end{equation*}
	for all $z\in B^n(y,r_0)$.  Let $V_x = A_y(\R^n)$ and define $\pi_{V_x}:\R^d\rightarrow V_x$ to be the orthogonal projection onto $V_x$. Consider the continuous map
	\begin{equation*}
		\td{f} := A_y^{-1}\circ\pi_{V_x}\circ f:\R^n\rightarrow\R^n.
	\end{equation*}
	If $z\in\partial B^n(y,r_0)$, then
	\begin{align*}
		|\td{f}(z) - z| &= |A_y^{-1}(\pi_{V_x}(f(z))) - A_y^{-1}(A_y(z))|\\
&\leq L_y|\pi_{V_x}(f(z)) - A_y(z)|\\
&= L_y|\pi_{V_x}(f(z) - A_y(z))|\\
&\leq L_y|f(z)-A_y(z)|\leq L_y\epsilon|z-y|\leq  \frac{r_0}{10}.
	\end{align*}
	Thus, by Lemma \ref{l:top}, $\td{f}(B^n(y,r_0))\supset B^n(y,\frac{1}{2}r_0)$. This gives
	\begin{align}\label{e:pi-surj}
		\pi_{V_x}(f(B^n(y,r_0))) &\supset A_y(B^n(y,r_0/2)) \supset B^d(x,L_y^{-1}r_0/2)\cap V_x.
	\end{align}
	We define $E_{x,r} = f(B^n(y,r_0))$ for all $r > 0$. Using the fact that $V_x$ is a cone centered at $x$ and then the definition of $Df(y)$, it is straightforward to show that for any $R > 0$,
	\begin{align}\label{e:app-WW}
		\lim_{r\rightarrow0}\D^{\bz,R}[r^{-1}(E_{x,r}-x),V_x-x] &=\nonumber \lim_{r\rightarrow0}\D^{\bz,Rr}[E_{x,r}-x, r(V_x-x)]\\ \nonumber
		&= \lim_{r\rightarrow0}\D^{\bz,Rr}[E_{x,r}-x, V_x-x]\\
		&= \lim_{r\rightarrow 0}\D^{x,Rr}[E_{  x,r}, V_x] = 0.
	\end{align}
	This verifies the strong version of \eqref{e:euc-apptan-distance} holds. For the density conclusion, recall that we chose $x$ to be a density point for $f(\R^n)$ and \eqref{e:pi-surj} combined with the vanishing relative Walkup-Wets distance implies that, for any $R > 0$,
	\begin{align*}
		\Haus^n(E_{x,r}\cap B^d(x,Rr)) &\geq \Haus^n(\pi_{V_x}(f(B^n(y,r_0)))\cap B^d(x,Rr))\\
		&\geq \Haus^n(V_x\cap B^d(x,(1-\delta(Rr))Rr))
	\end{align*}
	where we take $r < (10L_y)^{-1}r_0$ and $\delta(r)\rightarrow 0 $ as $r\rightarrow 0$. This means that
	\begin{equation}\label{e:bx-haus}
		\lim_{r\rightarrow0}\frac{\cH^n(E_{x,r}\cap B^d(x,Rr))}{(2Rr)^n} = \lim_{r\rightarrow0}\frac{\cH^n(E\cap B^d(x,Rr))}{(2Rr)^n} = 1,
	\end{equation}
	which shows that $\lim_{r\rightarrow0}(2Rr)^{-n}\cH^n(E\cap B^d(x,Rr)\setminus E_{x,r}) = 0$.	This shows that $V_x-x$ is an approximate tangent $n$-plane that is also a strong approximate tangent.
	
	We now show that $V_x$ is unique. Towards a proof by contradiction, suppose that there exists an affine $n$-plane $V_x'\not=V_x$ such that $V_x'-x$ is an approximate tangent $n$-plane to $E$ at $x$. Let $\epsilon > 0$ be such that $\sD^{x,r}[V_x,V_x'] > 10\epsilon$ for all $r > 0$ and let $r_k\rightarrow0$ and subsets $E_{x,r_k}'$ for $k > 0$ satisfy conditions \eqref{e:euc-apptan-density} and \eqref{e:euc-apptan-distance} for $V_x'$. Using \eqref{e:pi-surj} and \eqref{e:app-WW}, we can conclude that for any sufficiently small $r > 0$ there exists $v\in V_x$ with $|v-x| = r/2$, $\dist(v,V_x') \geq 5\epsilon r$, and
	\begin{equation}
		\pi_{V_x}(E_{x,r}\cap B^d(V_x,n^{-1}\epsilon r)) \supset V_x\cap B^d(v,\epsilon r).
	\end{equation}
	In particular, this shows that
	\begin{equation}
		\Haus^n(E_{x,r}\cap B^d(v,\epsilon r)) \gtrsim_n \epsilon^nr^n \hbox{\quad for all sufficiently small $r>0$.}
	\end{equation}
	Since $E_{x,r}\subset E$, this means we can argue similarly as in \eqref{e:app-WW} to find $r_k > 0$ small enough such that
	\[\sD^{x,r_k}[E_{x,r_k}',V_x'] = \sD^{\bz,1}[r_k^{-1}(E'_{x,r_k}-x),V_x'-x] < \epsilon,\]
	and there exists $y\in E_{x,r_k}'\cap (E_{x,r_k}\cap B^d(v,\epsilon r_k))$. Since $y\in B^d(x,r_k)$, this gives the following contradiction:
	\begin{equation*}
		\epsilon r_k > \ex(E_{x,r_k}',V_x') \geq \dist(y,V_{x}') \geq \dist(v,V_x') - |y-v| \geq 4\epsilon r_k. \qedhere
	\end{equation*}
\end{proof}
\subsection{Approximate tangent normed spaces}
We say that a normed space $(\R^n,\|\cdot\|)$ is an \emph{approximate tangent normed $n$-space} to a metric space $X$ at $x\in X$ simply if it is an approximate $n$-tangent to $X$ at $x$. For a related notion of tangent space and a related theorem characterizing rectifiable metric spaces in terms of that notion, see \cite[Theorem 1.2]{Ba22}. The existence of these tangents in $n$-rectifiable metric spaces is largely handled by the following theorem due to Kirchheim.
\begin{theorem}[{\cite[Theorem 9]{Ki94}}]\label{t:kirch}
	Let $X$ be $n$-rectifiable with $\Haus^n(X)<\infty$. For $\Haus^n$-a.e. $x\in X$, there exists a norm $\|\cdot\|_x$ on $\R^n$, a map $\phi_x:X\rightarrow\R^n$, and a closed set $X_x\subset X$ such that $\phi_x(x) = \bz,$
	\begin{equation}\label{e:ax-dens}
		\lim_{r\rightarrow 0}\frac{\Haus^n(X_x\cap B_X(x,r))}{(2r)^n} = \lim_{r\rightarrow 0}\frac{\Haus^n(B_X(x,r))}{(2r)^n} = 1,
	\end{equation}
	and
	\begin{equation}\label{e:bx-isom}
		\limsup_{r\rightarrow 0}\left\{\left|1 - \frac{\|\phi_x(y) - \phi_x(z)\|_x}{d_X(y,z)}\right| : y,z\in X_x\cap B_X(x,r),\ y\not= z\right\} = 0.
	\end{equation}
	In particular, for every $\delta > 0$, there exists some $r_\delta > 0$ such that for every $r < r_\delta$,
	there exists a $\delta r$-pGHA from $X_x\cap B(x,r)$ to $B_{\|\cdot\|_x}(\bz,r)$.
\end{theorem}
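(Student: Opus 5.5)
The statement up to the final sentence is Kirchheim's theorem \cite[Theorem 9]{Ki94}, which I will take as given: for $\Haus^n$-a.e.\ $x$ it supplies the norm $\|\cdot\|_x$, the map $\phi_x$ and the closed set $X_x$ satisfying \eqref{e:ax-dens} and \eqref{e:bx-isom}. What remains is the ``in particular'' clause, which I would deduce from \eqref{e:ax-dens} and \eqref{e:bx-isom} alone. Fix such an $x$ and write $\|\cdot\| = \|\cdot\|_x$, $\phi=\phi_x$. Let $\eta(r)$ denote the quantity inside the $\limsup$ in \eqref{e:bx-isom}, so $\eta(r)\to0$. For $y,z\in X_x\cap B_X(x,r)$,
\[
\big|\|\phi(y)-\phi(z)\| - d_X(y,z)\big| \le \eta(r)\, d_X(y,z)\le 2\eta(r) r.
\]
Taking $z=x$ (using $\phi(x)=\bz$) shows $\phi(X_x\cap B_X(x,r))\subset B_{\|\cdot\|}(\bz,(1+\eta(r))r)$.

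To get a map into the ball of radius exactly $r$, I compose $\phi$ with the radial retraction $\rho_r(v)=v\min\{1,r/\|v\|\}$. On $B_{\|\cdot\|}(\bz,(1+\eta)r)$ this moves points by at most $\eta r$, and it fixes $\bz$. Hence $\psi=\rho_r\circ\phi$ is a pointed Borel map $X_x\cap B_X(x,r)\to B_{\|\cdot\|}(\bz,r)$ with additive distortion at most $4\eta(r)r$. This handles the first condition of a pGHA once $4\eta(r)<\delta$.

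The main step is the density condition: every $u\in B_{\|\cdot\|}(\bz,r)$ must lie within $\delta r$ of $A=\psi(X_x\cap B_X(x,r))$. I would argue by measure. Since $\psi$ is $(1+4\eta)$-co-Lipschitz up to the small additive error (equivalently $\phi$ is $(1-\eta)^{-1}$-co-Lipschitz), we have $\Haus^n(\phi(X_x\cap B_X(x,r)))\ge(1+\eta)^{-n}\Haus^n(X_x\cap B_X(x,r))$. By \eqref{e:ax-dens} this is $\ge (1-o(1))(2r)^n$; I would work with $\phi$ and its image inside $B(\bz,(1+\eta)r)$ to avoid the retraction's effect on measure.

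Now I invoke the fact (Busemann's isodiametric equality in finite-dimensional normed spaces) that $\Haus^n$, normalized by $\diam^n$ as in this paper, gives the unit ball of any $n$-dimensional normed space measure exactly $2^n$. Suppose some $u\in B(\bz,r)$ had $\dist(u,A)>\delta r$. Then, moving $u$ slightly toward $\bz$, the image of $\phi$ would miss a ball $B(u',\delta r/4)\subset B(\bz,(1+\eta)r)$. This would force
\[
\Haus^n(\phi(X_x\cap B_X(x,r)))\le 2^n\big((1+\eta)^n-(\delta/4)^n\big)r^n,
\]
which contradicts the lower bound $(1-o(1))(2r)^n$ once $r<r_\delta$ is small. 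I expect this step to be the main obstacle: it needs the exact normalization $\Haus^n(B_{\|\cdot\|}(\bz,1))=2^n$ and care with the error terms. Combining the two conditions gives a $\delta r$-pGHA for all $r<r_\delta$.
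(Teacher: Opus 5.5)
Your proof is correct. It also supplies something the paper does not: the paper cites \cite[Theorem 9]{Ki94} for \eqref{e:ax-dens} and \eqref{e:bx-isom} and then asserts the final ``in particular'' clause with no derivation.

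\textbf{The two pGHA conditions.} The distortion condition is indeed immediate from \eqref{e:bx-isom} once you apply your radial retraction. The almost-surjectivity condition does not follow from the bi-Lipschitz estimate alone, and you correctly identify what replaces it. The density in \eqref{e:ax-dens} is exactly $1$, which matches the isodiametric normalization $\Haus^n(B_{\|\cdot\|_x}(\bz,1))=2^n$. So the image of $\phi_x$ cannot omit a ball of radius comparable to $\delta r$. The normalization itself follows from Brunn--Minkowski, since $\tfrac12(S-S)\subset B(\bz,\diam S/2)$.

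\textbf{Comparison with reopening Kirchheim's construction.} Another way to get near-surjectivity is to go inside Kirchheim's construction. There $X_x$ lies in a Lipschitz image $f(A)$ with $x=f(a)$, where $a$ is a Lebesgue density point of $A$ and $\|\cdot\|_x$ is the metric differential of $f$ at $a$. Near-surjectivity then comes from the density of $A$ at $a$. Your route uses only the stated properties \eqref{e:ax-dens} and \eqref{e:bx-isom}. The cost is the exact-normalization fact, and the argument really does need density exactly $1$, not merely positive density.

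\textbf{Small corrections.}
\begin{itemize}
\item The co-Lipschitz bound gives $\Haus^n(\phi_x(X_x\cap B_X(x,r)))\ge(1-\eta)^n\,\Haus^n(X_x\cap B_X(x,r))$, not the factor $(1+\eta)^{-n}$. This is harmless, since both factors tend to $1$.
\item You use $x\in X_x$ when you set $z=x$, and the map must be pointed. State that this follows from $X_x$ being closed together with $\Haus^n(X_x\cap B_X(x,r))>0$ for all small $r$.
\item The pGHA definition requires a Borel map. This holds because $\phi_x$ is bi-Lipschitz on $X_x\cap B_X(x,r)$ once $\eta(r)<1$, and the retraction is continuous.
\item For explicit quantifiers, require the following.
\begin{itemize}
\item $4\eta(r)\le\delta$, for the distortion bound.
\item $\eta(r)\le\delta/2$, so that $B(u',\delta r/4)\subset B(u,\delta r/2)$ misses $\phi_x(X_x\cap B_X(x,r))$ when $u'=(1-\delta/4)u$.
\item $r$ small enough that $(1-\eta)^n\,\Haus^n(X_x\cap B_X(x,r))/(2r)^n>(1+\eta)^n-(\delta/4)^n$.
\end{itemize}
\end{itemize}
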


Although Kirchheim's result gives the existence and uniqueness of approximate tangents in the sense of Theorem \ref{t:kirch}, the proof of uniqueness for our weaker notion of approximate tangent requires more work than what is already present in \cite{Ki94}.

\begin{lemma}\label{c:apptan-normed-spaces-exist}
	If $X$ is an $n$-rectifiable metric space with $\cH^n(X) < \infty$, then at $\cH^n$-a.e. $x\in X$ the space $X$ has an approximate tangent normed $n$-space that is unique up to isometries. This normed space is a strong approximate $n$-tangent.
\end{lemma}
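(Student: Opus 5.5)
The plan is to read off existence and the strong property directly from Kirchheim's theorem, and to get uniqueness by comparing a second approximate tangent normed space with the Kirchheim one on a common large subset.

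\emph{Existence and the strong property.} Fix $x$ in the full-measure set given by Theorem \ref{t:kirch}, with norm $\|\cdot\|_x$, map $\phi_x$ and closed set $X_x$. For every $r>0$ take the snapshot $X_{x,r}:=X_x$.
\begin{itemize}
\item The density condition \eqref{e:mt-apptan-density} follows from \eqref{e:ax-dens} applied at radius $Rr$, since $\Haus^n(B_X(x,Rr)\setminus X_x)=\Haus^n(B_X(x,Rr))-\Haus^n(X_x\cap B_X(x,Rr))=o((2Rr)^n)$.
\item For the distance condition, the last assertion of Theorem \ref{t:kirch} gives a $\delta r$-pGHA from $X_x\cap B(x,r)$ to $B_{\|\cdot\|_x}(\bz,r)$ for all $r<r_\delta$. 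Hence $\xi_{X_x,Y}(x,\bz,r)\to0$ as $r\to0$, where $Y=(\R^n,\|\cdot\|_x,\bz)$.
\item A normed space is a metric cone in the sense of Definition \ref{def:metric-cones}, with rays $t\mapsto tv$ and similarities $v\mapsto\lambda v$. So Lemma \ref{l:mt-cones-and-pGHAs} yields $\D_\GH^R[(r^{-1}X_x,x),Y]\to0$ as $r\to0$ for every $R>0$.
\end{itemize}
Since these limits are taken over all $r\to0$, $Y$ is a strong approximate $n$-tangent.

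\emph{Uniqueness among normed spaces.} Suppose $Y'=(\R^n,\|\cdot\|',\bz)$ is another approximate $n$-tangent at $x$, realized by scales $r_k\to0$ and sets $X'_k:=X'_{x,r_k}$. Fix $R$ and set $X''_k:=X_x\cap X'_k\cap B_X(x,Rr_k)$. By the two density conditions and $\Theta^{*n}\le1$, the set $X''_k$ misses only $o(r_k^n)$ of the $\Haus^n$-measure of $B_X(x,Rr_k)$. The proof then proceeds in three steps.
\begin{enumerate}
\item \emph{$X''_k$ is $o(r_k)$-dense in $X_x\cap B_X(x,Rr_k)$.} Let $z\in X_x\cap B(x,Rr_k)$ and $\eta>0$. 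Since $\phi_x$ is $(1+o(1))$-bi-Lipschitz on $X_x\cap B(x,2Rr_k)$, and its image is $o(r_k)$-dense in the normed ball (again by the pGHA), we get $\Haus^n(X_x\cap B(z,\eta r_k))\gtrsim_{n}\eta^nr_k^n$. This lower bound comes from comparing $\Haus^n$ on the image with Lebesgue measure in $(\R^n,\|\cdot\|_x)$. Because the complement of $X''_k$ has measure $o(r_k^n)$, the ball $B(z,\eta r_k)$ must meet $X''_k$ for $k$ large.
\item \emph{Comparison of the two normed balls.} Using $\phi_x$, the rescaled set $r_k^{-1}X''_k$ is GH-close to the whole ball $B_{\|\cdot\|_x}(\bz,R)$. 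Using the embeddings that witness $\D_\GH^R[(r_k^{-1}X'_k,x),Y']\to0$, the same set maps almost isometrically into $B_{\|\cdot\|'}(\bz,R+o(1))$. Composing gives maps $g_k:B_{\|\cdot\|_x}(\bz,R)\to(\R^n,\|\cdot\|')$ that are $o(1)$-isometries onto their images and fix $\bz$.
\item \emph{Passing to the limit.} By Arzel\`a--Ascoli (as in the proof of Theorem \ref{t:two-tangents}), a subsequence of the $g_k$ converges to an isometric embedding $g:B_{\|\cdot\|_x}(\bz,R)\to(\R^n,\|\cdot\|')$ with $g(\bz)=\bz$. An isometric embedding between $n$-dimensional normed balls of equal radius is onto the ball $B_{\|\cdot\|'}(\bz,R)$, by invariance of domain (or by Lemma \ref{l:top}-type degree arguments, or a volume comparison). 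Letting $R\to\infty$ through a diagonal sequence gives a pointed isometry $(\R^n,\|\cdot\|_x)\cong(\R^n,\|\cdot\|')$.
\end{enumerate}

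\emph{Expected main obstacle.} The delicate step is the density claim in step 1. It needs a quantitative lower bound on $\Haus^n(X_x\cap B(z,\eta r))$ at every point $z$ near $x$, not just at $x$ itself. This requires an almost-onto property of $\phi_x$ on small balls. I would try to get it from the pGHA surjectivity combined with the bi-Lipschitz control \eqref{e:bx-isom}, transferring Lebesgue measure of $\|\cdot\|_x$-balls to $\Haus^n$ on $X_x$.

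Uniqueness must be understood among normed spaces, as in the definition of approximate tangent normed space. Adding isolated points to $X'_k$ does not affect density, so it could produce non-normed approximate tangents.
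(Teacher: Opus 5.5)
Your proposal is correct and follows the paper's route. Existence and the strong property come from Kirchheim's theorem with $X_{x,r}=X_x$ together with Lemma \ref{l:mt-cones-and-pGHAs}, and uniqueness comes from composing approximate isometries through the common large set $X_x\cap X'_{x,r_i}$, passing to a limiting pointed isometric embedding by a diagonal (Arzel\`a--Ascoli) argument, and getting surjectivity from invariance of domain. Your Step 1 spells out what the paper asserts in one line, namely the existence of the pGHA $\varphi_{\epsilon,Rr_i}$ onto $B_X(x,Rr_i)\cap X_{x,r_i}\cap X'_{x,r_i}$. There, the lower bound $\Haus^n(X_x\cap B(z,\eta r))\gtrsim\eta^n r^n$ should come from \eqref{e:ax-dens} together with $\Haus^n(B_{\|\cdot\|_x}(\bz,r))=(2r)^n$: these force $\phi_x(X_x\cap B_X(x,r))$ to fill the normed ball up to measure $o(r^n)$, whereas $o(r)$-density of the image alone does not give a measure bound.
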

\begin{proof}
	It suffices to get the existence of approximate tangents at every point of the good set $G$ produced by Theorem \ref{t:kirch}. Fix $x\in G$ and let $X_x,\phi_x,$ and $\|\cdot\|_x$ be as in Theorem \ref{t:kirch}. We claim that $(\R^n,\|\cdot\|_x,\bz)$ is our desired tangent at $x$ using $X_{x,r}= X_x$ as ``substantial sets'' for any $r > 0$. Indeed, equation \eqref{e:ax-dens} implies the strong version of \eqref{e:mt-apptan-density} while the final statement of the theorem implies
	\[ \lim_{r\rightarrow0}\xi_{X_{x},(\R^n,\|\cdot\|_x)}(x,\bz,r) = 0.\]
	Since $(\R^n,\|\cdot\|_x,\bz)$ is a metric cone, the final statement of Lemma \ref{l:mt-cones-and-pGHAs} implies the strong version of \eqref{e:mt-apptan-distance} holds, so $(\R^n,\|\cdot\|_x,\bz)$ is a strong approximate $n$-tangent.
	
	We now show that $(\R^n,\|\cdot\|_x,\bz)$ is the unique approximate tangent $n$-normed space at $x$ up to isometry. Let $\|\cdot\|_x'$ be a norm such that $(\R^n,\|\cdot\|_x',\bz)$ is an approximate tangent $n$-normed space at $x$, and let $r_i\rightarrow0$ and $X_{x,r_i}\subset X$ be subsets satisfying \eqref{e:mt-apptan-density} and \eqref{e:mt-apptan-distance}. We will construct an isometry $f:B_{\|\cdot\|_x}(\bz,1)\rightarrow B_{\|\cdot\|_x'}(\bz,1)$.
	
	Using \eqref{e:ax-dens} and \eqref{e:mt-apptan-density} with for $X_{x,r_i}'$, we see that
	\begin{equation}
		\lim_{r\rightarrow0} \frac{\Haus^n(B_X(x,Rr_i)\setminus(X_{x,r_i}\cap X_{x,r_i}'))}{(2Rr_i)^n} = 0 \hbox{\quad for every $R > 0$}.
	\end{equation}
	Using the bi-Lipschitz estimate \eqref{e:bx-isom}, this gives the following statement: For every $R, \epsilon > 0$, there exists $r_\epsilon > 0$ for which $r_i < r_\epsilon$ implies the existence of a $\epsilon^2 Rr_i$-pGHA $\varphi_{\epsilon,Rr_i}:B_{\|\cdot\|_x}(\bz,Rr_i)\rightarrow B_X(x,Rr_i)\cap(X_{x,r_i}\cap X_{x',r_i})$. As long as $r_i$ is sufficiently small, Lemma \ref{l:mt-cones-and-pGHAs} \eqref{item:DGH-implies-xi} and \eqref{e:mt-apptan-distance} give the existence of a $3\epsilon^2 (r_i/\epsilon)$-pGHA $\psi_{\epsilon,r_i/\epsilon}:B_X(x,r_i/\epsilon)\cap X_{x,r_i}' \rightarrow B_{\|\cdot\|_x'}(\bz,r_i/\epsilon)$. For sufficiently small $r_i$, we set $R = 1/\epsilon, s_i = r_i/\epsilon$ and combine these mappings to get a mapping $f_\epsilon:B_{\|\cdot\|_x}(\bz,1)\rightarrow B_{\|\cdot\|_x'}(\bz,1)$ given by
	\begin{equation}
		f_\epsilon(u) = \frac{1}{s_i}\psi_{\epsilon,s_i}(\varphi_{\epsilon,s_i}(s_i u)).
	\end{equation}
	Observe that $\varphi_{\epsilon,s_i}(\bz) = x$ and $\psi_{\epsilon,s_i}(x) = \bz$ so that $f_\epsilon(\bz) = \bz$. Since $\psi_{\epsilon,s_i}$ and $\varphi_{\epsilon,s_i}$ are both $3\epsilon^2 s_i$-pGHAs, a straightforward argument using the triangle inequality shows that
	\begin{equation}
		\bigg|\|f_\epsilon(u)-f_\epsilon(v)\|_x' - \|u-v\|_x\bigg| \leq 6\epsilon^2 \hbox{\quad for all $u,v\in B_{\|\cdot\|_x}(\bz,1)$}.
	\end{equation}
	We will use these maps for decreasing $\epsilon$ to produce an isometry $f$ between $B_{\|\cdot\|_x}(\bz,1)$ and $B_{\|\cdot\|_x'}(\bz,1)$.
	
	Let $\epsilon_i \rightarrow 0$ and consider the sequence of maps $\{f_{\epsilon_i}\}$. Choose a countable dense set $N\subset B_{\|\cdot\|_x}(\bz,1)$. Via a standard diagonal argument, we can find a subsequence $i_k$ such that
	\begin{equation}
		\lim_{k}f_{\epsilon_{i_k}}(w) \hbox{\quad exists for all $w\in N$}.
	\end{equation}
	We define a map $f:B_{\|\cdot\|_x}(\bz,1)\rightarrow B_{\|\cdot\|_x'}(\bz,1)$ by
	\begin{equation}
		f(u) = \begin{cases}
			\lim_{k}f_{\epsilon_{i_k}}(u) &\hbox{ if $u\in N$},\\
			\lim_{n}f(u_n) &\hbox{ for any sequence $u_n\rightarrow u$ with $u_n\in N$ when $u\not\in N$}.
		\end{cases}
	\end{equation}
	Using the triangle inequality, it is standard to show that $f$ is well-defined and
	\begin{equation}\label{e:isom-embedding}
		\|f(u)-f(v)\|_x' = \|u-v\|_x \hbox{\quad for all $u,v\in B_{\|\cdot\|_x}(\bz,1)$}.
	\end{equation}
	
	We now show that $f$ is surjective on $\partial B_{\|\cdot\|_x}(\bz,r)$ for any $0 < r \leq 1$. First, note that $f(\bz) = \bz$ and \eqref{e:isom-embedding} imply $f$ maps $\partial B_{\|\cdot\|_x}(\bz,r)$ into $\partial B_{\|\cdot\|_x'}(\bz,r)$. Towards a proof by contradiction, suppose that $f:\partial B_{\|\cdot\|_x}(\bz,r)\rightarrow \partial B_{\|\cdot\|_x'}(\bz,r)$ is not surjective. This gives a homeomorphic embedding $g:\mathbb{S}^{n-1}\rightarrow\mathbb{S}^{n-1}\setminus \{p\}$ for some $p\in\mathbb{S}^{n-1}$. This leads to the contradiction $g(\mathbb{S}^{n-1}) = \mathbb{S}^{n-1}\setminus\{p\}$ since $g(\mathbb{S}^{n-1})$ is open and closed in the connected space $\mathbb{S}^{n-1}\setminus\{p\}$. This proves that $f$ is surjective, which shows it is an isometry between $B_{\|\cdot\|_x}(\bz,1)$ and $B_{\|\cdot\|_x'}(\bz,1)$.
\end{proof}

\subsection{Euclidean space-valued Sobolev mappings}
In this section, we prove our results on the almost everywhere existence of tangent $n$-planes.
\begin{theorem}\label{t:euc-flat-tangents}
	If $E\subset \R^d$ is $n$-rectifiable and has finite $n$-packing content, then for $\Haus^n$-a.e. $x\in E$, there exists an $n$-plane $V_x$ such that
	$$\Tan_\AW(E,x) = \{V_x\}.$$
\end{theorem}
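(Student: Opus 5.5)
The plan is to combine Lemma \ref{l:apptan-planes-exist} with part \eqref{item:euc-strong-apptan} of Proposition \ref{p:euc-apptan-upgrade}, after first checking that $E$ has finite $\Haus^n$ measure. Note that Proposition \ref{p:euc-apptan-upgrade} is stated for subsets of $\R^d$ with finite $s$-packing content and needs no closedness, while $\Tan_\AW$ was defined for closed sets; we will either work with $\overline{E}$ or interpret $\Tan_\AW(E,x)$ via the blow-ups of $\overline{E}$. Since the relative Walkup-Wets distance is unchanged when a set is replaced by its closure, these give the same tangents.

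\textbf{Step 1: finiteness of $\Haus^n(E)$.} By Remark \ref{rem:haus-pack}, $\Haus^n(E)\le P^n_\infty(E)<\infty$, because $E$ has finite $n$-packing content. So Lemma \ref{l:apptan-planes-exist} applies. If $\diam E$ is infinite or zero there is a small subtlety in the definition of packing content. In the main case ($E$ bounded) this is immediate.

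\textbf{Step 2: strong approximate tangent planes.} By Lemma \ref{l:apptan-planes-exist}, there is a set $G_1\subset E$ of full $\Haus^n$ measure with the following property. For each $x\in G_1$ there is an affine $n$-plane $V_x$ through $x$ such that $V_x-x$ is a strong approximate $n$-tangent of $E$ at $x$. The sets witnessing this are $E_{x,r}=f(B^n(y,r_0))$, which satisfy \eqref{e:euc-apptan-density} and the strong form of \eqref{e:euc-apptan-distance} for every sequence $r_i\to0$.

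\textbf{Step 3: upgrading.} Proposition \ref{p:euc-apptan-upgrade}\eqref{item:euc-strong-apptan}, applied with $s=n$, gives a set $G_2\subset E$ of full $\Haus^n$ measure with the following property. At every $x\in G_2$ where a strong approximate $n$-tangent $T_x$ exists, we have $\TanAW(E,x)=\{T_x\}$. For $x\in G_1\cap G_2$, which still has full measure, this yields $\TanAW(E,x)=\{V_x-x\}$, a linear $n$-plane. That is the claim, where ``$n$-plane $V_x$'' is read as the plane through the origin.

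\textbf{Main obstacle.} Most of the work sits in the cited results, so the one delicate point is measurability in the contradiction argument inside Proposition \ref{p:euc-apptan-upgrade}. The bad set $F$ on which uniqueness fails must be $\Haus^n$-measurable, or at least have positive outer measure and contain a measurable subset of positive measure. If needed, we handle this by replacing $F$ with a measurable hull inside $E$. Since $\Haus^n(E)<\infty$ and $\Haus^n$ is Borel regular, the expansivity argument can then run on a measurable subset of positive measure on which the snapshots $E_{x,r}$ are defined. Beyond this, the proof is a direct concatenation of Remark \ref{rem:haus-pack}, Lemma \ref{l:apptan-planes-exist}, and Proposition \ref{p:euc-apptan-upgrade}.
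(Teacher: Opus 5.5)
Your proposal is correct and is the paper's argument: Lemma~\ref{l:apptan-planes-exist} gives a strong approximate tangent $n$-plane at $\Haus^n$-a.e.\ point, and part (2) of Proposition~\ref{p:euc-apptan-upgrade} upgrades it to $\TanAW(E,x)=\{V_x\}$. Your Step 1, which uses Remark~\ref{rem:haus-pack} to get $\Haus^n(E)\le P^n_\infty(E)<\infty$ so that the lemma applies, makes explicit a hypothesis the paper uses without comment. Your measurable-hull aside would not work as stated: a hull of a non-measurable bad set contains a non-null set of good points. That issue, however, lies inside the cited proposition, not in this proof.
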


\begin{proof}
	Since $E$ is $n$-rectifiable, Lemma \ref{l:apptan-planes-exist} implies that $E$ has an approximate tangent $n$-plane $V_x$ that is a strong approximate tangent at $\Haus^n$-a.e. $x\in E$. Applying Proposition \ref{p:euc-apptan-upgrade}, we get the conclusion.
\end{proof}

We now conclude the subsection with some straightforward corollaries.

\begin{corollary}\label{cor:euc-sobolev-image-tangents}
	Let $n < p \leq \infty$ and  $k\in\N$. Suppose that $f_i\in N^{1,p}(\mathbb{B}^n,\R^d)$ are continuous for $i\in \{1,\dots, k\}$. If $A = \bigcup_{i=1}^kf_i(\mathbb{B}^n)$, then for $\mathcal{H}^n$-a.e. $x\in A$, there exists an $n$-plane $V_x$ such that
	$$\Tan_\AW(A,x) = \{V_x\}$$
\end{corollary}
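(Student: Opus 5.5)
The plan is to reduce the corollary to Theorem \ref{t:euc-flat-tangents} applied to the closed set $A$. We need two things: that $A$ is $n$-rectifiable, and that $A$ has finite $n$-packing content. Then Theorem \ref{t:euc-flat-tangents} gives, for $\Haus^n$-a.e. $x\in A$, an $n$-plane $V_x$ with $\Tan_\AW(A,x)=\{V_x\}$. Here $A$ is compact, hence closed, since each $f_i$ is continuous. If needed, we first extend each $f_i$ continuously to the closed ball, or replace $\mathbb{B}^n$ by its closure, which is harmless since the paper's $\mathbb{B}^n$ is already the closed ball.

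Rectifiability is immediate. By Theorem \ref{t:sob-low-reg} applied with $X=\R^d$, each $f_i(\mathbb{B}^n)$ is $n$-rectifiable, and a finite union of $n$-rectifiable sets is $n$-rectifiable: concatenate the countable families of Lipschitz pieces, and the exceptional null sets remain null.

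For the packing content, Theorem \ref{t:sob-low-reg} gives that each $A_i=f_i(\mathbb{B}^n)$ has finite $n$-packing content. We must show this passes to finite unions, and this is the only step requiring care, because the radius restriction in Definition \ref{def:goodpack} is relative to $\diam A$ rather than $\diam A_i$, and balls are centered in $A$. Let $\{B(x_j,r_j)\}$ be disjoint balls centered in $A$ with $r_j\le \diam A$. Sort them by an index $i$ with $x_j\in A_i$. For fixed $i$, the balls centered in $A_i$ are disjoint balls of $A_i$. Those with $r_j\le \diam A_i$ contribute at most $M_i$. Those with $r_j>\diam A_i$ (when $\diam A_i>0$) are disjoint and centered in $A_i$, and each such ball contains all of $A_i$ ($A_i\subset B(x_j,\diam A_i)$). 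Hence at most one such ball occurs, contributing at most $(\diam A)^n<\infty$. If $\diam A_i=0$, then $A_i$ is a point and at most one ball is centered there. Summing over $i\le k$ gives
\[\sum_j r_j^n\le \sum_{i=1}^k \bigl(M_i+(\diam A)^n\bigr)<\infty.\]

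Alternatively, one can mimic Remark \ref{rem:n-pack} directly. The measure $\nu=\sum_i \nu_i$, where $\nu_i$ is the lower $n$-regular measure built in the proof of Theorem \ref{t:sob-low-reg} for $f_i$, is finite and lower $n$-regular on $A$. The bounded-radius issue is handled the same way.

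With both hypotheses verified, Theorem \ref{t:euc-flat-tangents} yields the conclusion. The main obstacle is merely this bookkeeping of the radius normalization. Everything substantive is contained in Theorem \ref{t:sob-low-reg} and Theorem \ref{t:euc-flat-tangents}.
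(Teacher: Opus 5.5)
Your proposal is correct and follows the same route as the paper, which simply invokes Theorem \ref{t:sob-low-reg} to get that $A$ is $n$-rectifiable with finite $n$-packing content and then applies Theorem \ref{t:euc-flat-tangents}. Your check that finite $n$-packing content passes to finite unions (at most one disjoint ball of radius exceeding $\diam A_i$ can be centered in each $A_i$) correctly fills in a step the paper leaves implicit.
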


\begin{proof}
	By Theorem \ref{t:sob-low-reg}, $A$ is $n$-rectifiable and has finite $n$-packing content, so $A$  satisfies the hypotheses of Theorem \ref{t:euc-flat-tangents}.
\end{proof}

\begin{corollary}\label{cor:euc-rect-set-tangents}
	Let $E\subset \R^d$ be $n$-rectifiable with finite $\mathcal{H}^n$ measure. For any $\epsilon > 0$, there exists $\tilde{E}\subset E$ such that $\Haus^n(E\setminus\tilde{E}) < \epsilon$, and for $\mathcal{H}^n$-a.e. $x\in \tilde{E}$, there exists an $n$-plane $V_x$ such that
	$$\Tan_\AW(A,x) = \{V_x\}$$
\end{corollary}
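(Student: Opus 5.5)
The plan is to reduce to Theorem \ref{t:euc-flat-tangents} by finding a large subset $\tilde{E}\subset E$ which is itself $n$-rectifiable and has finite $n$-packing content. (The conclusion should be read with $\tilde{E}$ in place of $A$; the tangents are tangents to $\tilde{E}$, possibly taken after replacing $\tilde E$ by its closure.) Since $E$ is $n$-rectifiable with $\Haus^n(E)<\infty$, there are Lipschitz maps $f_i:E_i\subset[0,1]^n\to\R^d$ with $\Haus^n(E\setminus\bigcup_i f_i(E_i))=0$. By Kirszbraun's theorem (or a componentwise McShane extension, which only worsens the constant) each $f_i$ extends to a Lipschitz map on all of $[0,1]^n$, and we may rescale the domain to $\mathbb{B}^n$.

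Given $\epsilon>0$, we first choose $k$ so large that $\Haus^n\bigl(E\setminus\bigcup_{i=1}^k f_i(E_i)\bigr)<\epsilon$. This is possible by continuity from below of $\Haus^n$ restricted to $E$, which is a finite measure. We then set
\[
\tilde{E}=E\cap \bigcup_{i=1}^k f_i(\mathbb{B}^n).
\]
This set satisfies $\Haus^n(E\setminus\tilde{E})<\epsilon$. Moreover, $A:=\bigcup_{i=1}^k f_i(\mathbb{B}^n)$ has finite $n$-packing content by Remark \ref{l:pushfwd-lr} (or Theorem \ref{t:sob-low-reg} with $p=\infty$), since a finite union of sets with finite packing content has finite packing content. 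Packing content is monotone under passing to subsets, because balls centered in a subset are admissible for the larger set, up to the restriction $r\le\diam$, which is harmless. Hence $\tilde{E}$, and its closure, which still lies in the compact set $A$, have finite $n$-packing content. The set $\tilde E$ is $n$-rectifiable as a subset of $A$.

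Finally, we apply Theorem \ref{t:euc-flat-tangents} to $\overline{\tilde{E}}$, which is closed, as $\TanAW$ requires. This gives, for $\Haus^n$-a.e. $x\in\overline{\tilde E}$, an $n$-plane $V_x$ with $\TanAW(\overline{\tilde E},x)=\{V_x\}$, and in particular this holds for $\Haus^n$-a.e. $x\in\tilde E$. Here we use that $\Haus^n(\overline{\tilde E})\le\Haus^n(A)<\infty$.

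The only delicate points are bookkeeping. One is the radius restriction $r(B)\le\diam$ in the definition of packing content when passing to a subset, which changes the bound only by a constant factor, since small subsets carry only boundedly many large disjoint balls. The other is the need to work with a closed set. If one instead wants the statement for $A$ itself, it follows directly from Corollary \ref{cor:euc-sobolev-image-tangents}, since Lipschitz maps lie in $N^{1,\infty}$.
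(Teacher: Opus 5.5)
Your argument is correct and is essentially the paper's proof: cover all but $\epsilon$ of $E$ by finitely many (extended) Lipschitz images $f_i(\mathbb{B}^n)$, set $\tilde E = E\cap\bigcup_{i\le k} f_i(\mathbb{B}^n)$, note that $\tilde E$ is $n$-rectifiable with finite $n$-packing content, and apply Theorem \ref{t:euc-flat-tangents}. Reading $A$ as $\tilde E$ and passing to the closure is the right interpretation. One justification needs a small repair: balls disjoint in the subspace $\tilde E$ need not be disjoint in $A$, so they are not literally admissible for $A$. However, disjointness of the $B_{\tilde E}(y_j,r_j)$ forces $|y_j-y_k|>\max\{r_j,r_k\}$, so the halved balls $B_A(y_j,r_j/2)$ are disjoint in $A$. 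This gives monotonicity of finite $n$-packing content under passing to subsets, at the cost of a factor $2^n$.
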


\begin{proof}
	Let $\{f_i\}_{i=1}^N$ be a finite collection of Lipschitz maps $f_i:\mathbb{B}^n\rightarrow\R^d$ such that $\Haus^n(E\setminus\bigcup_{i=1}^N f_i(\mathbb{B}^n)) < \epsilon)$. Set $\tilde{E} = E\cap\bigcup_{i=1}^N f_i(\mathbb{B}^n)$. Since $\tilde{E}$ is a subset of a finite union of Lipschitz images, $\tilde{E}$ has finite $n$-packing content. Therefore, we can apply Theorem \ref{t:euc-flat-tangents}.
\end{proof}

\subsection{Metric space-valued Sobolev mappings}
In this subsection, we prove our results on the existence of tangent $n$-dimensional normed spaces.
\begin{theorem}\label{t:flat-tangents}
	If $X$ is a metric space that is $n$-rectifiable and has finite $n$-packing content, then for $\mathcal{H}^n$-a.e. $x\in X$, there exists a norm $\|\cdot\|_x$ on $\R^n$ such that
	\[ \Tan_\GH(X,x) = \{[(\R^n,\|\cdot\|_x,\bz)]\}.\]	
\end{theorem}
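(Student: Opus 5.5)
The proof will follow the Euclidean case, Theorem \ref{t:euc-flat-tangents}, with Lemma \ref{c:apptan-normed-spaces-exist} in place of Lemma \ref{l:apptan-planes-exist} and Proposition \ref{p:mt-apptan-upgrade} in place of Proposition \ref{p:euc-apptan-upgrade}.

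First, we check that $\Haus^n(X)<\infty$, which Lemma \ref{c:apptan-normed-spaces-exist} requires. By Remark \ref{rem:haus-pack}, $\Haus^n(X)\le P^n_\infty(X)<\infty$, since $X$ has finite $n$-packing content. Separability of $X$ is also needed for Remark \ref{rem:haus-pack}. It follows from finite packing content: for each $r>0$, any maximal $r$-separated set is countable, because disjoint balls of radius $r/2$ have $\sum r^n<M$, which forces finitely many points. If $\diam X$ is small relative to $r$, this case is trivial.

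Second, by Lemma \ref{c:apptan-normed-spaces-exist}, there is a set $G\subset X$ of full $\Haus^n$-measure with the following property. For every $x\in G$ there is a norm $\|\cdot\|_x$ on $\R^n$ such that $(\R^n,\|\cdot\|_x,\bz)$ is a strong approximate $n$-tangent to $X$ at $x$.

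Third, we apply Proposition \ref{p:mt-apptan-upgrade}\eqref{item:mt-unique-strong-apptan} with $s=n$, which uses only the finite $n$-packing content hypothesis. It gives a full-measure set $G'$ on which every strong approximate $n$-tangent $(Y,y)$ at $x$ satisfies $\TanGH(X,x)=\{[(Y,y)]\}$. For $x\in G\cap G'$, taking $(Y,y)=(\R^n,\|\cdot\|_x,\bz)$ yields $\TanGH(X,x)=\{[(\R^n,\|\cdot\|_x,\bz)]\}$. Since $G\cap G'$ has full $\Haus^n$-measure, this proves the theorem.

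The only step requiring care is the bookkeeping of hypotheses. Proposition \ref{p:mt-apptan-upgrade} internally uses Proposition \ref{p:expansive} together with the lower-density fact from \cite{BLZ23}, and both are available under finite packing content. The substantive work has already been done in the two cited results, so no new obstacle is expected beyond confirming that the strong approximate tangent produced in Lemma \ref{c:apptan-normed-spaces-exist} meets the definition used in Proposition \ref{p:mt-apptan-upgrade}. It does: density \eqref{e:mt-apptan-density} holds for all $R$ because \eqref{e:ax-dens} applies at every scale $Rr$, and \eqref{e:mt-apptan-distance} follows from Lemma \ref{l:mt-cones-and-pGHAs}.
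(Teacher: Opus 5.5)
Your proposal is correct and is essentially the paper's proof: it combines Lemma \ref{c:apptan-normed-spaces-exist}, which gives a strong approximate tangent normed $n$-space at a.e.\ point, with Proposition \ref{p:mt-apptan-upgrade}\eqref{item:mt-unique-strong-apptan}. Your extra checks are valid and are left implicit in the paper: finite $n$-packing content gives separability (use closed balls of radius slightly less than $r/2$ so they are genuinely disjoint) and gives $\Haus^n(X)<\infty$ via Remark \ref{rem:haus-pack}, both of which are needed to invoke the lemma.
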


\begin{proof}
	Since $X$ is $n$-rectifiable, Lemma \ref{c:apptan-normed-spaces-exist} implies that $X$ has an approximate tangent normed $n$-space $(\R^n,\|\cdot\|_x,\bz)$ that is a strong approximate tangent at $\Haus^n$-a.e. $x\in X$. Applying Proposition \ref{p:mt-apptan-upgrade}, we get the conclusion.
\end{proof}
The following corollaries are metric versions of Corollaries \ref{cor:euc-sobolev-image-tangents} and \ref{cor:euc-rect-set-tangents}.
\begin{corollary}\label{cor:met-sobolev-lip-image-tangents}
	Let $n < p \leq \infty$ and $k\in\N$. Suppose that $f_i\in N^{1,p}(\mathbb{B}^n,X)$ are continuous for $i\in\{1,\dots,k\}$. If $A = \bigcup_{i=1}^kf_i(\mathbb{B}^n)$, then for $\mathcal{H}^n$-a.e. $x\in X$, there exists a norm $\|\cdot\|_x$ on $\R^n$ such that
	\[ \Tan_\GH(A,x) = \{[(\R^n,\|\cdot\|_x,\bz)]\}.\]	
\end{corollary}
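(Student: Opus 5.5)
The plan is to mirror the proof of Corollary \ref{cor:euc-sobolev-image-tangents} and reduce to Theorem \ref{t:flat-tangents}. The claim should be read as holding for $\Haus^n$-a.e. $x\in A$; points of $X\setminus A$ do not enter. We need two facts about $A=\bigcup_{i=1}^k f_i(\mathbb{B}^n)$: that it is $n$-rectifiable, and that it has finite $n$-packing content. Theorem \ref{t:flat-tangents}, applied to the metric space $A$ with the metric inherited from $X$, then gives the conclusion directly. Tangents are computed intrinsically in $A$, so we do not need to compare with $X$.

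For rectifiability, Theorem \ref{t:sob-low-reg} shows that each $f_i(\mathbb{B}^n)$ is $n$-rectifiable. Each is covered, up to an $\Haus^n$-null set, by countably many Lipschitz images of subsets of $[0,1]^n$. Concatenating the $k$ countable families gives such a cover of $A$. Theorem \ref{t:sob-low-reg} also gives $\Haus^n(f_i(\mathbb{B}^n))<\infty$: finite packing content controls $\Haus^n$ by Remark \ref{rem:haus-pack}. Hence $\Haus^n(A)<\infty$.

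The main step is finite $n$-packing content of the finite union, since Definition \ref{def:goodpack} involves balls in the ambient set and radii bounded by $\diam A$. We use the proof of Theorem \ref{t:sob-low-reg} rather than only its statement. Each $f_i(\mathbb{B}^n)$ carries a finite measure $\nu_i$ on $X$ with $\nu_i(B_X(y,r))\geq c_i r^n$ for $y\in f_i(\mathbb{B}^n)$ and $0<r<\diam f_i(\mathbb{B}^n)$. For $p=\infty$ this is Remark \ref{l:pushfwd-lr}; for $p<\infty$ it is the measure built in the proof of Theorem \ref{t:sob-low-reg}.

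Set $\nu=\sum_i \nu_i+D^n\delta_{a}$, where $a\in A$ is a fixed point and $D=\max\{1,\diam A\}$. Let $y\in A$ and $r\leq \diam A$. Then $y\in f_i(\mathbb{B}^n)$ for some $i$, and there are two cases.
\begin{enumerate}
\item If $r<\diam f_i(\mathbb{B}^n)$, we get $\nu(B_X(y,r))\geq c_i r^n$.
\item Otherwise $B_X(y,r)\supset f_i(\mathbb{B}^n)$. If some other piece $f_j(\mathbb{B}^n)$ meets $B_X(y,r/2)$ at a point $y'$, apply the bound for $\nu_j$ at $y'$ with radius $r/2$; this works if $r/2<\diam f_j(\mathbb{B}^n)$, and otherwise repeat the argument. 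Since there are finitely many pieces, this terminates: either a lower bound $\gtrsim r^n$ appears, or $B_X(y,r)$ swallows pieces until the radius is comparable to $\diam A$.
\end{enumerate}

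The cleaner route, and the one we would actually take, avoids that case analysis. For balls of radius $r\geq \min_i \diam f_i(\mathbb{B}^n)=:\rho>0$, handle degenerate constant pieces separately, since they are $\Haus^n$-null and can be discarded. Any disjoint family of such balls centered in the bounded set $A$ is finite, with cardinality controlled. Indeed, by the packing bound for the small-ball part, the centers are $\rho$-separated points of a set covered by $k$ sets of finite packing content, so there are at most $\sum_i M_i\rho^{-n}$ of them. Their contribution is then at most that count times $(\diam A)^n$. Balls of radius $<\rho$ are handled piecewise: each such ball centered in $f_i(\mathbb{B}^n)$ is counted by the packing bound for $f_i(\mathbb{B}^n)$, with radius $<\diam f_i(\mathbb{B}^n)$. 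Summing over the at most $k$ pieces gives a finite total.

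So $P^n_\infty(A)<\infty$, and Theorem \ref{t:flat-tangents} finishes the proof. The main obstacle is this bookkeeping of the radius constraint in Definition \ref{def:goodpack} across pieces of different diameters; the rest is immediate.
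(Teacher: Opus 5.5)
Your proposal is correct and follows the paper's route: Theorem \ref{t:sob-low-reg} gives rectifiability and finite $n$-packing content of each piece $f_i(\mathbb{B}^n)$, and Theorem \ref{t:flat-tangents} is then applied to $A$, with the conclusion correctly read as holding for $\Haus^n$-a.e.\ $x\in A$. The paper's one-line proof simply asserts the finite-union packing step that you verify, and your counting argument for it is sound, though a shorter argument suffices: among disjoint balls centered in one piece $f_i(\mathbb{B}^n)$, at most one can have radius $\geq \diam f_i(\mathbb{B}^n)$ (it contains the whole piece), and this also handles constant pieces without discarding them.
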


\begin{proof}
	By Theorem \ref{t:sob-low-reg}, $A$ is $n$-rectifiable and has finite $n$-packing content, so $A$ satisfies the hypotheses of Theorem \ref{t:flat-tangents}.
\end{proof}

\begin{corollary}\label{cor:rect-set-tangents}
	Let $X$ be an $n$-rectifiable metric space with finite $\mathcal{H}^n$ measure. For any $\epsilon > 0$, there exists $\tilde{X}\subset X$ such that $\Haus^n(X\setminus\tilde{X}) < \epsilon$ and for $\mathcal{H}^n$-a.e. $x\in \tilde{X}$, there exists a norm $\|\cdot\|_x$ on $\R^n$ such that
	\[ \Tan_\GH(\tilde{X},x) = \{[(\R^n,\|\cdot\|_x,\bz)]\}.\]	
\end{corollary}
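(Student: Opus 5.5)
The plan is to follow the proof of Corollary \ref{cor:euc-rect-set-tangents}, with Theorem \ref{t:flat-tangents} playing the role of Theorem \ref{t:euc-flat-tangents}. Since $X$ is $n$-rectifiable, there are Lipschitz maps $f_i:E_i\subset[0,1]^n\to X$ with $\Haus^n(X\setminus\bigcup_i f_i(E_i))=0$. Because $\Haus^n(X)<\infty$, continuity of measure from below lets me choose $N$ with $\Haus^n\big(X\setminus\bigcup_{i=1}^N f_i(E_i)\big)<\epsilon$. I then set $\tilde{X}=X\cap\bigcup_{i=1}^N f_i(E_i)=\bigcup_{i=1}^N f_i(E_i)$. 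It is $n$-rectifiable by construction, and $\Haus^n(X\setminus\tilde{X})<\epsilon$.

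The main step is to show that $\tilde{X}$ has finite $n$-packing content. I would not extend $f_i$ to $\mathbb{B}^n$, since in a general metric target such an extension may not exist. Instead I use the argument of Remark \ref{l:pushfwd-lr} directly, but with a doubling trick.

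\begin{itemize}
\item[(a)] Let $L$ bound the Lipschitz constants of $f_1,\dots,f_N$.
\item[(b)] Take disjoint balls $B_X(y_j,r_j)$ centered on $\tilde{X}$, with $r_j\le\diam\tilde{X}$. Here the balls are taken in the subspace $\tilde{X}$, in line with Definition \ref{def:goodpack}.
\item[(c)] Pick a preimage $x_j\in E_{i(j)}$ of each center $y_j$.
\item[(d)] The sets $f_i(B^n(x_j,r_j/L)\cap E_i)$ lie in $B_X(y_j,r_j)$, so they are disjoint across $j$ for each fixed $i$. The trouble is that $\scL^n(B^n(x_j,r_j/L)\cap E_i)$ may be small, because $E_i$ need not be thick near $x_j$.
\item[(e)] To fix this, I restrict to the $j$ sharing the same $i$ and use that the balls $B^n(x_j,r_j/L)$ are themselves pairwise disjoint in $\R^n$. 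Indeed, $|x_j-x_k|\ge d(y_j,y_k)/L$, and disjointness of $B_X(y_j,r_j)$ and $B_X(y_k,r_k)$ with centers in $\tilde{X}$ forces $d(y_j,y_k)>\max(r_j,r_k)$.
\item[(f)] Hence $|x_j-x_k|>\max(r_j,r_k)/L$. So the balls $B^n(x_j,r_j/(2L))$ are disjoint and all lie in $B^n([0,1]^n,\diam\tilde{X})$.
\item[(g)] This gives $\sum_{j:\,i(j)=i} r_j^n\lesssim_n L^n(1+\diam\tilde{X})^n$. Summing over $i\le N$ bounds the packing sum uniformly, as Remark \ref{rem:haus-pack} requires.
\end{itemize}

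Finiteness of $\diam\tilde{X}$ follows because each $f_i(E_i)$ is a Lipschitz image of a bounded set.

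With $\tilde{X}$ $n$-rectifiable and of finite $n$-packing content, Theorem \ref{t:flat-tangents} applied to the metric space $\tilde{X}$ gives, for $\Haus^n$-a.e. $x\in\tilde{X}$, a norm $\|\cdot\|_x$ with $\Tan_\GH(\tilde{X},x)=\{[(\R^n,\|\cdot\|_x,\bz)]\}$. One technical point: Definition \ref{def:met-tan} asks for completeness. If needed, I would replace $\tilde{X}$ by its closure, or note that tangents of $\tilde{X}$ and of its completion agree. The closure is still contained in $X$ when $X$ is closed, and it still has the same packing bound, since the argument above only used the Lipschitz maps extended to $\overline{E_i}$. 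Extension to the closure of the domain is always possible for Lipschitz maps into a complete space.

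The step I expect to be the main obstacle is the packing bound. In particular, it is the handling of the subspace balls versus ambient balls, and the possibility that $E_i$ is not open. The separation argument in steps (e)--(f) sidesteps needing any Lebesgue density of $E_i$, so I expect it to go through.
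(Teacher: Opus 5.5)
Your proof is correct. Its skeleton matches the paper's: finitely many Lipschitz pieces carry all but $\epsilon$ of the measure, their union has finite $n$-packing content, and then Theorem \ref{t:flat-tangents} is applied to $\tilde X$. The difference is in the packing step.

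The paper regards $X$ as a subset of $\ell_\infty$ and takes the pieces to be Lipschitz maps $f_i:\mathbb{B}^n\to\ell_\infty$. Such extensions always exist because $\ell_\infty$ is injective (coordinatewise McShane extension). This is how the paper sidesteps your worry about extending into $X$. The cost is that $f_i(\mathbb{B}^n)$ may leave $X$, which is why the paper sets $\tilde X=X\cap\bigcup_i f_i(\mathbb{B}^n)$. Finite packing content then comes from the lower-regular push-forward measure of Remark \ref{l:pushfwd-lr}. It also uses the unstated fact that finite packing content passes to subsets, losing a factor $2^n$, since disjoint subspace balls $B_{\tilde X}(y_j,r_j)$ only give disjoint ambient balls of radius $r_j/2$.

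Your preimage-separation argument is the Lipschitz case of the proof of Lemma \ref{l:holder-packing-content}. It works for Lipschitz maps on arbitrary $E_i$, so it needs no embedding, no extension and no heredity step, and it handles subspace balls directly. One slip: in (e) the balls of radius $r_j/L$ need not be disjoint; only the half-radius balls $B^n(x_j,r_j/(2L))$ are, which is what you correctly use in (f). Also take $L\ge 1$.

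What the paper's route gives for free is measurability. Since $f_i(\mathbb{B}^n)$ is compact, $\tilde X$ is relatively closed in $X$, and $N$ is chosen by continuity of the finite measure $\Haus^n$ on $X$. For arbitrary $E_i$ the sets $f_i(E_i)$ need not be $\Haus^n$-measurable. In that case $\Haus^n(\bigcup_{i\le N}f_i(E_i))\to\Haus^n(X)$ does not yield $\Haus^n(X\setminus\bigcup_{i\le N}f_i(E_i))\to 0$.

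Fix this at the start by replacing $f_i(E_i)$ with $\bar f_i(\overline{E_i})\cap X$, where $\bar f_i$ is the extension of $f_i$ to $\overline{E_i}$ with values in the completion of $X$. This set is closed in $X$ and contains $f_i(E_i)$. It is still a Lipschitz image in $X$ of a subset of $[0,1]^n$, so your packing estimate applies verbatim.

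Your completeness remark is fine. Theorem \ref{t:flat-tangents} does not assume completeness, and in any case a space and its completion have the same metric tangents.
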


\begin{proof}
	Without loss of generality, we can assume that $X\subset V$, where $V$ is a Banach space, and that $\{f_i\}_{i=1}^N$, is a collection of Lipschitz maps $f_i:\mathbb{B}^n\rightarrow\ell_\infty$ such that $\Haus^n(X\setminus\bigcup_{i=1}^N f_i(\mathbb{B}^n)) < \epsilon$. Set $\tilde{X} = X\cap \bigcup_{i=1}^Nf_i(\mathbb{B}^n)$. Since $\tilde{X}$ is a subset of a finite union of Lipschitz images, it has finite $n$-packing content. Therefore, we can apply Theorem \ref{t:flat-tangents}.
\end{proof}

\section{Related Examples}

In this section, we illustrate the sharpness of our results. The first example is a closed $n$-rectifiable set which has approximate tangent planes almost everywhere, but no tangent $n$-planes.
\begin{example}\label{ex:approx-vs-true}
	Fix $n\in\N$ and for each $k\in\N$ define $\tilde{V}_k = 2^{1-k}\{0,\dots, 2^{k-1}\}^n \subset [0,1]^n$. Define also $V_1 = \tilde{V}_1$ and $V_k = \tilde{V}_k \setminus \tilde{V}_{k-1}$ for $k\geq 2$. Let
	\[ X := \left([0,1]^n\times \{0\}\right) \cup \bigcup_{k=1}^{\infty} \bigcup_{x\in V_k} \{x\}\times[-1/k,1/k] \subset \R^{n+1}.\]
	It is not hard to see that that $X$ is closed, $n$-rectifiable, and $\mathcal{H}^n(X) = \mathcal{H}^n([0,1]^n\times \{0\}) = 1$. However, for all $p \in (0,1)^n\times \{0\}$ (hence for $\mathcal{H}^n$-a.e. $p\in X$) we have that $\TanAW(X,p) = \{\R^{n+1}\}$.
\end{example}

The second example shows that in the conclusions of our results, tangents cannot be replaced by \emph{pseudo-tangents}. Recall that $T$ is a pseudo-tangent of $X\subset \R^n$ at $p\in X$ if there exists a sequence of points $(p_m)_{m\in\N}$ in $X$ and a sequence $(r_m)_{m\in\N}$ of positive numbers converging to zero such that $r_m^{-1}(X-x_m)$ converges to $T$ in the Attouch-Wets topology \cite[Definition 3.1]{BL15}.

\begin{example}\label{ex:pseudot}
	Let $n\in\N$, and let $\{\mathcal{C}_{i_1\cdots i_k} : k\in\N, i_1,\dots,i_k \in \{1,\dots,2^n\}\}$ be a collection of closed $n$-cubes in $[0,1]^n$ such that
	\begin{enumerate}
		\item if $w=i_1\cdots i_k \in \{1,\dots,2^n\}^k$ and if $i,j \in \{1,\dots,2^n\}$ are distinct, then
		\begin{align*}
			\mathcal{C}_{w i} \subset \mathcal{C}_{w}, \quad
			\mathcal{C}_{w i} \cap \partial\mathcal{C}_{w} = \emptyset, \quad\text{and}\quad
			\mathcal{C}_{w i} \cap \mathcal{C}_{w j} = \emptyset,
		\end{align*}
		\item
		\[ \lim_{k\to \infty} \sup_{w \in \{1,\dots,2^n\}^k} \diam{\mathcal{C}_{w}} =0,\]
		\item the Cantor set
		\[ E := \bigcap_{k=1}^{\infty} \bigcup_{w \in \{1,\dots,2^n\}^k} \mathcal{C}_{w} \]
		has positive Lebesgue $n$-measure.
	\end{enumerate}
	
	For each $k\in\N$ and $w= i_1\cdots i_k \in \{1,\dots,2^n\}^k$ fix $p_{w} \in [0,1]^n$ and $r_{w} \in (0,2^{-k})$ such that
	\[ B^n(p_{w}, 2r_{w}) \subset \mathcal{C}_{w} \setminus \bigcup_{i\in\{1,\dots,2^n\}} \mathcal{C}_{wi}. \]
	Note that if $w,w'$ are distinct, then $B^n(p_w,r_w)\cap B^n(p_{w'},r_{w'}) = \emptyset$.
	
	Define now the function $f:[0,1]^n \to \R$ by
	\[ f(x) = \sum_{k\in\N} \sum_{w \in \{1\,\dots,2^n\}^k} \max\left\{0,r_w - |x-p_w|\right\}. \]
	It is not hard to see that $f$ is a Lipschitz function, which means that the graph $G = \{(x,f(x)) : x\in [0,1]^n\}$ of $f$ in $\R^{n+1}$ admits a bi-Lipschitz parameterization
	\[ F : [0,1]^n \to G, \qquad F(x) = (x,f(x)).\]
	Therefore, $F(E)$ has positive $\Haus^n$-measure. We claim that for all $x \in F(E)$, there exists a pseudo-tangent $T$ of $G$ at $x$ which is not an $n$-plane.
	
	Towards this end, fix $x\in E$ and let $(w_k)_{k\in\N}$ be a sequence of words such that $w_k \in \{1,\dots,2^n\}^k$ for each $k\in\N$ and
	\[ \{x\} = \bigcap_{k=1}^{\infty} \mathcal{C}_{w_k}.\]
	Let also $x_k = (p_{w_k},r_k) \in G$. By property (2) in the definition of sets $\mathcal{C}_w$,
	\[ |x_k - (x,0)| \leq r_{w_k} + \diam{\mathcal{C}_{w_k}} \xrightarrow{k\to \infty} 0.\]
	There exist positive integers $k_1<k_2<\cdots$ such that $r_{k_j}^{-1}(G-x_j) \to T$ for some closed set $T \in \mathfrak{C}(\R^{n+1})$ in the Attouch-Wets topology. To complete the proof of the claim, note that for all $j\in\N$, the cone
	\[ \{(x,1-|x|) : x \in B^n(\textbf{0},1)\} \subset B^n(\textbf{0},1)\times[0,1]\]
	is contained in $r_{k_j}^{-1}(G-x_j)$, so it is also contained in $T$. Therefore, $T$ is not isometric to an $n$-plane.
\end{example}

We now give an example showing that in Corollary \ref{cor:euc-rect-set-tangents}, one cannot take $\tilde{E} \subset E$ such that $\mathcal{H}^n(E\setminus \tilde{E}) =0$.

\begin{example}\label{ex:rect-no-tan-planes}
	Let $1 \leq n < d$, $D = [0,1]^n\times\{0\}^{d-n}\subset\R^d$, and let let $\cW$ be the standard dyadic Whitney decomposition of $[0,1]^d$ with respect to $D$. Let $X = \{x_i\}_i$ be an enumeration of the centers of the cubes in $\cW$. Let $r_i>0$ be a sequence of radii such that $\sum_i r_i^n < \infty$ and let $B^n(x_i,r_i)$ denote the closed $n$-disk in the first $n$-coordinates of radius $r_i$ centered at $x_i$. Define $E = D \cup \bigcup_{i=1}^\infty B^n(x_i,r_i)$. Then $\Haus^n(E) < \infty$, $E$ is compact, $n$-rectifiable, and yet 
	if
	$\tilde{E}\subset E$ with $\Haus^n(E\setminus \tilde{E}) = 0$, then $\tilde{E}$ has no $n$-plane tangents at any point of $D\cap \tilde{E}$.
\end{example}

Finally, we give an example showing that the value of $p$ in Corollary \ref{cor:euc-sobolev-image-tangents} is sharp when $n \geq 2$ by constructing a function $F\in W^{1,n}([0,1]^n,\R^{n+1})$ satisfying Lusin's condition N whose image has no $n$-plane tangents on a fat cantor set $K$ in $[0,1]^n\times\{0\}\subset\R^{n+1}$.

\begin{example}\label{ex:psharp}
	Let $(\lambda_i)_{i\in\N}$ be a sequence in $(0,1)$ such that $\prod_{i=1}^\infty(1-\lambda_i) > 0$. We let $K_0$ be the union of the $2^{n}$ corner cubes inside $[0,1]^n$ of side length $\frac{1-\lambda_0}{2}$. Given $K_j$ for any $j\geq 0$, we define $K_{j+1}$ to be the union of the $2^n$ corner cubes of side length $\frac{1-\lambda_{j+1}}{2}\ell(Q)$ where $Q$ is any $j$-th order corner cube in the definition of $K_j$. Let $K = \bigcap_{j=1}^\infty K_j$ and observe that $\scL^n(K) > 0$.
	
	We now construct a function $f\in W^{1,n}([0,1]^n,\R)$ and define our desired function $F$ as its graph by setting $F(x) = (x,f(x))$. The idea to construct $F$ is to carefully ``poke out'' small balls in the complement of $K$ into the $(n+1)$-th coordinate direction to height $\ell(Q)$. For each corner cube $Q$ in $K_j$ in the above construction, let $B_Q = B^n(x_Q,\frac{\lambda_{j+1}}{2}\ell(Q))$ where $x_Q$ is the center of $Q$ so that $B_Q\subset[0,1]^n\setminus K_{j+1}$ and $B_Q\cap B_{Q'} =\varnothing$ for $Q\not=Q'$. Let $\scQ$ be the collection of all corner cubes in the construction and let $\{\alpha_Q\}_{Q\in\scQ}$ be a sequence of numbers in $(0,\frac12)$ so that, if $Q$ is a $k$-th generation cube, then
	\begin{equation}\label{eq:aq}
		|\log(\alpha_Q)| \geq \left( 2^{nk} k^2 \ell(Q)^n\right)^{\frac1{n-1}}.
	\end{equation}
	Define $f:[0,1]^n \to \R$ by
	\begin{equation*}
		f(x) = \begin{cases}
			0 & x\in [0,1]^n \setminus \bigcup_{Q\in\scQ}B_Q,\\
			\ell(Q)\displaystyle\frac{\log\left(|x-x_Q|r(B_Q)^{-1}\right)}{\log\left(\alpha_Q\right)} & x\in B_{Q}\setminus\alpha_QB_Q,\ Q\in\scQ,\\
			\ell(Q) & \ x\in\alpha_QB_Q,\ Q\in\scQ.
		\end{cases}
	\end{equation*}
	We note that $f$ is continuous, $\scL^n$-a.e. differentiable, and for $\scL^n$-a.e. $x\in[0,1]^n$,
	\begin{equation*}
		|\nabla f(x)| = \begin{cases}
			0 & x\in [0,1]^n \setminus \left(\bigcup_{Q\in\scQ}B_Q\setminus\alpha_QB_Q\right),\\
			\displaystyle\frac{\ell(Q)}{|\log\left(\alpha_Q\right)||x-x_Q|} & x\in B_{Q}\setminus\alpha_QB_Q,\ Q\in\scQ.
		\end{cases}
	\end{equation*}
	This gives
	\begin{align*}
		\int_{[0,1]^n}|\nabla f|^n = \sum_{Q\in\scQ}\int_{B_Q\setminus\alpha_QB_Q}|\nabla f|^n &\simeq_n \sum_{Q\in\scQ}\frac{\ell(Q)^n}{|\log(\alpha_Q)|^n}\int_{\alpha_Qr(B_Q)}^{r(B_Q)}\frac{1}{r}dr\\
		&=\sum_{Q\in\scQ}\frac{\ell(Q)^n}{|\log(\alpha_Q)|^{n-1}}< \infty,
	\end{align*}
	where the final inequality follows from \eqref{eq:aq}. It follows that the function $F(x) = (x,f(x))$ satisfies $F\in W^{1,n}([0,1]^n,\R^{n+1})$ and that $F$ is a homeomorphism from $[0,1]^n$ onto its image. We know that the image of $F$ does not have $n$-plane tangents at any point of $K$, since for every $x\in K$ and $0 < r < 1$, $B^{n+1}(x,r)\cap F([0,1]^n)$ contains a large subset of $[0,1]^n\times\{0\}$ as well as a spike of height $\ell(Q)$ for some $Q$ with $\ell(Q)\simeq r$.
\end{example}

\bibliographystyle{alpha}
\bibliography{bib-file-RST}

\end{document}